\newtheorem{theorem}{Theorem}[section]
\newtheorem{definition}{Definition}[section] 
\newtheorem{exam}{Example}[section]
\newtheorem{rem}{Remark}[section]
\newtheorem{quest}{Question}[section]
\newtheorem{prop}{Proposition}[section]
\newtheorem{lem}{Lemma}[section]
\newtheorem{cor}{Corollary}[section]
\DeclareMathOperator{\convo}{\xrightarrow[]{o}}
\DeclareMathOperator{\convuo}{\xrightarrow[]{uo}}
\DeclareMathOperator{\convw}{\xrightarrow[]{w}}
\DeclareMathOperator{\convnX}{\xrightarrow[]{\|\cdot\|_X}}
\DeclareMathOperator{\convnY}{\xrightarrow[]{\|\cdot\|_Y}}
\DeclareMathOperator{\convtau}{\xrightarrow[]{\tau}}
\DeclareMathOperator{\convvars}{\xrightarrow[]{\varsigma}}
\DeclareMathOperator{\convvarsh}{\xrightarrow[]{\hat{\varsigma}}}
\begin{document}

\title{{\bf $o\tau$-Continuous, Lebesgue, KB, and Levi operators between vector lattices and topological vector spaces}}
\maketitle\author{\centering{{Safak Alpay, Eduard Emelyanov, Svetlana Gorokhova\\ }
\abstract{We investigate $o\tau$-continuous/bounded/compact and Lebesgue operators from vector lattices to topological vector spaces;
the Kantorovich-Banach operators between locally solid lattices and topological vector spaces; and the Levi operators from locally solid lattices to vector lattices. 
The main idea of operator versions of notions related to vector lattices lies in redistributing  topological and order properties of 
a topological vector lattice between the domain and range of an operator under investigation. Domination properties for these classes of operators are
studied. 
}

{\bf{keywords:}} {\rm topological vector space, locally solid lattice, Banach lattice, order convergence, domination property, adjoint operator}

{\bf MSC2020:} {\rm 46A40, 46B42, 47L05}
\large

\section{Introduction and preliminaries}

In the present paper, all vector spaces are supposed to be real, operators linear, vector topologies Hausdorff, and vector lattices Archimedean. 

For any vector lattice $X$, the Dedekind complete vector lattice of all order bounded linear functionals on $X$ is called {\em order dual} of $X$ and is
denoted by $X^\sim$. Recall that the order continuous part  $X^\sim_n$  is a band of $X^\sim$. Some vector lattices may have trivial order duals, for example 
$X^\sim=\{0\}$ whenever $X=L_p[0,1]$ with $0<p<1$ (cf. \cite[Thm.5.24]{AB1}). 

If $T$ is an operator from a vector space $X$ to a vector space $Y$, 
the {\em algebraic adjoint} $T^{\#}$ is an operator from the algebraic dual $Y^{\#}$ to $X^{\#}$, defined by $(T^{\#}f)(x):=f(Tx)$ for all $x\in X$ and all $f\in Y^{\#}$. 
In the case when $X$ and $Y$ are vector lattices and $T$ is order bounded, the restriction $T^\sim$ of $T^{\#}$ to the order dual $Y^\sim$ of $Y$ is called the 
{\em order adjoint} of $T$. The operator  $T^\sim: Y^\sim \to X^\sim$ is not only order bounded, but even order continuous 
(cf. \cite[Thm.1.73]{AB2}). Clearly, $T^\sim: Y_n^\sim \to X_n^\sim$ when $T$ is order continuous. 

In the case when  $(X,\varsigma)$ and $(Y,\tau)$ are topological vector spaces and $T$ is continuous, the restriction $T'$ of $T^{\#}$ to the 
{\em topological dual} $Y'$ (=  the collection of all $\varsigma$-continuous linear  functionals on $Y$) is called the {\em topological adjoint} of $T$.
For every locally solid lattice $(X,\varsigma)$, we have $X'\subseteq X^\sim$ since $\varsigma$-continuous functionals are bounded on 
$\varsigma$-bounded subsets and hence are order bounded. The topological dual $X'$ of a locally convex-solid lattice $(X,\varsigma)$ is 
an ideal of $X^\sim$ (and hence $X'$ is Dedekind complete) (cf. \cite[Thm.3.49]{AB2}). Every Fr{\'e}chet lattice $(X,\varsigma)$ 
satisfies $X'=X^\sim$ (cf. \cite[Thm.5.23]{AB1}). 

A net $(x_\alpha)_{\alpha\in A}$ in a vector lattice $X$ is said to be:
\begin{enumerate}
\item[$a)$] \  
{\em order convergent} ({\em $o$-convergent}) to $x\in X$, if there exists a net $(z_\beta)_{\beta\in B}$ in $X$ such that $z_\beta\downarrow 0$ and, 
for any $\beta\in B$, there exists $\alpha_\beta\in A$ with $|x_\alpha-x|\leq z_\beta$ for all $\alpha\geq\alpha_\beta$. In this case, 
we write $x_\alpha\convo x$$;$
\item[$b)$] \ 
{\em $uo$-convergent} to $x\in X$, if $|x_\alpha-x|\wedge u\convo 0$ for every $u\in X_+$.
\end{enumerate}

A {\em locally solid lattice} $(X,\varsigma)$ is called: 
\begin{enumerate}
\item[$c)$] \  
{\em Lebesgue}/$\sigma$-{\em Lebesgue} if $x_\alpha\downarrow 0$ implies $x_\alpha\convvars 0$ for every net/sequence $x_\alpha$ in $X$.
\item[$d)$] \  
{\em pre-Lebesgue} if $0\le x_n\uparrow\le x$ in $X$ implies that $x_n$ is a $\varsigma$-Cauchy sequence in $X$.
\item[$e)$] \  
{\em Levi}/$\sigma$-{\em Levi} \ if every increasing $\varsigma$-bounded net/sequen\-ce in $X_+$ has a supremum in $X$.
\item[$f)$] \  
{\em Fatou}/$\sigma$-{\em Fatou} if the topology $\varsigma$ has a base at zero consisting of order/$\sigma$-order closed solid sets.
\end{enumerate}
The assumption $x_\alpha\downarrow 0$ on the net $x_\alpha$ in $c)$ can be replaced by $x_\alpha\convo 0$.
A Lebesgue lattice $(X,\varsigma)$ is Dedekind complete iff order intervals of $X$ are $\varsigma$-complete \cite[Prop.3.16]{Tay1}.
Every Lebesgue lattice is pre-Lebesgue \cite[Thm.3.23]{AB1} and Fatou \cite[Lem.4.2]{AB1}. Furthermore, $(X,\varsigma)$ is pre-Lebesgue iff 
the topological completion $(\hat{X},\hat{\varsigma})$ of $(X,\varsigma)$ is Lebesgue \cite[Thm.3.26]{AB1} iff $(\hat{X},\hat{\varsigma})$ is 
pre-Lebesgue and Fatou \cite[Thm.4.8]{AB1}. By \cite[Thm.3.22]{AB1}, $d)$ is equivalent to each of the following two conditions:
\begin{enumerate}
\item[$d')$] \  
if $0\le x_\alpha\uparrow\le x$ holds in $X$, then $x_\alpha$ is a $\varsigma$-Cauchy net;
\item[$d'')$] \  
every order bounded disjoint sequence in $X$ is $\varsigma$-conver\-gent to zero.
\end{enumerate}
The next well known fact follows directly from $d')$.

\begin{prop}\label{tau compl pre Leb is Ded compl}
Every $\varsigma$-complete pre-Lebesgue lattice $(X,\varsigma)$ is Dedekind complete.  
\end{prop}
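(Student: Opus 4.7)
My plan is to verify Dedekind completeness via the standard reduction to positive, order bounded, increasing nets. First I would observe that an arbitrary increasing net $z_\beta\uparrow\le z$ in $X$ can be shifted by a fixed member $z_{\beta_0}$ to yield $x_\beta:=z_\beta-z_{\beta_0}$ satisfying $0\le x_\beta\uparrow\le z-z_{\beta_0}$, and that $\sup_\beta z_\beta$ exists in $X$ iff $\sup_\beta x_\beta$ does. So it is enough to produce a supremum for every net of the form $0\le x_\alpha\uparrow\le x$.

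Given such a net, I would invoke the pre-Lebesgue hypothesis in the equivalent form $d')$ to conclude that $(x_\alpha)$ is $\varsigma$-Cauchy; the $\varsigma$-completeness of $X$ then supplies a point $y\in X$ with $x_\alpha\convvars y$. The remaining task is to identify $y$ with $\sup_\alpha x_\alpha$, and for this I would use the standard fact that the positive cone $X_+$ is $\varsigma$-closed in every Hausdorff locally solid lattice. Fixing $\alpha$, the tail $(x_\beta-x_\alpha)_{\beta\ge\alpha}\subseteq X_+$ $\varsigma$-converges to $y-x_\alpha$, whence $y\ge x_\alpha$ for every $\alpha$; and for any upper bound $w$ of $(x_\alpha)$, the net $(w-x_\alpha)_\alpha\subseteq X_+$ $\varsigma$-converges to $w-y$, giving $w\ge y$.

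The argument is essentially bookkeeping and I do not anticipate a genuine obstacle; the only sensitive point is the closedness of $X_+$, which relies on the blanket Hausdorffness of vector topologies fixed in the introduction -- without it the $\varsigma$-limit $y$ would not even be unique and the identification of $y$ as a supremum would break down.
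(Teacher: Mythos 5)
Your proof is correct and follows the same route the paper intends: the paper offers no written argument beyond the remark that the proposition ``follows directly from $d')$'', and your reduction to nets $0\le x_\alpha\uparrow\le x$, the application of $d')$ plus $\varsigma$-completeness to obtain a $\varsigma$-limit $y$, and the identification of $y$ with $\sup_\alpha x_\alpha$ via the $\varsigma$-closedness of $X_+$ (valid since the topology is Hausdorff and locally solid, cf.\ \cite[Thm.2.21]{AB1}) is exactly the standard filling-in of that one-line justification.
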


A normed lattice $(X,\|\cdot\|)$ is called {\em Kantorovich-Banach} or {\em $KB$-space} if every norm bounded upward directed 
set in $X_+$ converges in the norm. Each $KB$-space is a Levi lattice with order continuous complete norm; each order continuous Levi normed lattice is Fatou; 
and each Levi normed lattice is Dedekind complete. Lattice-normed versions of $KB$-spaces were recently studied in \cite{AEEM1,AEEM2,AGG}. 

\begin{enumerate}
\item[$g)$] \  
We call a locally solid lattice $(X,\varsigma)$ by a {\em $KB$/$\sigma$-$KB$ lattice} if every increasing $\varsigma$-bounded net/sequ\-ence in $X_+$ is $\varsigma$-convergent.
\end{enumerate}
Clearly, each $KB$/$\sigma$-$KB$ lattice is Levi/$\sigma$-Levi and each Levi/$\sigma$-Levi lattice is Dedekind complete/$\sigma$-complete.

Recall that a continuous operator $T$:
\begin{enumerate}
\item[$h)$] \
between two Banach spaces is said to be {\em Dunford-Pettis} if $T$ takes weakly null sequences to norm null sequences. 
It is well known that every weakly compact operator on $L_1(\mu)$ is Dunford-Pettis and that an operator is Dunford-Pettis iff it takes 
weakly Cauchy sequences to norm convergent sequences \cite[Thm.5.79]{AB2}. 
\item[$i)$] \
from a Banach lattice $X$ to a Banach space $Y$ is called {\em $M$-weakly compact} if
$\|Tx_n\|_Y\to 0$ holds for every norm bounded disjoint sequence $x_n$ in $X$.
\item[$j)$] \
from a Banach space $Y$ to a Banach lattice $X$ is called {\em $L$-weakly compact} 
whenever $\|x_n\|_X\to 0$ holds for every disjoint sequence $x_n$ in the solid hull of $T(U_Y)$, 
where $U_Y$ is the closed unit ball of $Y$.
\end{enumerate}
An operator $T$ from a vector lattice $X$ to a topological vector space $(Y,\tau)$ is called
\begin{enumerate}
\item[$k)$] \
{\em $o\tau$-continuous}/{\em $\sigma{}o\tau$-continuous} if $Tx_\alpha\convtau 0$ for every net/sequence $x_\alpha$ such that $x_\alpha\convo 0$ \cite{JAM}.
Replacement of $o$-null nets/sequences by $uo$-null ones above gives the definitions of {\em $uo\tau$-continu\-ous}/{\em $\sigma{}uo\tau$-continu\-ous} operators.
\end{enumerate}
$L$-/$M$-weakly compact operators are weakly compact (cf. \cite[Thm.5.61]{AB2}) and the norm limit of a sequence of $L$-/$M$-weakly 
compact operators is again $L$-/$M$-weakly compact (cf. \cite[Thm.5.65]{AB2}).
For further unexplained terminology and notions, we refer to \cite{AB1,AB2,AAT1,AAT2,GTX,MN,Wick,Za}.

Various versions of Banach lattice properties like a property to be a $KB$-space were investigated recently
(see, e.g., \cite{AP,AEGp,AG,AM,AEEM1,AEEM2,BA,DEM1,DEM1a,EEG,EGK,EM0,EM1,EM2,EGZ,EGOU,GTX,JAM,Tay1,Tay2,TA}). 
In the present paper we continue the study of operator versions of several topological/order properties, focusing on locally solid lattices. 
The main idea behind operator versions consists in a redistribution of topological and order properties of a topological vector lattice
between the domain and range of the operator under investigation (like in the case of Dunford-Pettis and $L$-/$M$-weakly compact operators). 
As the order convergence is not topological in general \cite{DEM1,Go}, the most important operator versions 
emerge when both $o$- and $\varsigma$-convergences are involved simultaneously. 

\begin{definition}\label{order-to-topology} {\em
Let $T$ be an operator from a vector lattice $X$ to a topological vector space $(Y,\tau)$. We say that:
\begin{enumerate}
\item[$(a)$] \  
$T$ is {\em $\tau$-Lebes\-gue}/{\em $\sigma\tau$-Lebesgue} if $Tx_\alpha\convtau 0$ for every net/ sequence $x_\alpha$ such that $x_\alpha\downarrow 0$;
$T$ is {\em quasi $\tau$-Lebes\-gue}/ {\em $\sigma\tau$-Lebesgue} if $Tx_\alpha$ is $\tau$-Cauchy for every net/sequence
$x_\alpha$ in $X_+$ satisfying $x_\alpha\uparrow\le x\in X$. If there is no confusion with the choice of the topology $\tau$ on $Y$, 
we call $\tau$-Lebesgue operators by {\em Lebesgue} etc.
\item[$(b)$] \  
$T$ is {\em $o\tau$-bounded}/{\em $o\tau$-compact} if $T[0,x]$ is a $\tau$-bounded/ $\tau$-totally bounded subset of $Y$ for each $x\in X_+$.
\end{enumerate}
If additionally $X=(X,\varsigma)$ is a locally solid lattice,
\begin{enumerate}
\item[$(c)$] \  
$T$ is {\em $KB$}/{\em $\sigma$-$KB$} if, for every $\varsigma$-bounded increasing net/sequence $x_\alpha$ in $X_+$,
there exists (not necessarily unique) $x\in X$ such that $Tx_\alpha\convtau Tx$.
\item[$(d)$] \  
$T$ is {\em quasi $KB$}/{\em quasi $\sigma$-$KB$} if $T$ takes $\varsigma$-bounded increasing nets/sequences in $X_+$ to $\tau$-Cauchy nets.
\end{enumerate}
If $X$ and $Y$ are vector lattices with $(X,\varsigma)$ locally solid,
\begin{enumerate}
\item[$(e)$] \  
$T$ is {\em Levi}/{\em $\sigma$-Levi} if, for every $\varsigma$-bounded increasing net/sequence $x_\alpha$ in $X_+$,
there exists (not necessarily unique) $x\in X$ such that $Tx_\alpha\convo Tx$.
\item[$(f)$] \  
$T$ is {\em quasi Levi}/{\em quasi $\sigma$-Levi} if $T$ takes $\varsigma$-bounded increasing nets/sequences in $X_+$ to $o$-Cauchy nets.
\end{enumerate}
Replacement of decreasing $o$-null nets/sequences by $uo$-null ones in $(a)$ and $o$-convergent ($o$-Cau\-chy) nets/sequen\-ces by ($uo$-Cau\-chy) 
$uo$-conver\-gent ones in $(e)$ and $(f)$ above gives the definitions of  {\em $uo\tau$-continu\-ous} and of ({\em quasi}) {\em $uo$-Levi operators} respectively. 
}
\end{definition}

In our approach, we focus on:
\begin{enumerate}
\item[$*$] \  
modification of nets/sets of operators domains in $(a)$, $(b)$, $(d)$, $(e)$, and $(f)$;
\item[$**$] \  
information which operators provide about convergences in their domains/ranges in $(c)$, $(e)$, and $(f)$. 
\end{enumerate}

\begin{rem}\label{c_00} 
{\em
\begin{enumerate}
\item[$a)$] \  
The identity operator in a locally solid lattice $(Y,\tau)$ is Lebesgue/$KB$/Levi iff $(Y,\tau)$ is Lebes\-gue/$KB$/Levi.
This motivates the terminology. Cle\-ar\-ly, every $o\tau$-continuous/$\sigma{}o\tau$-continuous operator is 
$\tau$-Lebesgue/$\sigma\tau$-Lebesgue. By Lemma \ref{PC1}, any positive operator to a locally solid lattice
is Lebesgue/$\sigma$-Lebesgue iff $T$ is $o\tau$-continuous/$\sigma{}o\tau$-continuous; and every positive 
Lebesgue operator to a locally solid lattice is quasi Lebesgue by Theorem \ref{Thm.3.23 from AB2}. 
\item[$b)$] \
Each regular operator from a vector lattice $X$ to a locally solid lattice $(Y,\tau)$ is $o\tau$-bounded by Proposition \ref{regular are otau-bounded}.
In the case of normed range space $Y$, $o\tau$-bounded operators are also known as {\em interval-bounded} (cf. \cite[Def.3.4.1]{MN}).
Like in \cite[Lem.3.4.2]{MN}, each $o\tau$-bounded operator $T$ from a vector lattice $X$ to a topological vector space $(Y,\tau)$ possesses {\em adjoint}
$T^{\circ} : Y' \to X^\sim$ given by $T^{\circ}y' = y'\circ T$ for all $y' \in Y'$.
\item[$c)$] \
In the case when $X$ is also a topological vector lattice, the $\tau$-continuity of operator $T$ is not assumed in $(b)$ of Definition \ref{order-to-topology}. For example,
the rank one discontinuous operator $Tx:=(\sum_{k=1}^{\infty}x_k)e_1$ in $(c_{00},\|\cdot\|)$ is $o\tau$-compact and $o\tau$-continuous yet not compact.
Each continuous operator $T$ from a discrete Dedekind complete locally convex Lebesgue lattice  to a topological vector space is $o\tau$-compact by the Kawai theorem (cf. \cite[Cor.6.57)]{AB1}.
Every Dunford-Pettis operator from a Banach lattice to a Banach space is $o$-weakly-compact (cf.  \cite[Thm 5.91]{AB2}).
\item[$d)$] \  
Each $KB$-operator is quasi $KB$; each quasi ($\sigma$-) $KB$-opera\-tor is quasi ($\sigma$-) Lebesgue;
and each continuous operator from a $KB$-space to a topological vector space is $KB$. 
It is well known that the identity operator $I$ in a Banach lattice is $KB$ iff $I$ is $\sigma${-}$KB$ iff $I$ is quasi $KB$. 
Proposition \ref{quasi-KB-vs-sigma-quasi-KB} shows that the notions of qua\-si $KB$ and qua\-si $\sigma$-$KB$ operator coincide.
The most important reason for using the term {\em $KB$-operator} for $(c)$ of Definition \ref{order-to-topology} is existence of limits of $\varsigma$-bounded increasing nets.
Some authors (see, e.g., \cite{AM,BA,TA}) use the term ``$KB$-operator" for $(d)$ of Definition \ref{order-to-topology} which is slightly confusing
because it says nothing about existence of limits of topologically bounded increasing nets and all continuous finite-rank operators in every Banach lattice
satisfy $(d)$. Each order bounded operator from a Banach lattice to a $KB$-space is quasi $KB$. However, if we take $X=(c_{00},\|\cdot\|_l)$ and $Y=(c_{00},\|\cdot\|_p)$ 
with $l,p\in [1,\infty]$, the identity operator $I$ is quasi $KB$ iff $l\le p<\infty$. 
\item[$e)$] \
It is clear that every compact operator $T$ from a Banach lattice $X$ to a Banach space $Y$ is $o\tau$-compact. 
However, compact operator need not to be Lebesgue (cf. Example \ref{c_w(R)}). In particular, $o\tau$-compact operators are not necessarily  $o\tau$-continuous.
\end{enumerate}}
\end{rem}

\begin{exam}\label{c_0(R)} 
Let $(c(\mathbb{R}),\|\cdot\|_\infty)$ be the Banach lattice of all $\mathbb{R}$-valued functions on $\mathbb{R}$ 
such that for every $f\in c(\mathbb{R})$ there exists $a_f\in\mathbb{R}$ for which the set $\{r\in\mathbb{R}: |f(r)-a_f|\ge\varepsilon\}$ 
is finite for each $\varepsilon>0$. Then the identity operator $I$ in $c(\mathbb{R})$ is $\sigma{}o\tau$-continuous and quasi $\sigma$-Lebesgue yet neither Lebesgue nor quasi Lebesgue.
\end{exam}

\begin{exam}\label{c_00(H)} 
Recall that, for a nonempty set $H$, the vector space $c_{00}(H)$ of all finitely supported $\mathbb{R}$-valued functions on $H$ is a Dedekind complete vector lattice.
Furthermore, any vector space $X$ is linearly isomorphic to $c_{00}(H)$, where $H$ is a Hamel basis for $X$. As each order interval of $c_{00}(H)$ lies in a
finite-dimensional subspace of $c_{00}(H)$, every operator from $c_{00}(H)$ to any topological vector space $(Y,\tau)$ is $o\tau$-continuous,
$o\tau$-bounded, and $o\tau$-compact. On the other hand, the identity operator in $(c_{00},\|\cdot\|_\infty)$ is neither quasi $\sigma$-$KB$ nor quasi $\sigma$-Levi. 
\end{exam}

\begin{rem}\label{tau-Cauchy nets} 
{\em
It is well known that, if a net $y_\alpha$ of a topological vector space $(Y,\tau)$ is not $\tau$-Cauchy, then there exist $U\in\tau(0)$ and an increasing sequence $\alpha_n$
such that $y_{\alpha_{n+1}}-y_{\alpha_n}\not\in U$ for each $n$ (see, e.g., \cite[Lem.2.5]{AB1}).}   
\end{rem}

\begin{prop}\label{quasi-KB-vs-sigma-quasi-KB}
An operator $T$ from a locally solid lattice $(X,\varsigma)$ to a topological vector space $(Y,\tau)$ is quasi $KB$ iff $T$ is quasi $\sigma$-$KB$.
\end{prop}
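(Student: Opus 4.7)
The forward implication is immediate: every sequence is a net, so if $T$ sends every $\varsigma$-bounded increasing net in $X_+$ to a $\tau$-Cauchy net, it in particular sends every $\varsigma$-bounded increasing sequence in $X_+$ to a $\tau$-Cauchy sequence.

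For the nontrivial direction, I plan to argue by contradiction. Suppose $T$ is quasi $\sigma$-$KB$ but not quasi $KB$. Then there exists a $\varsigma$-bounded increasing net $(x_\alpha)_{\alpha\in A}$ in $X_+$ such that the image net $(Tx_\alpha)_{\alpha\in A}$ fails to be $\tau$-Cauchy in $Y$. By Remark \ref{tau-Cauchy nets}, one can extract a $\tau(0)$-neighborhood $U$ and an increasing sequence of indices $\alpha_1\le\alpha_2\le\dots$ in $A$ such that $Tx_{\alpha_{n+1}}-Tx_{\alpha_n}\notin U$ for every $n\in\mathbb{N}$.

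The point of this extraction is that the subsequence $(x_{\alpha_n})_{n\in\mathbb{N}}$ inherits the two properties needed to activate the hypothesis: it is increasing (since $\alpha_n$ is increasing and $(x_\alpha)$ is increasing along $A$), and it is contained in the $\varsigma$-bounded set $\{x_\alpha:\alpha\in A\}$, hence is itself $\varsigma$-bounded. By the quasi $\sigma$-$KB$ assumption, $(Tx_{\alpha_n})_{n\in\mathbb{N}}$ is $\tau$-Cauchy, so there is some $N$ with $Tx_{\alpha_{n+1}}-Tx_{\alpha_n}\in U$ for all $n\ge N$, contradicting the choice of the subsequence.

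There is no real obstacle here; the proof is a standard ``bad-subsequence'' argument, and the only thing to verify is that the subsequence extracted via Remark \ref{tau-Cauchy nets} remains a legitimate $\varsigma$-bounded increasing sequence in $X_+$, which is automatic. Note in particular that the ambient topological structure of $(X,\varsigma)$ is used only through the $\varsigma$-boundedness of $(x_\alpha)$; the argument does not require any Lebesgue, Fatou, or completeness hypothesis on $(X,\varsigma)$ or $(Y,\tau)$.
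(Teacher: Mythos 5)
Your proof is correct and follows essentially the same route as the paper's: both argue the nontrivial direction contrapositively by using Remark \ref{tau-Cauchy nets} to extract an increasing sequence of indices witnessing the failure of the Cauchy condition, and then observe that the resulting subsequence is still a $\varsigma$-bounded increasing sequence in $X_+$. No gaps.
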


\begin{proof}
The necessity is trivial. For the sufficiency, suppose $T$ is not quasi $KB$. 
Then there exists a $\varsigma$-bounded increasing net $x_\alpha$ in $X_+$ 
such that $Tx_\alpha$ is not $\tau$-Cauchy in $Y$. It follows from Remark \ref{tau-Cauchy nets}
that for some increasing sequence $\alpha_n$ and some $U\in\tau(0)$ 
$$
    Tx_{\alpha_{n+1}}-Tx_{\alpha_n}\not\in U \ \ \ (\forall n\in\mathbb{N}) 
    \eqno(1)
$$
Since the sequence $x_{\alpha_n}$ is increasing and $\varsigma$-bounded, the condition $(1)$ implies that $T$ is not quasi $\sigma$-$KB$.
\end{proof}

\begin{cor}\label{s-complete is sigma-KB is KB}
Every $\varsigma$-complete $\sigma$-$KB$ lattice $(X,\varsigma)$ is a $KB$ lattice.
\end{cor}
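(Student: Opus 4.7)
The plan is to reduce the statement directly to Proposition \ref{quasi-KB-vs-sigma-quasi-KB}, applied to the identity operator $I:(X,\varsigma)\to(X,\varsigma)$, and then to invoke $\varsigma$-completeness at the end to upgrade Cauchy nets to convergent ones.

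First, I would unwind the definitions to express the hypothesis at the operator level. The assumption that $(X,\varsigma)$ is a $\sigma$-$KB$ lattice says exactly that every $\varsigma$-bounded increasing sequence in $X_+$ is $\varsigma$-convergent, and this is precisely the statement that $I$ is a $\sigma$-$KB$ operator in the sense of Definition \ref{order-to-topology}(c). In particular, $I$ sends every such sequence to a $\varsigma$-Cauchy sequence, so $I$ is quasi $\sigma$-$KB$.

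Second, I would apply Proposition \ref{quasi-KB-vs-sigma-quasi-KB} to conclude that $I$ is quasi $KB$. Translating back, this means that every $\varsigma$-bounded increasing net $x_\alpha$ in $X_+$ is itself $\varsigma$-Cauchy in $X$. Finally, $\varsigma$-completeness provides a limit $x\in X$ of each such net, which is exactly the condition defining a $KB$ lattice.

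There is no genuine obstacle beyond the correct identification of the identity operator with the ambient lattice property; all of the substantive work — the passage from sequences to nets — has already been carried out in Proposition \ref{quasi-KB-vs-sigma-quasi-KB}, whose proof exploits Remark \ref{tau-Cauchy nets} to extract a witnessing subsequence from any non-Cauchy net. The role of $\varsigma$-completeness here is solely to convert the Cauchy conclusion into a genuine convergence statement; without it, one can only deduce that $(X,\varsigma)$ is a quasi $KB$ lattice.
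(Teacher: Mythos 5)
Your proposal is correct and follows essentially the same route as the paper's own proof: pass from the $\sigma$-$KB$ identity operator to quasi $\sigma$-$KB$, apply Proposition \ref{quasi-KB-vs-sigma-quasi-KB} to obtain quasi $KB$, and use $\varsigma$-completeness to convert Cauchy nets into convergent ones. No gaps.
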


\begin{proof}
The identity operator $I$ on $X$ is $\sigma$-$KB$ and hence quasi $\sigma$-$KB$.
By Proposition \ref{quasi-KB-vs-sigma-quasi-KB}, $I$ is quasi $KB$.
Then every $\varsigma$-bounded increasing net in $X_+$ is $\varsigma$-Cauchy, and hence is $\varsigma$-convergent.
\end{proof}
\begin{rem}\label{sequential}{\em
The following fact (cf., e.g., \cite[Prop.1.1]{AEG}) is well known: 
\begin{enumerate}
\item[$a)$] \
$\rho(y_\alpha, y)\to 0$ in a metric space $(Y,\rho)$ iff, for every subnet $y_{\alpha_\beta}$ of the net $y_\alpha$, 
there exists a (not necessary increasing) sequence $\beta_k$ of indices with $\rho(y_{\alpha_{\beta_k}}, y)\to 0$.
\end{enumerate}
Application of $a)$ to Cauchy nets/sequences in $(Y,\rho)$ gives that:
\begin{enumerate}
\item[$b)$] \
a net/sequence $y_\alpha$ of a metric space $(Y,\rho)$ is Cauchy iff, for every subnet/subsequence $y_{\alpha_\beta}$ 
of $y_\alpha$, there exists a sequence $\beta_k$ of indices such that the sequence $y_{\alpha_{\beta_k}}$ is $\rho$-Cauchy.
\end{enumerate}
Application of $b)$ and Proposition \ref{quasi-KB-vs-sigma-quasi-KB} gives that, for an operator $T$ 
from a locally solid lattice $(X,\varsigma)$ to a metric vector space $(Y,\rho)$, the following are equivalent:
\begin{enumerate}
\item[$(i)$] \
$T$ is quasi $KB$;
\item[$(ii)$] \
Every $\varsigma$-bounded increasing sequence $x_n$ in $X_+$ has a subsequence $x_{n_k}$ such that $Tx_{n_k}$ is $\rho$-Cauchy in $Y$. 
\end{enumerate}
Another application of $b)$ to an operator $T$ from a locally solid lattice $(X,\varsigma)$ to a metric vector space $(Y,\rho)$ gives the equivalence of the following conditions:
\begin{enumerate}
\item[$(i)'$] \
$T$ is $KB$;
\item[$(ii)'$] \
For any $\varsigma$-bounded increasing net $x_\alpha$ in $X_+$, there exist an element $x\in X$ and a sequence $\beta_k$ of indices 
such that $\rho(Tx_{\alpha_{\beta_k}},Tx)\to 0$.
\end{enumerate} }
\end{rem}

In many cases, like for (quasi) $KB$ or Levi operators, the lattice structure in the domain/range of operator can be relaxed to the ordered space structure \cite{AM,AGG,EG,EM2},
or combined with the lattice-norm structure \cite{AP,AEEM1,AEEM2,AGG}. Such generalizations are not included in the present paper. In Section 2 we investigate operators whose 
domains and/or ranges are locally solid lattices. Section 3 is devoted to operators between Banach lattices.

\section{Operators between locally solid lattices}

In this section, we study mostly operators whose domains and/or ranges are locally solid lattices. 
Observe first that the sets $L_{Leb}(X,Y)$, $L_{o\tau}(X,Y)$, $L_{o\tau{}b}(X,Y)$, and $L_{o\tau{}c}(X,Y)$ of 
Le\-bes\-gue, $o\tau$-continuous, $o\tau$-bounded, and $o\tau$-compact operators respectively from a ve\-ctor lattice $X$ 
to a topological vector space $(Y,\tau)$ are vector spaces and:
\begin{enumerate}
\item[$(*)$] \  $L_{o\tau}(X,Y)\subseteq L_{Leb}(X,Y)$;  
\item[$(**)$] \  $L_{o\tau c}(X,Y)\subseteq L_{o\tau b}(X,Y)$ since every totally bounded subset in $Y$ is bounded.  
\end{enumerate} 

\begin{theorem}\label{Thm.5.10 of AB2}
Let $X$ be a vector lattice and $(Y,\tau)$ be a locally convex Lebesgue lattice which is either Dede\-kind complete or $\tau$-complete.
Then $L_{o\tau c}(X,Y)\bigcap L_r(X,Y)$ is a band of the lattice $L_r(X,Y)$ of all regular operators from $X$ to $Y$. 
\end{theorem}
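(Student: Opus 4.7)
My plan is to identify $L_{o\tau c}(X,Y) \cap L_r(X,Y)$ as an order closed ideal of $L_r(X,Y)$, which is automatically a band. I will first verify that $L_r(X,Y)$ is a Dedekind complete vector lattice: under the Dedekind-completeness hypothesis on $Y$ this is immediate via the Riesz-Kantorovich theorem, while under the $\tau$-completeness hypothesis the Lebesgue property forces pre-Lebesgue, so Proposition \ref{tau compl pre Leb is Ded compl} delivers Dedekind completeness of $Y$ and Riesz-Kantorovich again applies. Thus $|T|$, $T^+$, $T^-$ make sense inside $L_r(X,Y)$ with the standard Riesz-Kantorovich suprema formulas.

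For the ideal step, the key auxiliary input is the classical fact that in a locally convex-solid Lebesgue lattice the solid hull of a $\tau$-totally bounded set is $\tau$-totally bounded. The positive case is direct: if $0 \le S \le T$ with $T \in L_{o\tau c}(X,Y)$, then for $x \in X_+$ and $y \in [0,x]$ the inequality $|Sy| = Sy \le Ty = |Ty|$ places $S[0,x]$ inside $\mathrm{sol}(T[0,x])$, which is $\tau$-totally bounded. To handle a general dominance $|S| \le |T|$, I first establish $T^+ \in L_{o\tau c}(X,Y)$ whenever $T \in L_{o\tau c}(X,Y) \cap L_r(X,Y)$; linearity then gives $T^- = T^+ - T \in L_{o\tau c}(X,Y)$ and $|T| = T^+ + T^- \in L_{o\tau c}(X,Y)$. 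The Riesz decomposition in $L_r(X,Y)$ writes $|S| = A + B$ with $0 \le A \le T^+$ and $0 \le B \le T^-$, whence the positive case yields $A, B \in L_{o\tau c}(X,Y)$, hence $|S| \in L_{o\tau c}(X,Y)$; one further application to $0 \le S^\pm \le |S|$ gives $S^\pm \in L_{o\tau c}(X,Y)$ and therefore $S = S^+ - S^- \in L_{o\tau c}(X,Y)$.

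For the order-closed step, suppose $0 \le T_\alpha \uparrow T$ in $L_r(X,Y)$ with each $T_\alpha \in L_{o\tau c}(X,Y)$. For $x \in X_+$, $(T - T_\alpha)x \downarrow 0$ in $Y$, hence $(T - T_\alpha)x \convtau 0$ by the Lebesgue property. Given an arbitrary $\tau$-neighborhood $W$ of zero, I choose solid $\tau$-neighborhoods $U$ and $V$ with $U + V \subseteq W$, and an index $\alpha_0$ with $(T - T_{\alpha_0})x \in U$. Solidness of $U$ combined with the pointwise sandwich $0 \le (T - T_{\alpha_0})y \le (T - T_{\alpha_0})x$ for $y \in [0,x]$ upgrades this to $Ty - T_{\alpha_0}y \in U$ uniformly in $y \in [0,x]$, so $T[0,x] \subseteq T_{\alpha_0}[0,x] + U$. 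Covering the $\tau$-totally bounded set $T_{\alpha_0}[0,x]$ by finitely many translates of $V$ then covers $T[0,x]$ by finitely many translates of $V + U \subseteq W$.

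The main obstacle is the modulus step $T \in L_{o\tau c} \cap L_r \Rightarrow T^+ \in L_{o\tau c}$. The naive Riesz-Kantorovich expression $T^+z = \sup\{Ty : 0 \le y \le z\}$ approximates $T^+z$ by finite lattice suprema drawn from $T[0,x]$, and such suprema need not form a $\tau$-totally bounded set. The route is to realize the directed approximating net inside a set obtained from the solid (or solid convex) hull of $T[0,x]$, combining the Lebesgue property to convert order suprema into $\tau$-limits with the solid-hull-of-totally-bounded principle to secure the required total boundedness.
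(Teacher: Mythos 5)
There is a genuine gap, and it sits exactly where you place the weight of your argument. The ``classical fact'' you invoke --- that in a locally convex-solid Lebesgue lattice the solid hull of a $\tau$-totally bounded set is $\tau$-totally bounded --- is false. Take $Y=L_2[0,1]$, a Dedekind complete Banach lattice with order continuous norm, and the singleton $\{\mathbb{I}_{[0,1]}\}$: it is compact, but its solid hull is the order interval $[-\mathbb{I}_{[0,1]},\mathbb{I}_{[0,1]}]$, which contains the indicators $\mathbb{I}_A$ of all measurable sets and hence infinitely many points at mutual distance $2^{-1/2}$, so it is not totally bounded. Consequently your ``positive case'' ($0\le S\le T$ and $T[0,x]$ totally bounded imply $S[0,x]\subseteq \mathrm{sol}(T[0,x])$ is totally bounded) does not follow from this route; the domination property for this class of operators is a genuinely hard theorem of Dodds--Fremlin/Aliprantis--Burkinshaw type, not a consequence of a solid-hull principle. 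The modulus step, which you yourself flag as ``the main obstacle,'' is likewise not carried out: the ``route'' you describe leans on the same false principle, so the entire ideal half of your band argument is unproved. By contrast, your order-closedness step ($0\le T_\alpha\uparrow T$ forces $T[0,x]\subseteq T_{\alpha_0}[0,x]+U$ for solid $U$) is correct, and your reduction of both hypotheses on $Y$ to Dedekind completeness of $Y$ matches the paper exactly.

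For comparison: the paper does not reprove any of the hard part. After reducing to the Dedekind complete case as you do, it simply cites \cite[Thm.5.10]{AB2}, which asserts that for each $x\in X_+$ the set $C(x)=\{T\in L_b(X,Y): T[0,x]\ \text{is}\ \tau\text{-totally bounded}\}$ is a band of $L_b(X,Y)=L_r(X,Y)$, and then observes that $L_{o\tau c}(X,Y)\cap L_r(X,Y)=\bigcap_{x\in X_+}C(x)$ is an intersection of bands and hence a band. If you want a self-contained proof, you must reproduce the proof of that cited theorem --- in particular its domination and modulus lemmas --- which is where the real work lies and where your current argument breaks down.
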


\begin{proof}
Since every Lebesgue lattice is pre-Lebesgue by \cite[Thm.3.23]{AB1}, and every topologically complete pre-Lebesgue lattice is Dedekind complete by
Proposition \ref{tau compl pre Leb is Ded compl}, the lattice $(Y,\tau)$ is Dede\-kind complete in either case. By \cite[Thm.5.10]{AB2}, 
for each $x\in X_+$, the set 
$$
   C(x)=\{T\in L_b(X,Y): T[0,x] \ \text{is}\  \tau\text{-totally}\ \text{bounded}\}
$$ 
is a band of the Dedekind complete lattice $L_b(X,Y)$ of all order bounded operators from $X$ to $Y$. 
Since $L_r(X,Y)=L_b(X,Y)$, the set $L_{o\tau{}c}(X,Y)\bigcap L_r(X,Y)=\bigcap_{x\in X_+}C(x)$ is also a band of $L_r(X,Y)$ as desired.
\end{proof}

The following two propositions might be known. We include their elementary proofs as we did not find appropriate references.    

\begin{prop}\label{regular are otau-bounded}
Each regular operator $T$ from a vector lattice $X$ to a locally solid lattice $(Y,\tau)$ is $o\tau$-bounded. 
\end{prop}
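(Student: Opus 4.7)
The plan is to reduce the claim to the positive case, where the image of an order interval lands inside an order interval, and then invoke the standard fact that in a locally solid lattice order bounded sets are topologically bounded.

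First I would write $T = T_1 - T_2$ with $T_1,T_2 : X \to Y$ positive (possible by the definition of regular operator). For any $x \in X_+$ and any $y \in [0,x]$, positivity gives $0 \le T_i y \le T_i x$, so $T_i[0,x] \subseteq [0, T_i x]$ for $i = 1,2$. Therefore
$$
T[0,x] \subseteq T_1[0,x] - T_2[0,x] \subseteq [0,T_1x] - [0,T_2x].
$$
It now suffices to show that each $[0,T_i x]$ is $\tau$-bounded and to note that a sum/difference of two $\tau$-bounded sets is $\tau$-bounded.

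The key step is the topological boundedness of order intervals. Given $z \in Y_+$ and a $\tau$-neighborhood $U$ of zero, pick a solid neighborhood $V$ of zero with $V \subseteq U$ (available because $(Y,\tau)$ is locally solid). Since $\tau$ is a linear topology, there is $\lambda > 0$ with $\lambda z \in V$; then for any $w \in [0,z]$ we have $|\lambda w| = \lambda w \le \lambda z$, so solidity of $V$ gives $\lambda w \in V \subseteq U$. Hence $\lambda \cdot [0,z] \subseteq U$, which is exactly $\tau$-boundedness of $[0,z]$.

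Combining these observations yields $\tau$-boundedness of $T[0,x]$ for every $x \in X_+$, i.e.\ $T$ is $o\tau$-bounded. No real obstacle arises here; the only point to be careful about is that the topology $\tau$ is only assumed Hausdorff and locally solid (not locally convex), so I should avoid invoking convexity and stick to the solid-neighborhood argument above.
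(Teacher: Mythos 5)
Your proof is correct and follows essentially the same route as the paper's: reduce to positive operators via the regular decomposition, observe that a positive operator maps $[0,x]$ into the order interval $[0,Tx]$, and then use a solid neighborhood to see that order intervals in a locally solid lattice are $\tau$-bounded. The only cosmetic difference is that you carry the decomposition $T=T_1-T_2$ through explicitly (using that a difference of bounded sets is bounded) where the paper simply says ``without loss of generality assume $T\ge 0$.''
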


\begin{proof}
Without lost of generality, assume $T\ge 0$. Let $x\in X_+$, $U\in\tau(0)$, and $V$ be a solid $\tau$-neighborhood of zero of $Y$ with $V\subseteq U$. 
Then $Tx\in\lambda V$ for all $\lambda\ge\lambda_0$ for some $\lambda_0>0$ and hence $T[0,x]\subseteq [0,Tx]\subseteq\lambda V\subseteq \lambda U$ 
for all $\lambda\ge\lambda_0$. Since $U\in\tau(0)$ was arbitrary, $T$ is $o\tau$-bounded.
\end{proof}

\begin{prop}\label{Lebesgue vs weak Lebesgue}
An order continuous positive operator $T$ from a vector lattice $X$ to a locally convex-solid lattice $(Y,\tau)$ is Lebesgue iff $T$ is weakly Lebesgue.
\end{prop}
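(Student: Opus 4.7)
The forward direction is immediate: since $(Y,\tau)$ is locally convex-solid, the topology $\tau$ is finer than $\sigma(Y,Y')$, so $Tx_\alpha\convtau 0$ implies $Tx_\alpha\convw 0$. The content is in the converse, and I would prove it as follows.

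Assume $T\colon X\to Y$ is positive, order continuous, and weakly Lebesgue, and let $x_\alpha\downarrow 0$ in $X$. Order continuity and positivity force $y_\alpha:=Tx_\alpha\downarrow 0$ in $Y$, and the weak Lebesgue hypothesis gives $\phi(y_\alpha)\to 0$ for every $\phi\in Y'$. The problem thus reduces to the following Dini-type claim inside $(Y,\tau)$: if $y_\alpha\downarrow 0$ in $Y_+$ and $\phi(y_\alpha)\to 0$ for every $\phi\in Y'$, then $y_\alpha\convtau 0$.

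To prove this claim, fix a continuous Riesz seminorm $p$ generating $\tau$, and let $U_p=\{y\in Y:p(y)\le 1\}$, which is solid. Its polar $U_p^\circ\subseteq Y'$ is weak$^*$-compact, convex, and $\sigma(Y',Y)$-closed. Because $Y'$ is an ideal of $Y^\sim$ (the local convex-solid hypothesis, cf.\ \cite[Thm.3.49]{AB2}) and $p$ is a Riesz seminorm, one checks that $\phi\in U_p^\circ$ implies $|\phi|\in U_p^\circ$: indeed, for $z\ge 0$, $|\phi|(z)=\sup\{|\phi(u)|:|u|\le z\}\le\sup\{p(u):|u|\le z\}=p(z)$. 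Consequently, for every $y\in Y_+$,
\[
p(y)=\sup_{\phi\in U_p^\circ}|\phi(y)|=\sup_{\psi\in K}\psi(y), \qquad K:=U_p^\circ\cap Y'_+,
\]
and $K$ is weak$^*$-compact as a weak$^*$-closed subset of $U_p^\circ$.

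Now consider the net of weak$^*$-continuous functions $F_\alpha\colon K\to\mathbb R$, $F_\alpha(\psi):=\psi(y_\alpha)$. Since $y_\alpha\downarrow$ and each $\psi\in K$ is positive, $F_\alpha$ is pointwise decreasing, and the weak-null assumption yields $F_\alpha\downarrow 0$ pointwise on $K$. Dini's theorem on the compact set $K$ therefore gives $\sup_{\psi\in K}F_\alpha(\psi)\to 0$, i.e.\ $p(y_\alpha)\to 0$. Running $p$ through a defining family of Riesz seminorms for $\tau$ yields $Tx_\alpha\convtau 0$, proving $T$ is Lebesgue. The main technical point to get right is the identification $p(y)=\sup_{\psi\in K}\psi(y)$ on $Y_+$, which is what lets Dini bootstrap pointwise decrease on the polar into uniform convergence; everything else is a routine assembly.
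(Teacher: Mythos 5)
Your proof is correct and follows essentially the same route as the paper: reduce to the statement that a decreasing net in $Y_+$ which is weakly null is $\tau$-null, which the paper simply cites as a Dini-type argument (\cite[Thm.3.52]{AB2}) while you prove it in full via the polar $U_p^\circ$ and Dini's theorem on the weak$^*$-compact set $K$. The extra detail is sound (including the key identity $p(y)=\sup_{\psi\in K}\psi(y)$ on $Y_+$), so the two arguments coincide in substance.
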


\begin{proof}
The necessity is trivial. For the sufficiency, assume that $T$ is weakly Lebesgue (i.e., $T:X\to Y$ is Lebesgue with respect to the weak topology $\sigma(Y,Y')$ on $Y$).
Let $x_\alpha\downarrow 0$ in $X$. Then $Tx_\alpha\downarrow 0$ in $Y$. Since $T$ is $\sigma(Y,Y')$-Lebesgue, $Tx_\alpha\xrightarrow[]{\sigma(Y,Y')}0$. 
By using Dini-type arguments (cf. \cite[Thm.3.52]{AB2}), we conclude $Tx_\alpha\convtau 0$, as desired.
\end{proof}

\noindent
We do not know any example of an order continuous $o\tau$-bounded weakly Lebesgue operator from a vector lattice to a locally solid lattice that is not Lebesgue.

\begin{prop}\label{Lebesgue vs weak weak cont}
A weakly continuous positive operator $T$ from a Lebesgue $($$\sigma$-Lebesgue$)$ lattice $(X, \varsigma )$ to 
a locally solid lattice $(Y,\tau)$ is Lebesgue $($$\sigma$-Lebesgue$)$.
\end{prop}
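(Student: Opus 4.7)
The plan is to follow the template of Proposition \ref{Lebesgue vs weak Lebesgue}: turn a decreasing $o$-null net in $X$ into a weakly null net in $Y$ via weak continuity of $T$, and then upgrade weak convergence to $\tau$-convergence through a Dini-type argument. First I would take $x_\alpha \downarrow 0$ in $X$. Since $(X,\varsigma)$ is Lebesgue, $x_\alpha \convvars 0$, and because every $\varsigma$-continuous functional belongs to $X'$, this forces $x_\alpha \to 0$ in $\sigma(X,X')$. Weak continuity of $T$ then yields $Tx_\alpha \xrightarrow{\sigma(Y,Y')} 0$.

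The next step is to show that $Tx_\alpha \downarrow 0$ in $Y$. Positivity of $T$ makes $Tx_\alpha$ a decreasing net lying in $Y_+$. If $y \le Tx_\alpha$ for every $\alpha$, then $y_+ \le Tx_\alpha$ (since $Tx_\alpha \in Y_+$), so the net $Tx_\alpha - y_+$ lies in $Y_+$ and converges weakly to $-y_+$. Since $Y_+$ is $\tau$-closed by local solidity and convex, hence weakly closed (in the locally convex-solid setting), one deduces $-y_+ \in Y_+$, which forces $y_+ = 0$ and hence $y \le 0$. Thus $\inf_\alpha Tx_\alpha = 0$.

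With $Tx_\alpha \downarrow 0$ and $Tx_\alpha \xrightarrow{\sigma(Y,Y')} 0$ in hand, the Dini-type argument \cite[Thm.3.52]{AB2} invoked in the proof of Proposition \ref{Lebesgue vs weak Lebesgue} applies verbatim to give $Tx_\alpha \convtau 0$, completing the Lebesgue case. The $\sigma$-Lebesgue version follows by repeating the argument with sequences in place of nets.

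The main obstacle is the middle step: passing from weak nullity of the decreasing net $Tx_\alpha$ to monotone convergence $Tx_\alpha \downarrow 0$. This hinges on weak closedness of the positive cone $Y_+$, which is immediate when $(Y,\tau)$ is locally convex-solid, but in a purely locally solid $(Y,\tau)$ one should supplement the cone-closure step with a direct separation using positive functionals in $Y' \cap Y^\sim_+$ to rule out nonzero lower bounds.
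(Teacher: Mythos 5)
Your proof follows essentially the same route as the paper's: use the Lebesgue property of $(X,\varsigma)$ to get $x_\alpha\xrightarrow{\sigma(X,X')}0$, push this through the weakly continuous $T$, and finish with the Dini-type argument of \cite[Thm.3.52]{AB2}. Your extra middle step establishing $Tx_\alpha\downarrow 0$ via weak closedness of $Y_+$ is sound (in the locally convex-solid setting where the Dini theorem applies anyway) and merely makes explicit a point the paper asserts without comment, since that theorem already identifies the weak limit of a decreasing net with its infimum.
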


\begin{proof} 
As arguments are similar, we restrict ourselves to the Lebesgue case. Let $x_\alpha\downarrow 0$ in $X$.
Since $(X, \varsigma )$ is Lebesgue, $x_\alpha\convvars 0$ and hence $x_\alpha\xrightarrow[]{\sigma(X,X')}0$. The weak continuity of $T$
implies $Tx_\alpha\xrightarrow[]{\sigma(Y,Y')}0$. Since $Tx_\alpha\downarrow 0$ in $Y$, it follows $Tx_\alpha\convtau 0$, as 
in the proof of Proposition \ref{Lebesgue vs weak Lebesgue}.
\end{proof}

\begin{theorem}\label{Thm.3.23 from AB2}
Each positive Lebesgue operator $T$ from a vector lattice $X$ to a locally solid lattice $(Y,\tau)$ is quasi Lebesgue.
\end{theorem}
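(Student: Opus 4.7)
The plan is to argue by contradiction. Assume $T$ is positive and Lebesgue but not quasi Lebesgue: there is $0 \le x_\alpha \uparrow \le x$ in $X_+$ for which $(Tx_\alpha)$ is not $\tau$-Cauchy in $Y$. Using local solidity of $\tau$, choose a solid $U \in \tau(0)$, and apply Remark~\ref{tau-Cauchy nets} to extract an increasing sequence of indices $(\alpha_n)$ with $Tx_{\alpha_{n+1}} - Tx_{\alpha_n} \notin U$ for every $n$. Setting $y_n := x_{\alpha_{n+1}} - x_{\alpha_n} \ge 0$, the telescoping identity gives $\sum_{k=1}^{N} y_k = x_{\alpha_{N+1}} - x_{\alpha_1} \le x$ for every $N$, while $Ty_n \notin U$ for every $n$.

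The aim now is to deduce $Ty_n \convtau 0$ from the Lebesgue property, contradicting the above. I would pass to the Dedekind completion $X^\delta$ of $X$: the bounded increasing partial sums converge there, so the tails $w_n := \sum_{k \ge n} y_k \in X^\delta$ form a decreasing sequence with $w_n \downarrow 0$ in $X^\delta$. Since $0 \le y_n \le w_n$ for each $n$, this exhibits $y_n \convo 0$ in $X^\delta$, and a fortiori $y_n \convuo 0$ in $X^\delta$. Because $X$ is an order dense, hence regular, sublattice of $X^\delta$, and $uo$-convergence of sequences is invariant under regular sublattices (cf.\ \cite{GTX}), we obtain $y_n \convuo 0$ in $X$. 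Substituting $u = x$ in the definition of $uo$-convergence and using $y_n \wedge x = y_n$ (as $0 \le y_n \le x$), this upgrades to $y_n \convo 0$ in $X$. Finally, Remark~\ref{c_00}(a) identifies positive Lebesgue operators into a locally solid lattice with positive $o\tau$-continuous operators, so $Ty_n \convtau 0$, the desired contradiction.

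The step most deserving of attention is the descent from $X^\delta$ back to $X$: the natural dominating tails $w_n$ live a priori only in $X^\delta$ and need not belong to $X$ when the latter fails to be Dedekind complete, so the Lebesgue hypothesis cannot be invoked on them directly. The manoeuvre of weakening $o$-convergence to $uo$-convergence, exploiting its invariance under the regular embedding $X \hookrightarrow X^\delta$ to return to $X$, and only then using the order bound $y_n \le x \in X$ to recover genuine $o$-convergence in $X$, is the crucial device that avoids any Dedekind-completeness hypothesis on the domain.
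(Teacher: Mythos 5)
Your proof is correct, but it follows a genuinely different route from the paper's. The paper argues directly and affirmatively: writing $u_\alpha:=x-x_\alpha\downarrow\ \ge 0$ and letting $L$ be the (upward directed) set of lower bounds of $(u_\alpha)$, it invokes the fact that the double-indexed net $(u_\alpha-z)_{\alpha;z\in L}$ decreases to $0$ (cf.\ \cite[Thm.1.2]{Em}); one application of the Lebesgue hypothesis to this single net yields $(Tu_\alpha-Tz)\convtau 0$, from which $\tau$-Cauchyness of $Tu_\alpha$, hence of $Tx_\alpha$, follows by a standard $V-V\subseteq U$ argument. You instead argue by contradiction, reduce to a sequence of increments $y_n=x_{\alpha_{n+1}}-x_{\alpha_n}$ via Remark \ref{tau-Cauchy nets}, pass to $X^\delta$ to dominate the $y_n$ by tails $w_n\downarrow 0$, descend back to $X$ through the $uo$-convergence invariance for regular sublattices of \cite{GTX} together with the order bound $y_n\le x$, and finish with Lemma \ref{PC1}. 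All steps check out: the partial sums are increasing and bounded by $x$, so $w_n\downarrow 0$ and $0\le y_n\le w_n$ in $X^\delta$; order density of $X$ in $X^\delta$ gives regularity; and $y_n\convuo 0$ in $X$ with $y_n\wedge x=y_n$ indeed upgrades to $y_n\convo 0$ in $X$. (Two cosmetic points: Remark \ref{tau-Cauchy nets} produces the neighborhood $U$ rather than letting you choose it, and its solidity is never actually used; also, your appeal to Lemma \ref{PC1} is legitimate since its proof is independent of Theorem \ref{Thm.3.23 from AB2}.) The trade-off: the paper's argument is shorter, stays entirely inside $X$, and needs only the one fact about decreasing nets and their lower bounds, whereas yours imports the Dedekind completion and the $uo$-transfer theorem but makes transparent the kinship with the classical ``Lebesgue implies pre-Lebesgue'' difference-sequence argument of \cite[Thm.3.23]{AB1}.
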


\begin{proof}
Let $x_\alpha\uparrow\le x\in X$, so $u_\alpha:=x-x_\alpha\downarrow\ge 0$.
Therefore $(u_\alpha-z)_{\alpha;z\in L}\downarrow 0$, where
$L=\{z\in X: (\forall\alpha\in A)[z\le u_\alpha]\}$ is the upward directed set of lower bounds of the net $u_\alpha$
(cf. \cite[Thm.1.2]{Em}). Since $T$ is Lebesgue and $T\ge 0$, 
$$
  (Tu_\alpha-Tz)_{\alpha;z\in L}\convtau 0 \ \ \ \& \ \ \ (Tu_\alpha-Tz)_{\alpha;z\in L}\downarrow\ge 0.
  \eqno(2)
$$
It follows from $(2)$, that $(Tu_\alpha-Tz)_{\alpha;z\in L}\downarrow 0$. Then $Tu_\alpha$ and hence $Tx_\alpha$ is $\tau$-Cauchy, as desired.
\end{proof}

\noindent
We do not know any example of an $o\tau$-bounded Lebesgue operator which is not quasi Lebesgue. In contrast to Theorem 3.24 of \cite{AB1}, 
the converse of Theorem \ref{Thm.3.23 from AB2} is false even if $Y=\mathbb{R}$ due to Example \ref{c_w(R)}.
 
Recall that an Archimedean vector lattice $X$ is called {\em laterally} ({\em $\sigma$-}) {\em complete}
whenever every (countable) subset of pairwise disjoint vectors of $X_+$ has a supremum.
Every laterally complete vector lattice $X$ contains a weak order unit and every band of such an $X$ is a principal band (cf. \cite[Thm.7.2]{AB1}). 
Furthermore, by the Veksler~-- Geiler theorem  (cf. \cite[Thm.7.4]{AB1}), if a vector lattice $X$ is laterally ($\sigma$-) complete, then 
$X$ satisfies the (principal projection) projection property. 

A vector lattice that is both laterally and Dedekind ($\sigma$-) complete is referred 
to as {\em universally} ($\sigma$-) {\em complete}. It follows from \cite[Thm.7.4]{AB1} that a vector lattice $X$ is universally complete 
iff $X$ is Dedekind $\sigma$-complete and laterally complete iff $X$ is uniformly complete and laterally complete (cf. \cite[Thm.7.5]{AB1}). 
Similarly, a laterally $\sigma$-complete vector lattice $X$ is Dedekind $\sigma$-complete iff $X$ is uniformly complete.

A universal completion of a vector lattice $X$ is a laterally and De\-de\-kind complete vector lattice $X^u$ which contains $X$ as an order dense sublattice. 
Every vector lattice has  unique universal completion (cf. \cite[Thm.7.21]{AB1}).

A laterally complete vector lattice $X$ is discrete iff $X$ is lattice isomorphic to $\mathbb{R}^S$ for some nonempty set $S$ iff $X$ admits 
a Hausdorff locally convex-solid Lebesgue topology iff the space $X_n^\sim$ of order continuous functionals on $X$ separates $X$ (cf. \cite[Thm.7.48]{AB1}); 
in each of these cases $X = X^u$.

\begin{definition}\label{tau-laterally-complete}
{\em
A topological vector lattice $(Y,\tau)$ is called {\em $\tau$-laterally} ({\em $\sigma$-}) {\em complete} 
whenever every $\tau$-bounded (countable) subset of pairwise disjoint vectors of $X_+$ has a supremum.
}
\end{definition}

Every laterally ($\sigma$-) complete topological vector lattice $(Y,\tau)$ is $\tau$-laterally ($\sigma$-) complete. Every Dedekind complete $AM$-space 
$X$ with an order unit  is $\tau$-laterally complete with respect to the norm.

\begin{exam}\label{not sigma Lebesgue tau-laterally}
{\em
Let $X$ be a vector lattice of real functions on $\mathbb{R}$ such that each $f\in X$ may differ from a constant say $a_f$ on a countable subset of $\mathbb{R}$
and $f-a_f\mathbb{I}_\mathbb{R}\in\ell_1(\mathbb{R})$ for each $f\in X$. 
\begin{enumerate}
\item[$(i)$] \ 
The vector lattice $X$ is not Dedekind $\sigma$-complete, as $f_n:=\mathbb{I}_{\mathbb{R}\setminus\{1,2,...,n\}}\downarrow \ge 0$ yet 
$\inf\limits_{n \in \mathbb{N}}f_n$ does not exist in $X$.
\item[$(ii)$] \ 
The vector lattice $X$ is a Banach space with respect to the norm $\|f\|:=|a_f|+\|f-a_f\mathbb{I}_\mathbb{R}\|_1$.
\item[$(iii)$] \ 
The vector lattice $X$ is not $\tau$-laterally $\sigma$-complete with respect to the norm topology on $X$. Indeed,
the norm bounded countable set of pairwise disjoint orths $e_n=\mathbb{I}_{\{n\}}$ of $X_+$ has no supremum in $X$.
\item[$(iv)$] \ 
The identity operator $I$ on $X$ is not $\sigma$-Lebesgue, as\\
$\frac{1}{n}\mathbb{I}_{\mathbb{R}\setminus\{1,2,...,n\}}\downarrow 0$ in $X$ yet
$\|\frac{1}{n}\mathbb{I}_{\mathbb{R}\setminus\{1,2,...,n\}}\|=\frac{n+1}{n}\not\to 0$.
\end{enumerate}
}
\end{exam}

\begin{prop}\label{straightforward}
Any Levi $($$\sigma$-Levi$)$ lattice is $\tau$-la\-te\-rally $($$\sigma$-$)$ complete and Dedekind $($$\sigma$-$)$ complete.
\end{prop}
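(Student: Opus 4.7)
The plan is to reduce both parts of the conclusion to a direct application of the Levi property: in each case, I would manufacture an appropriate increasing net from the hypothesized data and verify it is $\varsigma$-bounded, so that Levi delivers the required supremum. The $\sigma$-version will follow by the same strategy with countable data and $\sigma$-Levi replacing Levi.

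For Dedekind completeness, given a nonempty order-bounded set $A\subseteq X$, I reduce (after translating and taking positive parts) to the case $A\subseteq[0,z]$ for some $z\in X_+$. The upward directed net of finite suprema $y_F:=\bigvee_{a\in F}a$, indexed by finite subsets $F\subseteq A$ ordered by inclusion, satisfies $0\le y_F\uparrow\le z$. Since order intervals in a locally solid lattice are automatically $\varsigma$-bounded, the net $(y_F)$ is $\varsigma$-bounded and increasing in $X_+$, so the Levi hypothesis yields $y:=\sup_F y_F\in X$; a short verification gives $y=\sup A$. The $\sigma$-case is handled identically by restricting to countable $A=\{a_n\}$ and using the partial-supremum sequence $y_n:=a_1\vee\cdots\vee a_n$, to which $\sigma$-Levi applies.

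For $\tau$-lateral completeness, given a $\varsigma$-bounded pairwise disjoint family $\{x_i\}_{i\in I}\subseteq X_+$, I form $y_F:=\bigvee_{i\in F}x_i=\sum_{i\in F}x_i$ for finite $F\subseteq I$ directed by inclusion. This is again an increasing net in $X_+$; once one shows that $(y_F)$ is $\varsigma$-bounded, Levi provides a supremum in $X$ which coincides with $\sup_{i\in I}x_i$. The countable version is analogous and uses $\sigma$-Levi on the sequence of partial suprema.

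The main obstacle is precisely the $\varsigma$-boundedness of the partial suprema $(y_F)$: it has to be extracted from the $\varsigma$-boundedness of $\{x_i\}$ together with the pairwise disjointness, presumably via a solid-hull argument exploiting that in a locally solid lattice the solid hull of a bounded set remains bounded. The Dedekind completeness step, by contrast, uses only the standard fact that order intervals in locally solid lattices are topologically bounded, and is entirely routine.
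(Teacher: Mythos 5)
Your argument for Dedekind ($\sigma$-)completeness is correct and is exactly what the paper has in mind when it declares this part ``trivial'': order intervals of a locally solid lattice are $\varsigma$-bounded, so the net of finite suprema of an order bounded set is an increasing $\varsigma$-bounded net in $X_+$ and the Levi property applies. For $\tau$-lateral completeness you follow the same route as the paper's proof: form the net $y_F=\bigvee_{i\in F}x_i$ over finite subsets $F$ and invoke Levi. The paper simply asserts that this net is $\tau$-bounded; you, rightly, single this assertion out as ``the main obstacle'' --- but you do not close it, and the solid-hull device you propose cannot close it. Since the $x_i$ are pairwise disjoint and positive, $\bigvee_{i\in F}x_i=\sum_{i\in F}x_i$, which is not dominated by any single $x_i$ and hence does not lie in the solid hull of $\{x_i\}_{i\in I}$; the fact that the solid hull of a $\varsigma$-bounded set is $\varsigma$-bounded therefore gives you nothing here.

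In fact the step is false in general, so the gap is not merely technical. In $\ell_1$ the set $D=\{e_n\}_{n\in\mathbb{N}}$ is norm bounded and pairwise disjoint in $(\ell_1)_+$, but the partial sums $\sum_{k=1}^{n}e_k$ have norm $n$, so the net $(y_F)$ is not $\tau$-bounded, Levi cannot be applied, and indeed $\sup_n e_n$ does not exist in $\ell_1$ (an upper bound would need all coordinates at least $1$) even though $\ell_1$, being a $KB$-space, is Levi. Thus, with Definition \ref{tau-laterally-complete} read literally, a Levi lattice need not be $\tau$-laterally ($\sigma$-)complete; the very same unjustified boundedness claim appears in the paper's own proof, so this is a defect of the statement rather than only of your attempt. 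To rescue the lateral-completeness half one must either assume that the family of finite suprema (not just the disjoint family itself) is $\tau$-bounded --- automatic, for instance, in an $AM$-space --- or weaken the definition of $\tau$-lateral completeness accordingly. Your Dedekind-completeness half stands as written.
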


\begin{proof}
We consider the Levi case only, because the $\sigma$-Levi case is similar. The  Dedekind completeness of $X$ is trivial. 
Let $D$ be a $\tau$-bounded subset of pairwise disjoint positive vectors of a Levi lattice $(Y,\tau)$. Then the collection $D^\vee$ 
of suprema of finite subsets of $D$ forms an increasing $\tau$-bounded net indexed by the set ${\cal P}_{fin}(D)$ of all finite subsets of $D$ 
directed by inclusion.  Since $(Y,\tau)$ is Levi,  $D^\vee \uparrow d$ for some $d \in Y$. It follows from $\sup D = \sup D^\vee =d$ that 
$(Y,\tau)$ is  $\tau$-la\-te\-rally complete.
\end{proof}

\begin{exam}\label{s}
The locally solid lattice $(\mathbb{R}^S,\tau)$, where $\tau$ is the product topology on the vector lattice $\mathbb{R}^S$ of real functions 
on a set $S$ is locally convex,  Lebesgue, Levi, and universally complete.
\end{exam}

\begin{prop}\label{Thm.7.8. from AB1}
Each regular operator $T$ from a laterally $\sigma$-complete vector lattice $X$ to a $\sigma$-Lebesgue lattice $(Y,\tau)$ is $\sigma$--Lebesgue.
\end{prop}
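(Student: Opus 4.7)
The plan is to reduce to the positive case, apply a classical $\sigma$-order continuity theorem for positive operators on laterally $\sigma$-complete vector lattices, and then use the $\sigma$-Lebesgue hypothesis on $(Y,\tau)$.

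First I would note that the collection of $\sigma$-Lebesgue operators from $X$ into $(Y,\tau)$ is a vector space, since $\tau$-null sequences are closed under addition and scalar multiplication. Writing the regular operator as $T = T_1 - T_2$ with $T_1, T_2 \geq 0$ therefore reduces the problem to the case $T \geq 0$.

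For positive $T$ and any sequence $x_n \downarrow 0$ in $X$, I would invoke the classical fact (see, e.g., \cite[Thm.7.8]{AB1}) that every positive operator from a laterally $\sigma$-complete vector lattice into an Archimedean vector lattice is $\sigma$-order continuous; this yields $Tx_n \downarrow 0$ in $Y$. The $\sigma$-Lebesgue property of $(Y,\tau)$ then immediately gives $Tx_n \convtau 0$, proving that $T$ is $\sigma$-Lebesgue.

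The only nontrivial input is the classical $\sigma$-order continuity result, and this is also where the main obstacle would lie if one had to prove the statement from scratch. The standard argument proceeds by contradiction: assuming that for some $0 < v \in Y$ one has $v \leq Tx_n$ for all $n$, one uses laterally $\sigma$-completeness to build a positive element of $X$ that dominates a sequence incompatible with positivity of $T$ on $[0, x_1]$. For our purposes this theorem is cited, which makes the argument a short three-step assembly: reduce to positive $T$, apply the classical $\sigma$-order continuity result, and invoke the $\sigma$-Lebesgue hypothesis on the target.
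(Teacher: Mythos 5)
Your proposal is correct and follows essentially the same route as the paper's proof: reduce to $T\ge 0$, invoke \cite[Thm.7.8]{AB1} to get $\sigma$-order continuity so that $Tx_n\downarrow 0$ in $Y$, and conclude by the $\sigma$-Lebesgue property of $(Y,\tau)$. Your explicit justification of the reduction to the positive case (via the vector space structure of the class of $\sigma$-Lebesgue operators) is a small but welcome addition to what the paper leaves implicit.
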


\begin{proof}
Without lost of generality, we can suppose $T\ge 0$. Let $x_n\downarrow 0$ in $X$. By Theorem 7.8 of \cite{AB1}, $T$ is $\sigma$-order continuous, 
and hence $Tx_n\downarrow 0$ in $Y$. Since $(Y,\tau)$ is $\sigma$--Lebesgue, $Tx_n\convtau 0$.
\end{proof}

\noindent
We do not know whether or not every positive operator $T$ from a laterally complete vector lattice $X$ to a Lebesgue lattice $(Y,\tau)$ is Lebesgue.

Each laterally $\sigma$-complete locally solid lattice is $\sigma$-Le\-bes\-gue and pre-Le\-bes\-gue \cite[Thm.7.49]{AB1}.

\begin{prop}\label{varsigma1}
Let $T$ be a continuous operator from a laterally $\sigma$-complete locally solid lattice $(X,\varsigma)$ to a topological vector space $(Y, \tau)$.
Then  $T$ is $\sigma$-Lebesgue in each of the following cases:
\begin{enumerate}
\item[$(i)$] \  
$X$ is discrete.
\item[$(ii)$] \  
$(X, \varsigma)$ is metrizable.
\end{enumerate}
\end{prop}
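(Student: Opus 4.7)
The approach in both cases is to reduce the claim to showing that $(X,\varsigma)$ is itself $\sigma$-Lebesgue: once $x_n\downarrow 0$ in $X$ forces $x_n\convvars 0$, continuity of $T$ yields $Tx_n\convtau 0$. The result \cite[Thm.7.49]{AB1} cited just before the proposition already asserts that every laterally $\sigma$-complete locally solid lattice is $\sigma$-Lebesgue and pre-Lebesgue, so the conclusion follows immediately from continuity of $T$; hypotheses (i) and (ii) identify two natural settings in which that theorem is invoked.

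A second route, using only the pre-Lebesgue part of the cited result, proceeds as follows for case (ii). Pre-Lebesgue ensures $x_n$ is $\varsigma$-Cauchy. Passing to the topological completion $(\hat X,\hat\varsigma)$ --- which is Lebesgue by \cite[Thm.3.26]{AB1} and Dedekind complete by Proposition~\ref{tau compl pre Leb is Ded compl} --- metrizability ensures $x_n$ converges to some $z\in\hat X$ with $0\le z\le x_n$ for every $n$. Order-density of $X$ in $\hat X$ (a standard feature of the pre-Lebesgue completion) combined with $\inf_n x_n=0$ in $X$ rules out $z>0$: such a $z$ would dominate some positive $v\in X$, forcing $v\le x_n$ for all $n$ and hence $v\le 0$. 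Thus $x_n\convvars 0$, and continuity of $T$ finishes.

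For case (i), atomicity gives, for each atom $a\in X$, a decreasing sequence $(\lambda_n)\subset\mathbb{R}_+$ with $P_ax_n=\lambda_n a\le x_n$; a strictly positive limit $\lambda$ would give $\lambda a\le \inf_n x_n=0$ in $X$, a contradiction, so $\lambda_n\downarrow 0$. Lateral $\sigma$-completeness combined with this atom-wise decay then delivers $x_n\convvars 0$, either through the $\sigma$-Lebesgue clause of \cite[Thm.7.49]{AB1} specialized to the discrete case, or by reducing to a suitable $\mathbb{R}^S$-type ambient space and repeating the order-density squeeze of case (ii).

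The main obstacle in the second route is justifying the order-density of $X$ in $\hat X$ for pre-Lebesgue locally solid lattices; this is the nontrivial ingredient that underpins ruling out strictly positive lower bounds of the sequence $(x_n)$ in the completion, and it is what separates ``$\varsigma$-Cauchy'' from ``$\varsigma$-null to $0$'' under each of the extra hypotheses (i) and (ii).
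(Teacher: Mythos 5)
Your first paragraph is exactly the paper's argument: \cite[Thm.7.49]{AB1} makes $(X,\varsigma)$ itself $\sigma$-Lebesgue and continuity of $T$ finishes, the paper's only cosmetic deviation being that in case $(ii)$ it reduces to case $(i)$ via the fact that an infinite-dimensional laterally $\sigma$-complete metrizable locally solid lattice is lattice isomorphic to $\mathbb{R}^{\mathbb{N}}$. The alternative routes in your remaining paragraphs are therefore superfluous, and you are right to flag the order-density of $X$ in $\hat X$ as the weak point of the second one: order density would imply regularity of $X$ in $\hat X$, which by \cite[Thm.2.41]{AB1} is not automatic, so those paragraphs should not be read as complete proofs.
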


\begin{proof}
Let $x_n\downarrow 0$ in $X$. In case $(i)$, $(X,\varsigma)$ is $\sigma$-Lebesgue by \cite[Thm.7.49]{AB1}. The result follows because $T$ is $\varsigma\tau$-continuous.
In case $(ii)$, the result follows from  $(i)$ as every infinite dimensional laterally $\sigma$-complete metrizable locally solid lattice is
lattice isomorphic to $\mathbb{R}^\mathbb{N}$.
\end{proof}

Since $(X,u\varsigma)$ is pre-Lebesgue iff $(X,\varsigma)$ is pre-Lebesgue \cite[Cor.4.3]{Tay2}, the statement of Proposition \ref{varsigma1}
is also true when $(X,\varsigma)$ is replaced by $(X,u\varsigma)$. 

\begin{rem} 
{\em
Let $T$ be a positive operator from a locally solid lattice $(X,\varsigma)$ to a laterally $\sigma$-complete locally solid lattice $(Y, \tau)$ and let 
$0\le x_\alpha\uparrow \le x$ in $X$. Since $(Y, \tau)$ is pre-Lebesgue by \cite[7.49]{AB1}, it follows from $0\le Tx_\alpha\uparrow \le Tx$ that
$Tx_\alpha$ is $\tau$-Cauchy in $Y$ and hence $T$ is quasi Lebesgue.
}
\end{rem}

\begin{rem}
{\em
Let $T$ be a continuous operator from a laterally $\sigma$-complete locally solid lattice $(X,\varsigma)$ to a locally solid lattice $(Y,\tau)$.
\begin{enumerate}
\item[$a)$] \
Suppose $T\ge 0$ and $0\le x_\alpha\uparrow\le x$ in $X$. Since $(X,\varsigma)$ is pre-Lebesgue by \cite[7.49]{AB1} then $x_\alpha$ is $\varsigma$-Cauchy in $X$ and hence 
$Tx_\alpha$ is $\tau$-Cauchy in $Y$ meaning that $T$ is quasi Lebesgue.
\item[$b)$] \
Suppose $(X,\varsigma)$ is Fatou, and $x_\alpha\downarrow 0$ in $X$. Then by \cite[7.50]{AB1} 
$(X,\varsigma)$ is Lebesgue, so $x_\alpha\convvars 0$ and hence $Tx_\alpha\convtau 0$ meaning that $T$ is Lebesgue.
\item[$c)$] \
Suppose $\varsigma$ is a metrizable locally solid  topology and $x_\alpha\downarrow 0$ in $X$.  
Then $(X,\varsigma)$ is Lebesgue by \cite[Thm.7.55]{AB1}. Thus $x_\alpha\convvars 0$ and hence $Tx_\alpha\convtau 0$ meaning that $T$ is Lebesgue.
\item[$d)$] \
If $(X,\varsigma)$ is a Fr{\'e}chet lattice then $\varsigma$ is uniqu\-ely defined and every regular $T:(X, \varsigma)\to(Y, \tau)$ is continuous 
(cf. \cite[Thm.5.19 and Thm.5.21]{AB1}). In particular, every regular operator from a laterally $\sigma$-complete Fr{\'e}chet lattice $X(\varsigma)$  
to a locally solid lattice $(Y,\tau)$ is Lebesgue.
\end{enumerate}
}
\end{rem} 

\begin{prop}\label{cont are otau-bounded}
Any continuous operator $T$ from a locally solid lattice $(X,\varsigma)$ to a topological vector space $(Y,\tau)$ is $o\tau$-boun\-ded. 
\end{prop}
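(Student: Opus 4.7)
The plan is to reduce the statement to two standard facts: (i) in any locally solid lattice, every order interval is topologically bounded, and (ii) continuous linear maps between topological vector spaces send bounded sets to bounded sets.

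First I would fix $x \in X_+$ and show that $[0,x]$ is $\varsigma$-bounded. Pick an arbitrary $U \in \varsigma(0)$ and choose a solid $V \in \varsigma(0)$ with $V \subseteq U$, which exists since $\varsigma$ is locally solid. By absorption, there is $\lambda_0 > 0$ with $x \in \lambda V$ for all $\lambda \ge \lambda_0$. For any $y \in [0,x]$ we have $|y| \le x$, and since $\lambda V$ is solid (being a positive scalar multiple of the solid set $V$), this forces $y \in \lambda V \subseteq \lambda U$. Hence $[0,x] \subseteq \lambda U$ for all $\lambda \ge \lambda_0$, so $[0,x]$ is $\varsigma$-bounded.

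Next I would invoke the continuity of $T$. Given any $W \in \tau(0)$, by continuity of $T$ at $0$ there is $U \in \varsigma(0)$ with $T(U) \subseteq W$. Applying the previous step to this $U$, one gets $\lambda_0 > 0$ with $[0,x] \subseteq \lambda U$ for all $\lambda \ge \lambda_0$, and then by linearity $T[0,x] \subseteq \lambda T(U) \subseteq \lambda W$ for all such $\lambda$. Since $W$ was arbitrary, $T[0,x]$ is $\tau$-bounded, which is exactly the definition of $o\tau$-boundedness.

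There is no real obstacle here; the only subtlety is making sure the local-solid structure is actually used at the right place, namely to guarantee that the neighborhood $V$ absorbing $x$ also absorbs the whole interval $[0,x]$. Once that is in hand, the rest is a direct transfer of boundedness through the continuous map $T$.
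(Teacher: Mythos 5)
Your proof is correct and follows essentially the same argument as the paper's: choose a solid neighborhood, use absorption of $x$ together with solidity to absorb all of $[0,x]$, and push the resulting boundedness through $T$. The only difference is presentational --- you factor the argument into the two standard facts (order intervals are $\varsigma$-bounded, continuous maps preserve boundedness), while the paper takes $V$ solid inside $T^{-1}(U)$ and runs the same chain in one pass.
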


\begin{proof}
Let $x\in X_+$, $U\in\tau(0)$, and $V$ be a solid $\varsigma$-neighbor\-hood of zero of $X$ with $V\subseteq T^{-1}U$. 
Then there exists $\lambda_0>0$ such that $x\in\lambda V$ for all $\lambda\ge\lambda_0$ and hence $[0,x]\subseteq\lambda V\subseteq\lambda T^{-1}(U)$
for all $\lambda\ge\lambda_0$. Thus $T[0,x]\subseteq\lambda U$ for all $\lambda\ge\lambda_0$, which shows that $T$ is $o\tau$-bounded.
\end{proof}

\noindent
It is worth mentioning here that an $o\tau$-boun\-ded operator $T$ from Dedekind $\sigma$-complete vector lattice $X$ to a normed lattice $Y$ 
is $\sigma$-Lebesgue iff $T$ is order-weakly compact (cf. \cite[Cor.1]{JAM}). The following lemma can be considered as an extension of \cite[Lem.1]{JAM} 
to locally solid lattices.

\begin{lem}\label{PC1}
A positive operator $T$ from a vector  lattice $X$ to a locally solid lattice $(Y,\tau)$ is Lebesgue/$\sigma$-Lebesgue iff $T$ is $o\tau$-continuous/$\sigma{}o\tau$-continuous.
\end{lem}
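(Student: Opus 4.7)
The one direction is immediate from the definitions: since every net with $x_\alpha\downarrow 0$ satisfies $x_\alpha\convo 0$, any $o\tau$-continuous (respectively $\sigma{}o\tau$-continuous) operator is automatically Lebesgue (respectively $\sigma$-Lebesgue). No positivity is needed for this implication, and the $\sigma$-version is identical with sequences in place of nets.

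For the nontrivial direction, suppose $T\ge 0$ is Lebesgue and let $x_\alpha\convo 0$ in $X$. By the definition of order convergence in item $a)$ of the preliminaries, there exists a net $z_\beta\downarrow 0$ in $X$ such that for every $\beta$ there is an index $\alpha_\beta$ with $|x_\alpha|\le z_\beta$ whenever $\alpha\ge\alpha_\beta$. Since $T\ge 0$, this yields
$$
   |Tx_\alpha|\le T|x_\alpha|\le Tz_\beta \qquad (\alpha\ge\alpha_\beta).
$$
Because $T$ is Lebesgue and $z_\beta\downarrow 0$, we have $Tz_\beta\convtau 0$ in $Y$.

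Now I exploit that $(Y,\tau)$ is locally solid. Given $U\in\tau(0)$, choose a solid $V\in\tau(0)$ with $V\subseteq U$. Pick $\beta_0$ so that $Tz_\beta\in V$ for all $\beta\ge\beta_0$, and fix any such $\beta$. Then for $\alpha\ge\alpha_\beta$ the inequality $|Tx_\alpha|\le Tz_\beta\in V$ combined with solidity of $V$ gives $Tx_\alpha\in V\subseteq U$. Thus $Tx_\alpha\convtau 0$, establishing $o\tau$-continuity. For the $\sigma$-Lebesgue case the same argument applies verbatim: if $x_n\convo 0$ is a sequence, the controlling net $z_\beta\downarrow 0$ may be taken to be a sequence as well (standard fact for order convergence of sequences), and the sequence $Tz_\beta\convtau 0$ by the $\sigma$-Lebesgue hypothesis.

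The only potential subtlety is ensuring that solidity of $\tau$ does the right work on $Y$, and that the passage from decreasing nets to $o$-null nets really only requires domination followed by a single Lebesgue application — both are routine. There is no serious obstacle, as the argument reduces order convergence to a domination estimate that the Lebesgue hypothesis can handle directly.
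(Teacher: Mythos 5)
Your proof is correct and follows the same route as the paper's: extract the regulator net $z_\beta\downarrow 0$ from the definition of $x_\alpha\convo 0$, use positivity to get $|Tx_\alpha|\le T|x_\alpha|\le Tz_\beta$, apply the Lebesgue hypothesis to $z_\beta$, and finish with local solidity of $\tau$ (which you spell out via solid neighborhoods, while the paper leaves this last step implicit). The only point deserving care is your parenthetical claim that for a sequence $x_n\convo 0$ the controlling net may be taken to be a sequence: with the net-based definition of order convergence used in this paper that reduction is not automatic in a general (non Dedekind $\sigma$-complete) vector lattice, although the paper itself dispatches the $\sigma$-case with ``similar'' and thus tacitly relies on the same point.
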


\begin{proof}
As the $\sigma$-Lebesgue case is similar, we consider only Lebesgue operators. The sufficiency is routine.
For the necessity, assume $T$ is Lebes\-gue and $x_\alpha\convo 0$ in $X$. Take a net $z_\beta$ in $X$ with $z_\beta\downarrow 0$
such that, for each $\beta$, there exists $\alpha_\beta$ with $|x_\alpha|\le z_\beta$ for $\alpha\ge\alpha_\beta$. It follows from $T\ge 0$ 
that $|Tx_\alpha|\le T|x_\alpha|\le Tz_\beta$ for $\alpha\geq\alpha_\beta$. As $T$ is Lebes\-gue, $Tz_\beta\convtau 0$, which implies $Tx_\alpha\convtau 0$,
because the topology $\tau$ is locally solid.
\end{proof}

\begin{cor}\label{PC2}
An order bounded operator $T$ from a vector lattice to a Dedekind complete locally solid lattice is Lebesgue/$\sigma$-Lebesgue iff $T$ is 
$o\tau$-continuous/$\sigma{}o\tau$-conti\-n\-u\-ous.
\end{cor}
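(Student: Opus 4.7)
The plan is to reduce to Lemma~\ref{PC1} by passing to the modulus of $T$. The backward implication ($o\tau$-continuous $\Rightarrow$ Lebesgue) is immediate, since $x_\alpha \downarrow 0$ is a special case of $x_\alpha \convo 0$.

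For the forward direction, suppose $T$ is order bounded and Lebesgue. Since $Y$ is Dedekind complete and $T \in L_b(X,Y)$, the Riesz--Kantorovich theorem yields $T = T^+ - T^-$ with $T^\pm \in L_b(X,Y)_+$ and $|T| = T^+ + T^-$, together with the pointwise bound $|Tx| \le |T|(|x|)$ for every $x \in X$. The key intermediate step is to show that $|T|$ is Lebesgue whenever $T$ is. Once that is established, Lemma~\ref{PC1} applied to the positive operator $|T|$ gives that $|T|$ is $o\tau$-continuous; then for any $x_\alpha \convo 0$ in $X$ one has $|x_\alpha| \convo 0$, hence $|T|(|x_\alpha|) \convtau 0$, and combining this with $|Tx_\alpha| \le |T|(|x_\alpha|)$ and the local solidity of $\tau$ forces $Tx_\alpha \convtau 0$, so $T$ is $o\tau$-continuous. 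The $\sigma$-Lebesgue/$\sigma{}o\tau$-continuous case is handled identically with sequences in place of nets.

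The main obstacle is the implication ``$T$ Lebesgue $\Rightarrow$ $|T|$ Lebesgue,'' which is the topological counterpart of the classical fact that, for $Y$ Dedekind complete, the order continuous operators form a band of $L_b(X,Y)$. The natural approach is through the Riesz--Kantorovich formula $|T|x_\alpha = \sup\{Tw : |w|\le x_\alpha\}$: for $x_\alpha \downarrow 0$, every witness $w$ in this supremum satisfies $|w|\le x_\alpha \downarrow 0$, so the Lebesgue property of $T$ can be used to control each $Tw$ uniformly as $\alpha$ grows, while Dedekind completeness of $Y$ is used to commute the supremum with the $\tau$-limit. The delicate point is precisely this commutation, since $Y$ itself is not assumed to be Lebesgue; the monotonicity of $T^+x_\alpha$ and $T^-x_\alpha$ and the solidity of $\tau$ must be combined carefully to carry it through.
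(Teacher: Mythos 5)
The backward implication and the final domination step (from $|T|$ being $o\tau$-continuous to $T$ being $o\tau$-continuous via $|Tx_\alpha|\le |T|(|x_\alpha|)$ and solidity of $\tau$) are fine, and reducing to Lemma~\ref{PC1} through the modulus is the natural route; the paper states this corollary without proof as an immediate consequence of Lemma~\ref{PC1}, so any intended argument must pass through essentially this reduction. The problem is that your whole plan rests on the implication ``$T$ Lebesgue $\Rightarrow$ $|T|$ Lebesgue,'' which you identify as the main obstacle but do not prove, and the sketch you offer for it does not go through. The Lebesgue hypothesis on $T$ controls $Tz_\beta$ only along the single monotone net $z_\beta\downarrow 0$; it gives no uniform control of $Tw$ over the whole order interval $\{w: |w|\le z_\beta\}$, which is what the Riesz--Kantorovich formula $|T|z_\beta=\sup\{Tw: |w|\le z_\beta\}$ would require. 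And the interchange of this supremum with the $\tau$-limit is precisely where some order continuity of the topology on $Y$ is needed: Dedekind completeness alone provides no mechanism for passing a $\tau$-limit through a supremum. Contrast this with the order-convergence analogue \cite[Thm.1.56]{AB2}, where the range convergence is order convergence and therefore does interact with the lattice operations.

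The paper itself signals this difficulty: in the discussion near the end of Section~2 on whether $|T|$ inherits these properties, the positive answer for the Lebesgue property is asserted only under the additional hypotheses that $T$ is order continuous and $(Y,\tau)$ is Lebesgue. So the intermediate claim you need is not available in the generality of the corollary, and your argument has a genuine gap at its central step. To close it you would have to either supply a proof that $|T|$ (equivalently $T^{\pm}$) is Lebesgue whenever the order bounded $T$ is --- a statement at least as strong as the corollary itself --- or find an argument that avoids the modulus altogether; as written, the proposal reduces the corollary to an unproven claim of comparable difficulty.
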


Now we investigate some properties of adjoint operators. Recall that, for a locally solid lattice $(X,\varsigma)$, 
the {\em absolute weak topology} $|\sigma|(X^\sim,X)$ on $X^\sim$ is the locally convex-solid topology generated by the collection 
of Riesz seminorms $\{\rho_x(f)=|f|(|x|): x\in X\}$.  The locally solid lattice $(X^\sim,|\sigma|(X^\sim,X))$ is Lebesgue, Levi, 
and Fatou \cite[Prop.81C]{Fre}. We begin with the following technical lemma. 

\begin{lem}\label{dual is cont}
Let $T:(X,\varsigma)\to(Y,\tau)$ be an order bounded continuous operator from a locally solid lattice $(X,\varsigma)$ to a Dedekind complete locally solid lattice $(Y,\tau)$. 
Then the topological adjoint $T':(Y',|\sigma|(Y^\sim,Y))\to(X',|\sigma|(X^\sim,X))$ is also continuous. 
\end{lem}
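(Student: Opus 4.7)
The topology $|\sigma|(X^\sim,X)$ is generated by the Riesz seminorms $\rho_x(f):=|f|(|x|)$ indexed by $x\in X$, and $|\sigma|(Y^\sim,Y)$ analogously by $\rho_y$ for $y\in Y$. To prove continuity of $T'$ it is enough to exhibit, for every $x\in X$, a single $y\in Y_+$ such that
$$
\rho_x(T'y')\;\le\;\rho_y(y')\qquad(\forall\,y'\in Y').
$$
Thus the whole argument reduces to producing one well-chosen dominating element $y$.

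First I check that $T'$ really maps $Y'$ into $X'$: if $y'\in Y'$, then $T'y'=y'\circ T$ is the composition of the $\varsigma\tau$-continuous map $T$ with the $\tau$-continuous functional $y'$, hence $\varsigma$-continuous. Next, since $T$ is order bounded and $Y$ is Dedekind complete, the Riesz--Kantorovich theorem gives the modulus $|T|\in L_b(X,Y)$ with
$$
|T|(|x|)=\sup\{|Tz|:|z|\le|x|\}\in Y_+.
$$
Set $y:=|T|(|x|)$. This is the only step where Dedekind completeness of $Y$ is used, and it is the only substantive point in the argument.

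Now fix $y'\in Y'$. For every $z\in X$ with $|z|\le|x|$ we have $|Tz|\le y$, and the pointwise modulus inequality $|y'(w)|\le|y'|(|w|)$ combined with positivity of $|y'|\in Y^\sim$ yields
$$
|(T'y')(z)|=|y'(Tz)|\le|y'|(|Tz|)\le|y'|(y).
$$
Taking the supremum over all such $z$ and invoking the Riesz--Kantorovich formula $|g|(u)=\sup\{|g(v)|:|v|\le u\}$ for $g\in X^\sim$ and $u\in X_+$,
$$
\rho_x(T'y')=|T'y'|(|x|)=\sup_{|z|\le|x|}|(T'y')(z)|\le|y'|(y)=\rho_y(y').
$$

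This seminorm domination shows that the $T'$-preimage of every basic $|\sigma|(X^\sim,X)$-neighbourhood of $0$ in $X'$ contains a basic $|\sigma|(Y^\sim,Y)$-neighbourhood of $0$ in $Y'$, which is precisely the desired continuity of $T'$. There is no genuine obstacle in this proof; the only care required is in lining up the two Riesz--Kantorovich modulus formulas so that a \emph{single} seminorm on the $Y$-side dominates the given seminorm on the $X$-side, and in remembering to invoke Dedekind completeness of $Y$ at the one place where it is needed, namely to guarantee that $|T|(|x|)$ lies in $Y$ rather than only in some Dedekind completion.
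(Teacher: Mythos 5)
Your proof is correct and follows essentially the same route as the paper: both reduce the claim to the single seminorm estimate $\rho_x(T'y')\le\rho_{|T|(|x|)}(y')$, using Dedekind completeness of $Y$ only to form $|T|$. The paper packages this estimate via the operator inequality $|T'|\le|T|'$, whereas you derive it pointwise from the Riesz--Kantorovich formula for $|T'y'|$; this is a cosmetic difference, not a different argument.
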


\begin{proof}
Indeed, for $f_\alpha\xrightarrow{|\sigma|(Y^\sim,Y)}0$ in $Y^\sim$, it follows from
$$
   \rho_{x}(T'f_\alpha)=\langle |T'f_\alpha|, |x|\rangle \le \langle |T'||f_\alpha|, |x|\rangle\le\langle |T|' |f_\alpha|, |x|\rangle = 
$$
$$
   \langle|f_\alpha|, |T||x|\rangle=|f_\alpha|(|T||x|)=\rho_{|T||x|}(f_\alpha)\to 0 
  \eqno(3) 
$$
that $T'f_\alpha\xrightarrow{|\sigma|(X^\sim,X)}0$. The second inequality in $(3)$ follows from the existence of the modulus of $T$ and from
the observation $\pm T\le|T|\Rightarrow \pm T'\le|T|'$ (cf. \cite[p.67]{AB2}) which implies $|T'|\le|T|'$.
\end{proof}

\begin{theorem}
Let $T: X \to Y$ be an order bounded operator from a vector lattice $X$ to a  Dedekind complete vector lattice $Y$, and 
$T':(Y^\sim,|\sigma|(Y^\sim,Y))\to(X^\sim,|\sigma|(X^\sim,X))$ the correspondent topological adjoint of $T$. Then $T'$ is $o\tau$-bounded, $o\tau$-continuous, Levi, and $KB$.
\end{theorem}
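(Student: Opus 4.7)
My plan is to observe that in this setting $T'$ coincides with the order adjoint $T^\sim: Y^\sim \to X^\sim$, and then harvest the four properties from two ingredients already recorded in the excerpt: (i) $T^\sim$ is order bounded and order continuous (the preliminaries, following \cite[Thm.1.73]{AB2}), and (ii) both $(Y^\sim,|\sigma|(Y^\sim,Y))$ and $(X^\sim,|\sigma|(X^\sim,X))$ are Lebesgue, Levi, and Fatou locally solid lattices (\cite[Prop.81C]{Fre}, cited just before Lemma \ref{dual is cont}). Together these two facts feed each of the four verifications in a parallel manner.

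For \emph{$o\tau$-boundedness}, I would fix $f \in Y^\sim_+$ and note that since $T^\sim$ is order bounded, $T^\sim[0,f]$ is contained in an order interval of $X^\sim$; order intervals in any locally solid lattice are topologically bounded, so $T^\sim[0,f]$ is $|\sigma|(X^\sim,X)$-bounded. For \emph{$o\tau$-continuity}, suppose $f_\alpha \convo 0$ in $Y^\sim$. Order continuity of $T^\sim$ (applied via the standard dominated form of the definition) yields $T^\sim f_\alpha \convo 0$ in $X^\sim$, and then the Lebesgue property of $(X^\sim,|\sigma|(X^\sim,X))$ together with the fact that the topology is locally solid upgrades this to $T^\sim f_\alpha \xrightarrow{|\sigma|(X^\sim,X)} 0$.

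For the \emph{Levi} and \emph{$KB$} properties, let $f_\alpha$ be a $|\sigma|(Y^\sim,Y)$-bounded increasing net in $Y^\sim_+$. Because $(Y^\sim,|\sigma|(Y^\sim,Y))$ is Levi, there exists $f \in Y^\sim$ with $f_\alpha \uparrow f$. Then $f - f_\alpha \downarrow 0$, and order continuity of $T^\sim$ gives $T^\sim(f - f_\alpha) \convo 0$, i.e.\ $T^\sim f_\alpha \convo T^\sim f$ in $X^\sim$, which is exactly the definition of Levi for $T'$. One more application of the Lebesgue property of $(X^\sim,|\sigma|(X^\sim,X))$ then promotes this order convergence to $T^\sim f_\alpha \xrightarrow{|\sigma|(X^\sim,X)} T^\sim f$, yielding the $KB$ property.

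The only mildly delicate point — not really an obstacle — is confirming that the ``topological adjoint'' $T'$ of the statement is literally the order adjoint $T^\sim$: since $T$ is order bounded between vector lattices with $Y$ Dedekind complete, the formula $(T^\# g)(x) = g(Tx)$ restricts to a well-defined map $Y^\sim \to X^\sim$, and this restriction is precisely $T^\sim$; no a priori topology on $X$ or $Y$ is needed for this identification, and once the identification is made the four properties follow in the uniform manner above.
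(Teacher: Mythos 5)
Your proof is correct and, for three of the four properties, follows the paper's own argument essentially verbatim: identify $T'$ with the order adjoint $T^\sim$, use its order continuity (\cite[Thm.1.73]{AB2}) to push $o$-convergence (resp.\ increasing bounded nets, via the Levi property of $(Y^\sim,|\sigma|(Y^\sim,Y))$) through to $X^\sim$, and then invoke the Lebesgue property of $(X^\sim,|\sigma|(X^\sim,X))$ to upgrade order convergence to $|\sigma|(X^\sim,X)$-convergence. The only genuine divergence is in the $o\tau$-boundedness step: the paper derives it by first proving $T'$ is $|\sigma|$-to-$|\sigma|$ continuous (Lemma \ref{dual is cont}) and then applying Proposition \ref{cont are otau-bounded}, whereas you argue directly that $T^\sim[0,f]$ lies in an order interval of $X^\sim$ and order intervals in a locally solid lattice are topologically bounded --- which is exactly Proposition \ref{regular are otau-bounded} applied to $T^\sim$. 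Your route is slightly more economical since it bypasses the computation with moduli in Lemma \ref{dual is cont}; the paper's route has the side benefit of recording the topological continuity of $T'$, which it reuses elsewhere. A small bonus of your formulation of the Levi/KB step is that by writing $T^\sim(f-f_\alpha)\convo 0$ rather than $T'f_\alpha\uparrow T'f$ you avoid even the harmless reduction to $T\ge 0$ that the paper performs.
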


\begin{proof}
The $o\tau$-boundedness of $T'$ follows from Lemma \ref{dual is cont} by Proposition \ref{cont are otau-bounded}.
Since $T$ is regular, without lost of generality, we can suppose $T\ge 0$ (and hence $T'\ge 0$).

Let $f_\alpha\convo 0$ in $Y^\sim$. As $T'$ is order continuous, $T'f_\alpha\convo 0$ in $X^\sim$, and since  $(X^\sim,|\sigma|(X^\sim,X))$ is a Lebesgue lattice, 
we obtain $T'f_\alpha\xrightarrow[]{|\sigma|(X^\sim,X)}0$ and hence $T'$ is $o\tau$-continuous. 

Let $f_\alpha$ be a positive $|\sigma|(Y^\sim,Y)$-bounded increasing net in $Y^\sim$.
Since $(Y^\sim,|\sigma|(Y^\sim,Y))$ is Levi, $f_\alpha\uparrow f$ for some $f\in Y^\sim$. 
As $T'$ is order continuous, $T'f_\alpha\uparrow T'f$ in $X^\sim$, in particular $T'$ is Levi. 
Since the locally solid lattice $(X^\sim,|\sigma|(X^\sim,X))$ is Lebesgue, $T'f_\alpha\xrightarrow[]{|\sigma|(X^\sim,X)}T'f$ and hence 
$T': (Y^\sim,|\sigma|(Y^\sim,Y)) \to (X^\sim,|\sigma|(X^\sim,X))$ is $KB$. 
\end{proof}

The natural embedding $J$ of a vector lattice $X$ to $(X_n^\sim)_n^\sim$ is defined by $(Jx)(f) = f(x)$ for all $f\in X_n^\sim$, $x\in X$.
The mapping $J$ is an order continuous lattice homomorphism (cf. e.g. \cite[Thm.1.70]{AB2}).
By the Nakano theorem (cf. \cite[Thm.1.71]{AB2}), $J$ is one-to-one and onto (i.e. $X$ is {\em perfect})
iff $X_n^\sim$ separates $X$ and whenever a net $x_\alpha$ of $X_+$ satisfies $x_\alpha\uparrow$ and $\sup f(x_\alpha)<\infty$
for each $0\le f\in X_n^\sim$, then there exists some $x\in X$ satisfying $x_\alpha\uparrow x$ in $X$.

\begin{lem}\label{sveta5}
Let $T$ be an order bounded operator from a vector lattice $X$ to a vector lattice $Y$.
The second order adjoint $T^{\sim\sim}$, while restricted to $(X_n^\sim)^\sim$, 
$$
    ((X_n^\sim)^\sim, |\sigma|((X_n^\sim)^\sim,X_n^\sim))\xrightarrow{T^{\sim\sim}}((Y^\sim)^\sim, |\sigma|((Y_n^\sim)^\sim,Y_n^\sim))
$$ 
is continuous. 
\end{lem}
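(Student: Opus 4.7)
The plan is to adapt the proof of Lemma \ref{dual is cont} to the second adjoint, the technical point being that the bounding seminorms must lie in $X_n^\sim$ rather than in the full $X^\sim$.

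First I would make the embedding $(X_n^\sim)^\sim \hookrightarrow (X^\sim)^\sim$ precise via the band projection $P_X : X^\sim \to X_n^\sim$, which exists because $X_n^\sim$ is a band of the Dedekind complete lattice $X^\sim$. Every $\phi\in(X_n^\sim)^\sim$ extends to $\tilde\phi:=\phi\circ P_X\in (X^\sim)^\sim$, and a direct check from the definition of the modulus shows that $|\tilde\phi|$ vanishes on the disjoint complement of $X_n^\sim$ in $X^\sim$, whence
$$
   |\tilde\phi|(g)=|\phi|(P_X g) \qquad (g\in X^\sim_+).
$$
(For $g'$ with $|g'|\le g\in(X_n^\sim)^d_+$ one has $|g'|\in(X_n^\sim)^d$, so $P_X g'=0$ and $\tilde\phi(g')=0$.)

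Then I would repeat the key inequality $(3)$ from Lemma \ref{dual is cont} with $T$ replaced by the order continuous, and in particular order bounded, operator $T^\sim$ of \cite[Thm.1.73]{AB2}. Using the observation $\pm T^\sim\le|T^\sim|\Rightarrow\pm T^{\sim\sim}\le|T^\sim|^\sim$ that yields $|T^{\sim\sim}\tilde\phi|\le|T^\sim|^\sim|\tilde\phi|$, one computes for $\phi\in(X_n^\sim)^\sim$ and $h\in Y_n^\sim$
\begin{align*}
\rho_h(T^{\sim\sim}\tilde\phi)
 &= |T^{\sim\sim}\tilde\phi|(|h|)
 \le |T^\sim|^\sim|\tilde\phi|(|h|)
 = |\tilde\phi|\bigl(|T^\sim||h|\bigr) \\
 &= |\phi|\bigl(P_X|T^\sim||h|\bigr)
 = \rho_{P_X|T^\sim||h|}(\phi),
\end{align*}
where the annihilation property from the first step inserts $P_X$ in the penultimate equality. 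Since $P_X|T^\sim||h|\in X_n^\sim$, the right-hand side is a defining seminorm of $|\sigma|((X_n^\sim)^\sim,X_n^\sim)$, so a net $\phi_\alpha$ that is $|\sigma|((X_n^\sim)^\sim,X_n^\sim)$-null is carried by $T^{\sim\sim}$ to a $|\sigma|((Y_n^\sim)^\sim,Y_n^\sim)$-null net.

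The main obstacle is precisely this bookkeeping: the naive application of $(3)$ produces a seminorm $\rho_{|T^\sim||h|}$ indexed by an element of $X^\sim$ which need not lie in $X_n^\sim$ unless $T$ itself is order continuous. The band projection trick converts it into a seminorm of the correct dual pair, without any hypothesis on $T$ beyond order boundedness.
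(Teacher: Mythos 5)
Your proof is correct and follows essentially the same route as the paper's, namely the chain $|T^{\sim\sim}\phi|(|h|)\le(|T^\sim|^\sim|\phi|)(|h|)=|\phi|(|T^\sim||h|)$ obtained from $|T^{\sim\sim}|\le|T^\sim|^\sim$ and the duality of $|T^\sim|$. The only difference is that you make explicit, via the band projection $P_X$, both the identification of $(X_n^\sim)^\sim$ with a subspace of $(X^\sim)^\sim$ and the reduction of the controlling seminorm to one indexed by $P_X|T^\sim||h|\in X_n^\sim$ --- a bookkeeping point the paper's proof leaves implicit when it pairs $|x_\alpha|\in(X_n^\sim)^\sim$ with $|T^\sim||y|$, which in general lies only in $X^\sim$.
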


\begin{proof}
Let $(X_n^\sim)^\sim\ni  x_\alpha \xrightarrow{|\sigma|((X_n^\sim)^\sim,X_n^\sim)} 0$. 
By use of the fact $|T^{\sim\sim}|\le|T^\sim|^\sim$ it follows
$$ 
   \langle |T^{\sim\sim}x_\alpha|, |y|\rangle \le \langle|T^\sim|^\sim|x_\alpha|, |y|\rangle \le \langle |x_\alpha|,|T^\sim||y|\rangle
   \eqno(4)
$$
for all $y\in Y_n^\sim$.  Since  $x_\alpha \xrightarrow{|\sigma|((X_n^\sim)^\sim, X_n^\sim)} 0$, it follows from $(4)$
that $T^{\sim\sim}x_\alpha \xrightarrow{|\sigma|((Y_n^\sim)^\sim,Y_n^\sim)}0$ and hence the operator
$T^{\sim\sim}$ is $|\sigma|((X_n^\sim)^\sim,X_n^\sim)$ to $|\sigma|((Y_n^\sim)^\sim,Y_n^\sim)$ continuous. 
\end{proof}

\begin{theorem}\label{sveta6}
Let $T$ be an order bounded operator from a vector lattice $X$ to a vector lattice $Y$. Then
$$
  ((X_n^\sim)_n^\sim,|\sigma|((X_n^\sim)_n^\sim,X_n^\sim))\xrightarrow{T^{\sim\sim}}((Y_n^\sim)_n^\sim, |\sigma|((Y_n^\sim)_n^\sim,Y_n^\sim)).
$$ 
is a continuous, Lebesgue, Levi, and $KB$ operator.
\end{theorem}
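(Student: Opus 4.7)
The plan is to repeat, almost verbatim, the argument of the theorem preceding Lemma~\ref{sveta5}, but with $T$ replaced by the order adjoint $T^\sim$ and Lemma~\ref{dual is cont} replaced by Lemma~\ref{sveta5}. The essential input is that $T^\sim:Y^\sim\to X^\sim$ is order continuous (\cite[Thm.1.73]{AB2}), so its order adjoint $T^{\sim\sim}$ is also order continuous. The order continuity of $T^{\sim\sim}$, together with the fact that $X_n^\sim$ is a projection band of $X^\sim$ with an order continuous band projection, is what forces $T^{\sim\sim}$ to restrict to a well-defined operator $(X_n^\sim)_n^\sim\to(Y_n^\sim)_n^\sim$.

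Continuity then follows immediately from Lemma~\ref{sveta5}, since $|\sigma|((X_n^\sim)_n^\sim,X_n^\sim)$ is the restriction of $|\sigma|((X_n^\sim)^\sim,X_n^\sim)$ to the band $(X_n^\sim)_n^\sim$ of $(X_n^\sim)^\sim$, and analogously in the codomain. For the Lebesgue property: if $\phi_\alpha\convo 0$ in $(X_n^\sim)_n^\sim$, then by order continuity $T^{\sim\sim}\phi_\alpha\convo 0$ in $(Y_n^\sim)_n^\sim$; since $((Y_n^\sim)_n^\sim,|\sigma|((Y_n^\sim)_n^\sim,Y_n^\sim))$ is a Lebesgue lattice---inherited from the Lebesgue, Levi, Fatou property of $((Y_n^\sim)^\sim,|\sigma|((Y_n^\sim)^\sim,Y_n^\sim))$ furnished by \cite[Prop.81C]{Fre}, because $(Y_n^\sim)_n^\sim$ is a band of $(Y_n^\sim)^\sim$---we conclude $T^{\sim\sim}\phi_\alpha\xrightarrow{|\sigma|((Y_n^\sim)_n^\sim,Y_n^\sim)}0$.

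For the Levi property, take a $|\sigma|((X_n^\sim)_n^\sim,X_n^\sim)$-bounded increasing net $\phi_\alpha$ in the positive cone of $(X_n^\sim)_n^\sim$. The domain is Levi (by \cite[Prop.81C]{Fre} and band-inheritance), so $\phi_\alpha\uparrow\phi$ for some $\phi\in(X_n^\sim)_n^\sim$; order continuity of $T^{\sim\sim}$ then gives $T^{\sim\sim}\phi_\alpha\uparrow T^{\sim\sim}\phi$ in $(Y_n^\sim)_n^\sim$. The $KB$ property follows by combining Levi in the domain with Lebesgue in the codomain: the net $T^{\sim\sim}\phi_\alpha$ increases to $T^{\sim\sim}\phi$ in order, hence converges to $T^{\sim\sim}\phi$ in $|\sigma|((Y_n^\sim)_n^\sim,Y_n^\sim)$.

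The main obstacle I anticipate is not any of the four individual conclusions, each of which parallels the corresponding step in the proof of the preceding theorem. Rather, it is the bookkeeping required to confirm that $T^{\sim\sim}$ indeed carries $(X_n^\sim)_n^\sim$ into $(Y_n^\sim)_n^\sim$ and that $|\sigma|((X_n^\sim)_n^\sim,X_n^\sim)$ is the restriction of the ambient topology on $(X_n^\sim)^\sim$. Both ultimately rest on the standard fact that $E_n^\sim$ is a projection band of $E^\sim$ with order continuous band projection, combined with the identifications already implicit in Lemma~\ref{sveta5}.
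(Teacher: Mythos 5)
Your proposal is correct and follows essentially the same route as the paper: continuity via Lemma~\ref{sveta5}, then each of the Lebesgue, Levi, and $KB$ properties obtained from the order continuity of $T^{\sim\sim}$ combined with the Lebesgue/Levi structure of $(E_n^\sim,|\sigma|(E_n^\sim,E))$ in the domain and codomain. The extra bookkeeping you supply (that $T^{\sim\sim}$ maps $(X_n^\sim)_n^\sim$ into $(Y_n^\sim)_n^\sim$ via the order continuous band projection, and that the absolute weak topology restricts correctly to the band) is left implicit in the paper but is accurately resolved in your sketch.
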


\begin{proof}
The continuity follows from Lemma \ref{sveta5} by restricting of the second order adjoint $T^{\sim\sim}$ to $(X_n^\sim)_n^\sim$.

Let $x_\alpha\downarrow 0$ in  $(X_n^\sim)_n^\sim$. As $T^{\sim\sim}$ is order continuous, $T^{\sim\sim}x_\alpha\downarrow 0$
in  $(Y_n^\sim)_n^\sim$. Since $((Y_n^\sim)_n^\sim,|\sigma|((Y_n^\sim)_n^\sim,Y_n^\sim))$ is Lebesgue,
$T^{\sim\sim}x_\alpha\xrightarrow{|\sigma|((Y_n^\sim)_n^\sim,Y_n^\sim)} 0$ showing that the operator is Lebesgue. 

Let $x_\alpha$ be a positive increasing $|\sigma|((X_n^\sim)_n^\sim, X_n^\sim)$-bounded net in $(X_n^\sim)_n^\sim$.
Since $((X_n^\sim)_n^\sim,|\sigma|((X_n^\sim)_n^\sim,X_n^\sim))$ is Levi,  the net $x_\alpha$ has a supremum in $(X_n^\sim)_n^\sim$, say $x$. 
Thus $x_\alpha\uparrow x$ and $T^{\sim\sim}x_\alpha\uparrow T^{\sim\sim}x$ as $T^{\sim\sim}$ is order continuous. 
Thus the operator is Levi. 

Since  $((X_n^\sim)_n^\sim,|\sigma|((X_n^\sim)_n^\sim,X_n^\sim))$ is Levi,  
$x_\alpha\uparrow x$ in $(X_n^\sim)_n^\sim$. Then $T^{\sim\sim} x_\alpha\convo {T^\sim}^\sim x$ by order continuity of the operator $T^{\sim\sim}$. 
As  $((Y_n^\sim)_n^\sim,|\sigma|((Y_n^\sim)_n^\sim,Y_n^\sim))$ is a Lebesgue lattice, 
$T^{\sim\sim} x_\alpha\xrightarrow{|\sigma|((Y_n^\sim)_n^\sim,Y_n^\sim)}T^{\sim\sim}x$ and  the operator is $KB$.
\end{proof}

A vector sublattice $Z$ of a vector lattice $X$ is called {\em regular}, if the embedding of $Z$ into $X$ preserves arbitrary suprema and infima. 
Order ideals and order dense vector sublattices of a vector lattice $X$ are regular \cite[Thm.1.23]{AB1}. 

\begin{cor}\label{sveta7}
Let $X^{\sim}=X_n^{\sim}$, and $Y$ be a vector lattice.
Then each order bounded operator $T:X\to(Y, |\sigma|(Y,Y_n^\sim))$ is Lebes\-gue. 
\end{cor}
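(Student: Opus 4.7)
The plan is to reduce the corollary to Theorem~\ref{sveta6} via the natural embedding $J_Y : Y \to (Y_n^\sim)_n^\sim$, which is an order continuous lattice homomorphism whose image carries exactly the seminorms defining $|\sigma|(Y,Y_n^\sim)$.

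First, I would note that the hypothesis $X^\sim = X_n^\sim$ is precisely what ensures that the order adjoint $T^\sim : Y^\sim \to X^\sim$ takes values in $X_n^\sim$, so that its restriction $S := T^\sim|_{Y_n^\sim} : Y_n^\sim \to X_n^\sim$ is well-defined. Taking the order adjoint of $S$ and restricting to the order-continuous double dual produces exactly the operator $T^{\sim\sim} : (X_n^\sim)_n^\sim \to (Y_n^\sim)_n^\sim$ appearing in Theorem~\ref{sveta6}, and that theorem gives that $T^{\sim\sim}$ is Lebesgue from $|\sigma|((X_n^\sim)_n^\sim, X_n^\sim)$ to $|\sigma|((Y_n^\sim)_n^\sim, Y_n^\sim)$.

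Second, I would establish the commutativity $T^{\sim\sim} \circ J_X = J_Y \circ T$, with $J_X, J_Y$ the natural order-continuous lattice homomorphisms into the respective second order-continuous duals. This is a one-line calculation: for $x \in X$ and $g \in Y_n^\sim$, $(T^{\sim\sim}(J_X x))(g) = (J_X x)(T^\sim g) = (T^\sim g)(x) = g(Tx) = (J_Y(Tx))(g)$. Third, I would check the seminorm identity for $J_Y$: since $J_Y$ is a lattice homomorphism, $|J_Y y| = J_Y |y|$, hence for each $g \in Y_n^\sim$, $|J_Y y|(|g|) = (J_Y |y|)(|g|) = |g|(|y|)$. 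Thus $|\sigma|(Y, Y_n^\sim)$ is precisely the topology on $Y$ pulled back from $|\sigma|((Y_n^\sim)_n^\sim, Y_n^\sim)$ through $J_Y$, so convergence of $J_Y(Tx_\alpha)$ to $0$ in the larger topology is equivalent to $Tx_\alpha \to 0$ in $|\sigma|(Y, Y_n^\sim)$.

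Finally, given $x_\alpha \downarrow 0$ in $X$, the order continuity of $J_X$ yields $J_X x_\alpha \downarrow 0$ in $(X_n^\sim)_n^\sim$. Applying Theorem~\ref{sveta6} I obtain $T^{\sim\sim}(J_X x_\alpha) \xrightarrow{|\sigma|((Y_n^\sim)_n^\sim, Y_n^\sim)} 0$, and combining this with the commutative diagram and the seminorm identity of the previous step finishes the argument. I expect the main obstacle to be purely notational: one must verify that the $T^{\sim\sim}$ of Theorem~\ref{sveta6} is literally $S^\sim$ for $S = T^\sim|_{Y_n^\sim}$, which is exactly the point where the hypothesis $X^\sim = X_n^\sim$ is used. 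Once that identification is fixed, the rest is a short diagram chase and does not require any control over $|Tx_\alpha|$ directly in $Y$ (which would be delicate, as $|T|$ need not exist when $Y$ is not Dedekind complete).
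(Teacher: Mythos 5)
Your proof is correct and follows essentially the same route as the paper's: factor $T$ through the natural embeddings into the second order-continuous duals, apply Theorem \ref{sveta6}, and pull the convergence back to $Y$. You are in fact somewhat more explicit than the paper about the identity $T^{\sim\sim}\circ J_X=J_Y\circ T$ (where the hypothesis $X^\sim=X_n^\sim$ enters) and about why $|\sigma|(Y,Y_n^\sim)$ is exactly the pullback of $|\sigma|((Y_n^\sim)_n^\sim,Y_n^\sim)$ along the lattice homomorphism $J_Y$.
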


\begin{proof}
Let $x_\alpha\downarrow 0$ in $X$. Since the natural embedding $J:X\to X^{\sim\sim}$ is one-to-one and $J(X)$ is 
a regular sublattice of $X^{\sim\sim}$ (cf. \cite[Thm.1.67]{AB1}), then $T=T^{\sim\sim}\circ J$ and
$Jx_\alpha\downarrow 0$ in $X^{\sim\sim}$. Thus $Tx_\alpha=T^{\sim\sim}(Jx_\alpha)\xrightarrow{|\sigma|((X_n^\sim)_n^\sim,X_n^\sim)}0$
since $T^{\sim\sim}$ is Lebesgue by Theorem \ref{sveta6}. 
\end{proof}

\medskip
We also mention the following question. Let $T$ be Lebes\-gue, $o\tau$-continuous, $o\tau$-bounded, $o\tau$-compact, $KB$, or Levi. Does $T^{\sim\sim}$ satisfy the same property?
The answer is negative,  when $T$ maps a Banach lattice $X$ to a Banach lattice $Y$, in the following cases: 
\begin{enumerate}
\item[$(*)$] \  
for Lebesgue and for $o\tau$-continuous operators, since the identity $I:c_0\to c_0$ is $o\tau$-continuous yet its second order adjoint $I:\ell_\infty\to\ell_\infty$ is not even Lebesgue; 
\item[$(**)$] \  
for $o\tau$-compact operators, since the natural embedding $J:c_0\to\ell_\infty$ is $o\tau$-compact but $J^{\sim\sim}:\ell_\infty\to\ell^{**}_\infty$ is not $o\tau$-compact.
\end{enumerate} 

Next we discuss the domination properties of positive operators. Let $T$ and $S$ be positive operators between vector lattices $X$ and $Y$ satisfying $0\le S\le T$. 

\begin{quest}\label{dominated Lebesgue}
{\em 
When does the assumption that $T$ is Lebes\-gue, $o\tau$-continuous, $o\tau$-bounded, $o\tau$-compact, $KB$, or Levi imply that $S$ has the same property?
}
\end{quest}

\noindent
Question \ref{dominated Lebesgue} has positive answers in the following cases: 
\begin{enumerate}
\item[$<a>$] \
Trivially, for Lebesgue; $\sigma$-Lebesgue; and $o\tau$-bounded operators from a vector lattice to a locally solid lattice.
\item[$<b>$] \
For $o\tau$-compact operators from a vector lattice $X$ to a locally convex Lebesgue lattice $(Y,\tau)$ which is either Dede\-kind complete or $\tau$-complete by Theorem \ref{Thm.5.10 of AB2}.
\item[$<c>$] \
For $o|\sigma|(Y,Y')$-compact operators from a Banach lattice $X$ to a Banach lattice $Y$ \cite[Thm.5.11]{AB2}.
\item[$<d>$] \
For $KB$/$\sigma$-$KB$ lattice homomorphisms between locally solid lattices by Corollary \ref{KB lattice homomorphism dominated property}.
\item[$<e>$] \
For quasi $KB$ operators between locally solid lattices by Theorem \ref{quasi $KB$ dominated property}.
\item[$<f>$] \
For quasi Levi operators from a locally solid lattice to a vector lattice by Theorem \ref{quasi Levi dominated property}.
\end{enumerate}

We remind that the modulus $|T|$ of an order bounded disjointness preserving operator $T$ between vector lattices $X$ and $Y$
exists and satisfies $|T||x|=|T|x||=|Tx|$ for all $x\in X$ \cite[Thm.2.40]{AB2};
there exist lattice homomorphisms $R_1,R_2:X\to Y$ with $T = R_1-R_2$ (cf. \cite[Exer.1, p.130]{AB2}. 

\begin{theorem}\label{KB disj pres dominated property}
Let $T$ be an order bounded disjointness preserving $KB$/$\sigma$-$KB$ operator between locally solid lattices $(X,\varsigma)$ and $(Y,\tau)$.
If $|S|\le|T|$ then $S$ is $KB$/$\sigma$-$KB$.
\end{theorem}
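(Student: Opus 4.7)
The plan is to exploit the identity $|Tz|=|T||z|$, which holds for any order bounded disjointness preserving operator $T$ by \cite[Thm.2.40]{AB2}, together with the general inequality $|Sz|\le|S||z|$ (valid because the hypothesis $|S|\le|T|$ presupposes existence of $|S|$), and the solidity of a zero base of $\tau$. These three ingredients let me dominate the behavior of $Sx_\alpha$ by the already granted behavior of $Tx_\alpha$.

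Concretely, given a $\varsigma$-bounded increasing net $x_\alpha$ in $X_+$, I would invoke the $KB$-property of $T$ to pick $x\in X$ with $Tx_\alpha\convtau Tx$. Since $\tau$ is locally solid, this is equivalent to $|Tx_\alpha-Tx|\convtau 0$. The key chain of estimates is
\[
|Sx_\alpha-Sx|=|S(x_\alpha-x)|\le|S||x_\alpha-x|\le|T||x_\alpha-x|=|T(x_\alpha-x)|=|Tx_\alpha-Tx|,
\]
where the first inequality uses existence of $|S|$, the second uses $|S|\le|T|$, and the last two equalities use that $T$ is order bounded and disjointness preserving. Solidity then forces $|Sx_\alpha-Sx|\convtau 0$, hence $Sx_\alpha\convtau Sx$. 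The witness $x$ for $S$ is precisely the one already furnished by $T$, so no separate existence argument (such as completeness of $(Y,\tau)$) is needed; this is why the statement yields genuine $KB$ rather than merely the quasi-$KB$ property. The $\sigma$-$KB$ case is proved by the same argument with sequences in place of nets.

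I do not foresee a real obstacle. The only delicate ingredient is the modulus identity $|Tz|=|T||z|$, but this is exactly the payoff of the disjointness preserving hypothesis and is therefore available without additional work. Notice that the decomposition $T=R_1-R_2$ into lattice homomorphisms mentioned before the theorem is not required; the modulus identity alone suffices.
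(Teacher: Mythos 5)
Your proposal is correct and takes essentially the same route as the paper: the paper's proof consists of exactly the chain $|S(x_\alpha-x)|\le|S||x_\alpha-x|\le|T||x_\alpha-x|=|Tx_\alpha-Tx|\convtau 0$, using the witness $x$ supplied by the $KB$-property of $T$ and the local solidity of $\tau$. Your observation that the decomposition $T=R_1-R_2$ into lattice homomorphisms is not needed also matches the paper, which states that fact only as background and does not invoke it in the proof.
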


\begin{proof}
Take a $\varsigma$-bounded increasing net/sequence $x_\alpha$ in $X_+$. Then $T(x_\alpha-x)\convtau 0$ for some $x\in X$. It follows from
$$
   |S(x_\alpha-x)|\le|S||x_\alpha-x|\le|T||x_\alpha-x|=|Tx_\alpha-Tx|\convtau 0
$$ 
that $Sx_\alpha\convtau Sx$, as desired.
\end{proof}

Since $0\le S\le T$ with a lattice homomorphism $T$ implies that $S$ is also a lattice homomorphism, 
the next result is a direct consequence of  Theorem \ref{KB disj pres dominated property}.

\begin{cor}\label{KB lattice homomorphism dominated property}
Let $T$ be a $KB$/$\sigma$-$KB$ lattice homomorphism between locally solid lattices $(X,\varsigma)$ and $(Y,\tau)$.
Then each $S$ satisfying  $0\le S\le T$ is also a $KB$/$\sigma$-$KB$ lattice homomorphism.
\end{cor}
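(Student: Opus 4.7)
The plan is to reduce the statement directly to Theorem \ref{KB disj pres dominated property}. The first step is to establish the preliminary lattice-theoretic fact invoked in the sentence preceding the corollary: that whenever $0\le S\le T$ and $T$ is a lattice homomorphism, $S$ is a lattice homomorphism as well. Since $S$ is positive, it suffices to verify that $S$ is disjointness preserving. For $x,y\in X_+$ with $x\wedge y=0$, the chain
\begin{equation*}
0\le Sx\wedge Sy\le Tx\wedge Ty=T(x\wedge y)=0
\end{equation*}
uses positivity of $S$, the domination $S\le T$, and the fact that $T$ is a lattice homomorphism; therefore $Sx\wedge Sy=0$, as required.

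The second step is to check that Theorem \ref{KB disj pres dominated property} applies to $S$. The operator $S$ is positive, hence order bounded (it maps $[-x,x]$ into $[-Sx,Sx]$ for each $x\in X_+$); by the first step it is a lattice homomorphism, so in particular it preserves disjointness. Moreover, since $S,T\ge 0$, we have $|S|=S\le T=|T|$, which is the domination hypothesis of Theorem \ref{KB disj pres dominated property}.

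Applying that theorem with the disjointness preserving $KB$/$\sigma$-$KB$ operator $T$ and the dominated operator $S$, we conclude that $S$ is a $KB$/$\sigma$-$KB$ operator. Together with the first step, this yields that $S$ is a $KB$/$\sigma$-$KB$ lattice homomorphism. There is no real obstacle here: once the elementary disjointness-preservation observation is made, the corollary follows as a one-line application of Theorem \ref{KB disj pres dominated property}.
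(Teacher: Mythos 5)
Your proof is correct and follows exactly the route the paper takes: the paper's entire argument is the sentence preceding the corollary, namely that $0\le S\le T$ with $T$ a lattice homomorphism forces $S$ to be a lattice homomorphism, after which Theorem \ref{KB disj pres dominated property} applies with $|S|=S\le T=|T|$. You merely supply the standard disjointness computation $0\le Sx\wedge Sy\le Tx\wedge Ty=T(x\wedge y)=0$ that the paper leaves implicit, so there is nothing to add.
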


\begin{theorem}\label{quasi $KB$ dominated property}
Let $T$ be a positive quasi $KB$ operator  between locally solid lattices $(X,\varsigma)$ and $(Y,\tau)$.
Then each operator $S:X\to Y$ with $0\le S\le T$ is also quasi $KB$.
\end{theorem}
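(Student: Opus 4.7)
The plan is to reduce the Cauchy condition for $Sx_\alpha$ to the one already granted for $Tx_\alpha$ by exploiting the locally solid structure of $(Y,\tau)$ together with the monotonicity of $T$ and $S$.

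First I would fix an arbitrary $\varsigma$-bounded increasing net $x_\alpha$ in $X_+$ and, since we want $Sx_\alpha$ to be $\tau$-Cauchy, pick an arbitrary $U\in\tau(0)$. Using the fact that $\tau$ is locally solid, I would replace $U$ by a solid $V\in\tau(0)$ with $V\subseteq U$. Because $T$ is quasi $KB$ and $x_\alpha$ is $\varsigma$-bounded and increasing in $X_+$, the net $Tx_\alpha$ is $\tau$-Cauchy, so there exists $\alpha_0$ with $Tx_\alpha-Tx_\beta\in V$ whenever $\alpha,\beta\ge\alpha_0$.

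Next, for indices $\alpha\ge\beta\ge\alpha_0$ I would use the fact that $x_\alpha-x_\beta\in X_+$. From $0\le S\le T$ this yields $0\le S(x_\alpha-x_\beta)\le T(x_\alpha-x_\beta)$, i.e., $|Sx_\alpha-Sx_\beta|\le|Tx_\alpha-Tx_\beta|$. Since $V$ is solid and $Tx_\alpha-Tx_\beta\in V$, it follows that $Sx_\alpha-Sx_\beta\in V\subseteq U$. The case $\beta\ge\alpha\ge\alpha_0$ is symmetric. As $U\in\tau(0)$ was arbitrary, $Sx_\alpha$ is $\tau$-Cauchy, establishing that $S$ is quasi $KB$.

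There is essentially no obstacle here beyond assembling the pieces: the argument is a straightforward application of local solidness of $\tau$ combined with the elementary inequality $|Sx_\alpha-Sx_\beta|\le|Tx_\alpha-Tx_\beta|$ on the positive differences of a monotone net. The only point worth flagging is the necessity of passing to a solid base of neighborhoods before invoking the Cauchy property of $Tx_\alpha$; without this step one cannot transfer the containment from $T$ to $S$.
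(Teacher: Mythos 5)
Your overall strategy is the right one and matches the paper's, but there is a genuine gap in the final step. You verify $Sx_\alpha-Sx_\beta\in V$ only for \emph{comparable} pairs of indices ($\alpha\ge\beta\ge\alpha_0$, and symmetrically), and then conclude the net is $\tau$-Cauchy. The index set of a net is merely directed, not totally ordered, so there may be pairs $\alpha,\beta\ge\alpha_0$ with neither $\alpha\ge\beta$ nor $\beta\ge\alpha$; for such a pair $x_\alpha-x_\beta$ need not lie in $X_+$ or $-X_+$, the inequality $|Sx_\alpha-Sx_\beta|\le|Tx_\alpha-Tx_\beta|$ is no longer justified by positivity of the difference, and solidity of $V$ cannot be invoked. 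Your claim that ``the case $\beta\ge\alpha\ge\alpha_0$ is symmetric'' therefore does not exhaust all cases required by the definition of a Cauchy net.

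The repair is short but requires exactly the ingredient you omitted: choose the solid $V\in\tau(0)$ with $V-V\subseteq U$ rather than merely $V\subseteq U$. Then for arbitrary $\alpha,\beta\ge\alpha_0$ write $Sx_\alpha-Sx_\beta=S(x_\alpha-x_{\alpha_0})-S(x_\beta-x_{\alpha_0})$; each summand involves a positive difference (since the net increases), so your solidity argument applies to each and gives $Sx_\alpha-Sx_\beta\in V-V\subseteq U$. This is precisely how the paper's proof proceeds: it compares every term to the single index $\alpha_0$ and absorbs the resulting two-term estimate into $V-V\subseteq U$. Note that solidity alone does not give $V-V\subseteq V$ (in a normed lattice the $\varepsilon$-ball is solid but $V-V$ is the $2\varepsilon$-ball), so the shrinking of the neighborhood cannot be skipped.
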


\begin{proof}
Let $0\le S\le T$ and $x_\alpha$ be an increasing $\varsigma$-bounded net in $X_+$. Since $T\ge 0$ then $Tx_\alpha\uparrow$.
Since $T$ is quasi $KB$ then $Tx_\alpha$ is $\tau$-Cauchy. Take a $U\in\tau(0)$ and a solid neighborhood $V\in\tau(0)$
with $V-V\subseteq U$. There exists $\alpha_0$ satisfying $T(x_\alpha-x_\beta)\in V$ for all $\alpha,\beta\ge\alpha_0$. 
In particular, $T(x_\alpha-x_{\alpha_0})\in V$ for all $\alpha\ge\alpha_0$. Since $0\le S\le T$ and $V$ is solid,
$S(x_\alpha-x_{\alpha_0})\in V$ for all $\alpha\ge\alpha_0$. Thus, we obtain
$$
  Sx_\alpha-Sx_\beta=S(x_\alpha-x_{\alpha_0})-S(x_\beta-x_{\alpha_0})\in V-V\subseteq U
  \eqno(5)
$$ 
for all $\alpha,\beta\ge\alpha_0$. Since $U\in\tau(0)$ was taken arbitrary, $(5)$ implies that $Sx_\alpha$ is also $\tau$-Cauchy.
\end{proof}

\begin{theorem}\label{quasi Levi dominated property}
Let $T$ be a positive quasi Levi operator from a locally solid lattice $(X,\varsigma)$ to a vector lattice $Y$.
Then each operator $S:X\to Y$ with $0\le S\le T$ is also quasi  Levi.
\end{theorem}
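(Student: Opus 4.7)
The plan is to mimic the proof of Theorem \ref{quasi $KB$ dominated property}, replacing its topological Cauchy argument by an order-theoretic one and exploiting the positivity of $S$ and $T$ to turn two-sided estimates into one-sided ones.

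Let $x_\alpha$ be a $\varsigma$-bounded increasing net in $X_+$. Since $T\ge 0$ is quasi Levi, $Tx_\alpha$ is an increasing $o$-Cauchy net in $Y$. Unpacking what it means to be $o$-Cauchy, there exist a net $z_\gamma\downarrow 0$ in $Y$ and, for each $\gamma$, an index $\alpha_\gamma$ such that $|Tx_{\alpha'}-Tx_\alpha|\le z_\gamma$ whenever $\alpha,\alpha'\ge\alpha_\gamma$.

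The key observation is that the monotonicity of $Tx_\alpha$ (guaranteed by $T\ge 0$ and $x_\alpha\uparrow$) lets us rewrite this as a one-sided estimate: for $\alpha'\ge\alpha\ge\alpha_\gamma$,
$$
   0\le T(x_{\alpha'}-x_\alpha)=Tx_{\alpha'}-Tx_\alpha\le z_\gamma.
$$
Since $x_{\alpha'}-x_\alpha\ge 0$ and $0\le S\le T$, applying $S$ to this positive difference gives $0\le S(x_{\alpha'}-x_\alpha)\le T(x_{\alpha'}-x_\alpha)\le z_\gamma$. Because $S\ge 0$ the net $Sx_\alpha$ is itself increasing, hence $|Sx_{\alpha'}-Sx_\alpha|\le z_\gamma$ for all $\alpha,\alpha'\ge\alpha_\gamma$ (swapping $\alpha$ and $\alpha'$ if needed). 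Therefore $Sx_\alpha$ is $o$-Cauchy with the same controlling net $z_\gamma\downarrow 0$, proving that $S$ is quasi Levi.

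There is no real obstacle here: the proof is a direct analogue of Theorem \ref{quasi $KB$ dominated property}, with the role of a solid $\tau$-neighborhood $V$ played by the positive element $z_\gamma$, and the solidity argument replaced by the fact that $0\le S\le T$ preserves inequalities between positive elements.
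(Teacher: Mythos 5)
Your proof takes the same route as the paper's: extract a controlling net $z_\gamma\downarrow 0$ witnessing that $Tx_\alpha$ is $o$-Cauchy, squeeze $S$ between $0$ and $T$ on positive differences, and conclude that $Sx_\alpha$ is $o$-Cauchy with the same controlling net. The only step that is not literally valid as written is the final ``swapping $\alpha$ and $\alpha'$ if needed'': the index set is merely directed, so two indices $\alpha,\alpha'\ge\alpha_\gamma$ need not be comparable, and your one-sided estimate $0\le Sx_{\alpha'}-Sx_\alpha\le z_\gamma$ is only established when $\alpha'\ge\alpha$. The paper closes this by anchoring both indices at the fixed $\alpha_\gamma$: for arbitrary $\alpha_1,\alpha_2\ge\alpha_\gamma$ one has $Sx_{\alpha_1}-Sx_{\alpha_2}\le S(x_{\alpha_1}-x_{\alpha_\gamma})\le T(x_{\alpha_1}-x_{\alpha_\gamma})\le z_\gamma$, and symmetrically with $\alpha_1$ and $\alpha_2$ interchanged, which yields the two-sided bound for possibly incomparable pairs. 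Alternatively, the monotonicity of $Sx_\alpha$ that you already noted repairs the step directly: choose a common upper bound $\alpha''\ge\alpha,\alpha'$ and use $Sx_{\alpha'}-Sx_\alpha\le Sx_{\alpha''}-Sx_\alpha\le z_\gamma$. With either one-line fix your argument is complete and coincides in substance with the paper's.
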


\begin{proof}
Let $T:X\to Y$ be quasi Levi and $x_\alpha$ be a positive $\varsigma$-bounded increasing net in $(X, \varsigma)$.
Then $Tx_\alpha$ is $o$-Cauchy in $Y$. So, there exists a net $z_\beta\downarrow$ in $Y$ such that, for each $\beta$,
there exists $\alpha_\beta$ such that $|Tx_{\alpha_1}-Tx_{\alpha_2}|\le z_\beta$ for all $\alpha_1, \alpha_2\ge \alpha_\beta$.
Choosing $\alpha_1, \alpha_2\ge \alpha_\beta$ for a fixed $\alpha_\beta$ we have
$$
   Sx_{\alpha_1}-Sx_{\alpha_2}\le S(x_{\alpha_1}-x_{\alpha_\beta})\le 
   T(x_{\alpha_1}-x_{\alpha_\beta}) \le z_\beta,
$$
and similarly
$$
   Sx_{\alpha_2}-Sx_{\alpha_1}\le S(x_{\alpha_2}-x_{\alpha_\beta})\le 
   T(x_{\alpha_2}-x_{\alpha_\beta}) \le z_\beta .
$$
Thus $|Sx_{\alpha_1}-Sx_{\alpha_2}|\le z_\beta$ for all $\alpha_1, \alpha_2\ge \alpha_\beta$ showing that the operator $S$ is also quasi Levi.
\end{proof}

Any order dense sublattice is regular by \cite[Thm.1.27]{AB1}.
Also a locally solid lattice $(X,\varsigma)$ is 
a regular sublattice of its $\varsigma$-completion $\hat{X}$ iff every $\varsigma$-Cauchy net $x_\alpha$ in $X_+$ such that $x_\alpha\convo 0$ in $X$ 
satisfies $x_\alpha\convvars 0$ \cite[Thm.2.41]{AB1}.

\begin{lem}\label{sveta1}
Let $Z$ be a regular sublattice of a vector lattice $X$ and $T$ be an $o\tau$-continuous operator  from $X$ to a topological vector space $(Y,\tau)$.
Then  the restriction $T|_Z: Z\to Y$ is also $o\tau$-continuous.
\end{lem}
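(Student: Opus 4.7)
The plan is to reduce the claim to the key property of regular sublattices, namely that suprema and infima of nets computed in $Z$ agree with those computed in $X$. Once this is observed, the definition of order convergence transfers straight from $Z$ to $X$, and then we simply invoke the hypothesis on $T$.

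More concretely, I would take an arbitrary net $z_\alpha \convo 0$ in $Z$ and unpack this according to the definition given in the introduction: there exists a net $w_\beta \downarrow 0$ in $Z$ such that, for each $\beta$, some $\alpha_\beta$ satisfies $|z_\alpha| \le w_\beta$ in $Z$ whenever $\alpha \ge \alpha_\beta$. The only real verification is that $w_\beta \downarrow 0$ also holds in $X$. This is exactly the content of regularity: the embedding $Z \hookrightarrow X$ preserves arbitrary infima, so $\inf_\beta w_\beta$ computed in $X$ coincides with $\inf_\beta w_\beta = 0$ computed in $Z$. Since the lattice operations $|\cdot|$ in $Z$ and in $X$ agree on elements of $Z$ (again by the regular sublattice property, or even just because $Z$ is a sublattice), the inequality $|z_\alpha| \le w_\beta$ persists in $X$.

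Combining these observations, the same net $w_\beta$ witnesses $z_\alpha \convo 0$ in $X$. Applying the $o\tau$-continuity hypothesis on $T$ then yields $T z_\alpha \convtau 0$ in $Y$, which is exactly $(T|_Z) z_\alpha \convtau 0$. Since the net was arbitrary, $T|_Z$ is $o\tau$-continuous.

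I do not anticipate any real obstacle; the only subtle point is making sure that the witnessing decreasing net in the definition of order convergence can be taken inside $Z$ and then transferred to $X$ via regularity, rather than needing to produce a new witness in $X$ from scratch. No additional machinery (for example, from the locally solid lattice theory) is required, which is why the lemma is stated for an arbitrary topological vector space $(Y,\tau)$.
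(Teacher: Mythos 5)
Your proof is correct and follows essentially the same route as the paper: reduce everything to the fact that, for a regular sublattice, $z_\alpha\convo 0$ in $Z$ implies $z_\alpha\convo 0$ in $X$, and then apply the definition of $o\tau$-continuity. The only difference is that the paper cites this transfer principle from the literature (Lemma 2.5 of Gao--Troitsky--Xanthos), whereas you verify it directly by showing the witnessing decreasing net in $Z$ still works in $X$ via preservation of infima.
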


\begin{proof}
Under the assumptions of the lemma, for each net $z_\alpha$ in $Z$, $z_\alpha\convo 0$ in $X$ if $z_\alpha\convo 0$ in $Z$, by \cite[Lem.2.5]{GTX}.
Therefore the result follows directly from the definition of an $o\tau$-con\-ti\-nuous operator.
\end{proof}

\noindent
Similarly for $uo$-null nets. Let $Z$ be a regular sublattice of $X$ then $x_\alpha\convuo 0$ in $Z$ iff $x_\alpha\convuo 0$ in $X$
by \cite[Thm.3.2]{GTX}, and hence, for a $uo\tau$-continuous operator $T$ from a vector lattice $X$ to a topological vector space $(Y,\tau)$, 
the restriction $T|_Z: Z\to Y$ is also $uo\tau$-continuous.

\begin{lem}\label{sveta2}
Let $T$ be a positive Lebesgue operator  from a vector lattice $X$ to a locally solid lattice $(Y,\tau)$.
If $Z$ is a regular sublattice of $X$, then the restriction $T|_Z: Z\to Y$ is $o\tau$-continu\-ous.
\end{lem}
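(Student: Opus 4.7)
The plan is straightforward: combine the two preceding lemmas. By Lemma \ref{PC1}, a positive operator from a vector lattice to a locally solid lattice is Lebesgue if and only if it is $o\tau$-continuous; applied to $T$ this gives that $T: X \to Y$ is $o\tau$-continuous on the whole domain $X$.

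Next, since $Z$ is a regular sublattice of $X$, Lemma \ref{sveta1} applies to the $o\tau$-continuous operator $T$ and produces that the restriction $T|_Z : Z \to Y$ is $o\tau$-continuous. This is precisely the desired conclusion.

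The only subtlety worth noting explicitly is why positivity is essential: we use it exclusively to invoke Lemma \ref{PC1}, which in turn rests on the standard sandwiching $|Tx_\alpha| \le T|x_\alpha| \le Tz_\beta$ for a regulator $z_\beta\downarrow 0$ dominating $|x_\alpha|$ eventually, combined with local solidness of $\tau$. Note that we do \emph{not} need the stronger conclusion that $T|_Z$ is Lebesgue as an operator on $Z$ (which would require transferring decreasing order-null nets from $Z$ to $X$, an asymmetric requirement compared to the symmetric behaviour of $o$-convergence under regular embeddings \cite[Lem.2.5]{GTX}). In fact this is exactly why the statement is phrased in terms of $o\tau$-continuity rather than the Lebesgue property: regularity of $Z$ in $X$ preserves $o$-null nets in both directions, so $o\tau$-continuity restricts cleanly, whereas the one-sided ``decreasing to $0$'' condition in the definition of Lebesgue operator does not restrict as directly.

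No genuine obstacle is expected; the lemma is essentially a two-line consequence of Lemmas \ref{PC1} and \ref{sveta1}, and its role in the paper is presumably to package this combination for convenient reference in the sequel.
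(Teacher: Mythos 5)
Your proof is correct and is exactly the paper's own argument: apply Lemma \ref{PC1} to upgrade the positive Lebesgue operator $T$ to an $o\tau$-continuous one, then invoke Lemma \ref{sveta1} for the regular sublattice $Z$. No differences worth noting.
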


\begin{proof}
$T$ is $o\tau$-continuous by Lemma \ref{PC1}. It follows from Lemma \ref{sveta1} that  $T|_Z: Z\to Y$ is $o\tau$-continu\-ous.
\end{proof}

\noindent
Since a vector lattice $X$ is order dense in $X^\delta$, and hence is regular in $X^\delta$, the next result follows immediately from Lemma \ref{sveta2}.

\begin{prop}\label{sveta3}
If an operator $T$ from the Dedekind completion $X^\delta$ of a vector lattice $X$ to  a locally solid lattice $(Y,\tau)$ is Lebesgue then  the operator $T|_X$ is $o\tau$-continu\-ous.
\end{prop}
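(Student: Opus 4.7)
The plan is to observe that this is an almost immediate corollary of Lemma \ref{sveta2} once one identifies $X$ as a regular sublattice of its Dedekind completion $X^\delta$. So the whole proof consists of applying Lemma \ref{sveta2} with the sublattice $Z = X \subseteq X^\delta$, provided we supply the missing hypothesis that this inclusion is regular (and, implicitly, that $T$ is taken positive, since Lemma \ref{sveta2} is stated for positive $T$; this is indicated by the authors' claim that the proposition follows ``immediately'' from Lemma \ref{sveta2}).

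First I would recall the defining property of the Dedekind completion: $X$ sits inside $X^\delta$ as an \emph{order dense} vector sublattice. Then I would invoke the already-cited fact \cite[Thm.1.23]{AB1} (quoted in the paragraph immediately before Lemma \ref{sveta1}) that any order dense sublattice of a vector lattice is regular. Thus $X$ is a regular sublattice of $X^\delta$.

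With this in place, the result is an immediate application of Lemma \ref{sveta2} to the positive Lebesgue operator $T: X^\delta \to (Y,\tau)$ and the regular sublattice $Z := X$: the restriction $T|_X : X \to Y$ is $o\tau$-continuous.

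There is no real obstacle here; the only subtle point is the implicit assumption of positivity on $T$ (needed to trigger Lemma \ref{PC1} inside the proof of Lemma \ref{sveta2}, which translates ``Lebesgue'' into ``$o\tau$-continuous'' before the restriction argument runs). If one wished to drop positivity, one would have to replace Lemma \ref{sveta2} by a direct argument that $o\tau$-continuity transfers from $X^\delta$ to $X$ via regularity (using Lemma \ref{sveta1}), but this would require assuming $T$ to be $o\tau$-continuous on $X^\delta$ in the first place, which is strictly stronger than Lebesgue in the non-positive case.
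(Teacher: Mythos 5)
Your proof is correct and follows exactly the paper's route: the authors likewise derive the proposition from Lemma \ref{sveta2} by observing that $X$ is order dense, hence regular, in its Dedekind completion $X^\delta$. Your remark about the implicit positivity hypothesis is well taken, since Lemma \ref{sveta2} is indeed stated for positive $T$ while the proposition's statement omits this assumption.
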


Each order continuous lattice homomorphism $T$ from a Fatou lattice $(X,\varsigma)$ to a Lebesgue lattice $(Y,\tau)$ is $\varsigma\tau$-continuous on order intervals of $X$ by \cite[Thm.4.23]{AB1}. 
Since $T$ is also Lebesgue under the given conditions, the following can be considered as a generalization of \cite[Thm.4.23]{AB1}.

\begin{theorem}\label{restricted to order bounded subsets}
Let $(X,\varsigma)$ be a Fatou lattice, $(Y,\tau)$ a Lebes\-gue lattice, and $T:X\to Y$ a Lebesgue lattice homomorphism.  
Then $T$ is $\varsigma\tau$-continuous when restricted to order bounded subsets in $X$.
\end{theorem}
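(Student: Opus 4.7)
My plan is to observe that under the stated hypotheses the Lebesgue assumption on $T$ already yields order continuity of $T$, which then reduces the theorem to a direct citation of \cite[Thm.4.23]{AB1} as recalled immediately before the statement.

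The first step will be to promote ``$T$ is Lebesgue'' to ``$T$ is order continuous''. Since $T$ is a lattice homomorphism, $T$ is positive. Given $x_\alpha\downarrow 0$ in $X$, the net $Tx_\alpha$ is positive and decreasing in $Y$, and by the Lebesgue hypothesis $Tx_\alpha\convtau 0$. To confirm $Tx_\alpha\downarrow 0$ in $Y$, I will check $\inf_\alpha Tx_\alpha=0$: for any $w\in Y$ with $w\le Tx_\alpha$ for all $\alpha$, the net $Tx_\alpha-w\ge 0$ satisfies $Tx_\alpha-w\convtau -w$, and since $(Y,\tau)$ is Hausdorff and locally solid, the positive cone $Y_+$ is $\tau$-closed (the negative-part map is $\tau$-continuous by local solidness, and Hausdorffness forces the limit to lie in $Y_+$). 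Hence $-w\in Y_+$, i.e.\ $w\le 0$. Combined with $0\le Tx_\alpha$, this gives $\inf_\alpha Tx_\alpha=0$, so $Tx_\alpha\downarrow 0$ in $Y$, meaning $T$ is order continuous.

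The second step is to finish by invoking \cite[Thm.4.23]{AB1} (quoted in the paragraph preceding the statement): the order continuous lattice homomorphism $T$ from the Fatou lattice $(X,\varsigma)$ to the Lebesgue lattice $(Y,\tau)$ is $\varsigma\tau$-continuous on order intervals of $X$. Every order bounded subset of $X$ sits inside some order interval $[-u,u]$ with $u\in X_+$, so the restriction of $T$ to any order bounded subset is $\varsigma\tau$-continuous, which is exactly what the theorem asserts.

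There is no substantive obstacle in this plan. All the content is concentrated in the short Hausdorff-locally-solid argument of the first step, which upgrades the Lebesgue hypothesis to full order continuity; once that is in hand, \cite[Thm.4.23]{AB1} does the work. An alternative route avoiding \cite[Thm.4.23]{AB1} would invoke Lemma~\ref{PC1} (which already yields $o\tau$-continuity of $T$), reduce to nets in $[0,u]$ via the lattice-homomorphism identity $|Tx|=T|x|$ and the local solidness of $\varsigma,\tau$, and then extract an $o$-convergent subnet from each order bounded $\varsigma$-null net in the Fatou lattice $(X,\varsigma)$; the subnet extraction would be the only delicate point in that alternative, but the direct order-continuity argument sidesteps it entirely.
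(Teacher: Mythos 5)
Your proof is correct, but it takes a genuinely different route from the paper. The paper pulls the topology $\tau$ back through $T$: since $T$ is a lattice homomorphism, the sets $T^{-1}(V)$, for $V$ running over a solid base of $\tau(0)$, form a base for a (possibly non-Hausdorff) locally solid topology $\zeta$ on $X$, which is Lebesgue precisely because $T$ is a Lebesgue operator; it then invokes the topology-comparison result \cite[Thm.4.21]{AB1} (a Fatou topology is finer than a Lebesgue topology on order bounded sets) and concludes from the tautological $\zeta\tau$-continuity of $T$. You instead upgrade the hypothesis: your observation that a positive Lebesgue operator into a Hausdorff locally solid lattice is automatically order continuous (via $\tau$-closedness of $Y_+$, equivalently \cite[Thm.2.21]{AB1}) is sound, and it legitimately reduces the theorem to the already-quoted \cite[Thm.4.23]{AB1}. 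What each approach buys: the paper's argument essentially inlines the proof of \cite[Thm.4.23]{AB1} adapted to the nominally weaker hypothesis and works at the level of the more primitive Theorem 4.21; your argument is shorter but reveals that, under the paper's standing Hausdorff assumption, the hypotheses ``Lebesgue lattice homomorphism'' and ``order continuous lattice homomorphism into a Lebesgue lattice'' coincide, so the theorem is equivalent to, rather than a strict generalization of, \cite[Thm.4.23]{AB1} --- a point worth flagging since the paper advertises the result as a generalization. Your closing reduction from order intervals to arbitrary order bounded subsets is trivial and fine.
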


\begin{proof}
Let ${\cal N}$ be a base of solid neighborhoods for $\tau(0)$. Since $T$ is a Lebes\-gue lattice homomorphism, the family 
$\{T^{-1}(V)\}_{V\in{\cal N}}$ is a base of solid neighborhoods of zero for a Lebesgue (not necessary Hausdorff) topology $\zeta$ on $X$. 
Since $\varsigma$ is Fatou and $\zeta$ is Lebesgue, then on any order bounded subset 
of $X$ the topology induced by $\varsigma$ is finer than the topology induced by $\zeta$ due to Theorem 4.21 of \cite{AB1}.
Clearly, $T:X\to Y$ is $\zeta\tau$-continuous. Thus, $T$ is also $\varsigma\tau$-continuous on order bounded subsets of $X$.
\end{proof}
 
Recall that, for a locally solid lattice $(X,\varsigma)$, a vector $\hat{v}\in \hat{X}_+$ is called an {\em upper element} of $X$ whenever there exists an increasing net $u_\alpha$ in $X_+$ 
such that $u_\alpha\stackrel{\hat{\varsigma}}{\to} \hat{v}$ (cf. \cite[Def.5.1]{AB1}). In a metrizable locally solid lattice $(X,\varsigma)$ upper elements can be described by sequences: 
a vector $\hat{v}\in \hat{X}_+$ is an upper element of $X$ iff there exists an increasing sequence $u_n$ in $X_+$ such that $u_n\stackrel{\hat{\varsigma}}{\to} \hat{v}$ (cf. \cite[Thm.5.2]{AB1}). 
For a metrizable $\sigma$-Lebesgue lattice $(X,\varsigma)$ its topological completion $(\hat{X}, \hat{\varsigma})$ is also $\sigma$-Lebesgue (cf. \cite[Thm.5.36]{AB1}). 

\begin{theorem}\label{sveta4}
Let $(X,\varsigma)$ be a metrizable locally solid lattice, $(Y,\tau)$ a $\sigma$-Lebesgue lattice, and $T:(X,\varsigma)\to(Y,\tau)$ a positive $\varsigma\tau$-continuous $\sigma$-Lebesgue operator. 
Then the unique extension $\hat{T}:(\hat{X}, \hat{\varsigma}) \to (\hat{Y}, \hat{\tau})$ is positive, $\hat{\varsigma}\hat{\tau}$-con\-ti\-nu\-ous, and $\sigma$-Lebesgue. 
\end{theorem}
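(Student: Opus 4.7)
The plan is to verify the three conclusions about $\hat{T}$ in turn. Existence and $\hat{\varsigma}\hat{\tau}$-continuity of $\hat{T}$ come from the standard extension theorem: $T$ is uniformly continuous linear, and $(\hat{Y},\hat{\tau})$ is complete, so there is a unique continuous linear $\hat{T}:(\hat{X},\hat{\varsigma})\to(\hat{Y},\hat{\tau})$ extending $T$.

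For positivity, fix $\hat{x}\in\hat{X}_+$. Metrizability of $\hat{\varsigma}$ and density of $X$ in $\hat{X}$ furnish a sequence $x_n\in X$ with $x_n\convvarsh\hat{x}$; continuity of the lattice operation $(\cdot)^+$ in $\hat{\varsigma}$ lets us replace $x_n$ by $x_n^+\in X_+$, so $Tx_n^+\in Y_+\subseteq\hat{Y}_+$, and $\hat{\tau}$-closedness of $\hat{Y}_+$ gives $\hat{T}\hat{x}=\lim Tx_n^+\in\hat{Y}_+$.

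The $\sigma$-Lebesgue property is the main step. Given $\hat{x}_n\downarrow 0$ in $\hat{X}$, the aim is $\hat{T}\hat{x}_n\convtauh 0$. The strategy is to approximate each $\hat{x}_n$ from above by an element of $X_+$, form a decreasing sequence in $X_+$ dominating the $\hat{x}_n$ with controlled error, and apply the $\sigma$-Lebesgue property of $T$. Concretely, fix a decreasing countable base $(\hat{U}_k)$ of solid $\hat{\varsigma}$-neighborhoods of zero with $\hat{U}_{k+1}+\hat{U}_{k+1}\subseteq\hat{U}_k$; use density and solidness of $\hat{\varsigma}$ to pick $v_n\in X_+$ with $|v_n-\hat{x}_n|\in\hat{U}_n$; adjust to $y_n\in X_+$ satisfying $y_n\ge\hat{x}_n$ in $\hat{X}$ and $y_n-\hat{x}_n\in\hat{U}_{n-1}$; and set $z_n:=y_1\wedge\cdots\wedge y_n\in X_+$. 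Since $y_k\ge\hat{x}_k\ge\hat{x}_n$ for every $k\le n$, one has $z_n\downarrow$ in $X$ and $z_n\ge\hat{x}_n$, and $0\le z_n-\hat{x}_n\le y_n-\hat{x}_n\in\hat{U}_{n-1}$ records the approximation quality.

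The main obstacle is extracting the $\sigma$-Lebesgue hypothesis on $T$ from this construction, namely that $z_n\downarrow 0$ in $X$ itself: a priori $X$ is neither Dedekind $\sigma$-complete nor $\sigma$-Lebesgue, and the infimum of $\{z_n\}$ in $X$ need not exist. The plan here is first to show that the infimum of $\{z_n\}$ in $\hat{X}$ equals $0$ (using $z_n\ge\hat{x}_n\downarrow 0$ and $z_n-\hat{x}_n\in\hat{U}_{n-1}$, so that any nontrivial positive lower bound would contradict the vanishing of the approximation error), and then to transfer this infimum to $X$ by invoking regularity of $X$ in $\hat{X}$ along the $\varsigma$-Cauchy sequence $z_n$, using the criterion of \cite[Thm.2.41]{AB1}. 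Once $z_n\downarrow 0$ is established in $X$, the $\sigma$-Lebesgue property of $T$ yields $Tz_n\convtau 0$, and positivity of $\hat{T}$ with $0\le\hat{T}\hat{x}_n\le\hat{T}z_n=Tz_n$, together with the solid structure of $\hat{\tau}$, gives $\hat{T}\hat{x}_n\convtauh 0$.
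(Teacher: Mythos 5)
Your reductions of positivity and continuity to the standard extension theorem are fine (the paper dismisses these with ``clearly''), but the central step of your $\sigma$-Lebesgue argument contains a genuine gap: the elements $y_n\in X_+$ with $y_n\ge\hat{x}_n$ and $y_n-\hat{x}_n\in\hat{U}_{n-1}$ need not exist. Density only gives $v_n\in X$ that are topologically close to $\hat{x}_n$; the natural ``adjustment'' $v_n\vee\hat{x}_n$ lands in $\hat{X}$, not in $X$, and in general no element of $X_+$ dominates a given element of $\hat{X}_+$ at all. Concretely, for $X=(c_{00},\|\cdot\|_\infty)$ with $\hat{X}=c_0$, the vector $\hat{x}=(1/k)_{k}\in\hat{X}_+$ has no upper bound in $X_+$ whatsoever, so approximation from above by elements of $X_+$ is impossible. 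Since your entire strategy rests on the chain $0\le\hat{T}\hat{x}_n\le Tz_n$ with $z_n\in X_+$ dominating $\hat{x}_n$, the argument collapses at this point. (A telltale sign is that your main step never uses the $\varsigma\tau$-continuity of $T$, which the paper needs essentially there.)

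The paper's proof resolves exactly this difficulty through the notion of \emph{upper elements}: by \cite[Thm.5.4]{AB1} each $\hat{x}_n$ can be dominated, within $\hat{X}$, by an upper element $\hat{v}_n$ with small error; after replacing $\hat{x}_n$ by $\hat{w}_n=\wedge_{i\le n}\hat{v}_i\downarrow 0$ one may assume the $\hat{x}_n$ are themselves upper elements, and upper elements can be approximated from \emph{below} by $u_n\in X_+$. Setting $z_n=\wedge_{i\le n}u_i$ gives $z_n\downarrow 0$ in $X$ with $z_n\le\hat{x}_n$, so the $\sigma$-Lebesgue property of $T$ controls $Tz_n$, and the residual $\hat{T}(\hat{x}_n-z_n)$ is controlled by continuity via $\hat{x}_n-z_n\in\overline{V}$ and $\hat{T}(\overline{V})\subseteq\overline{W}_1$; the two pieces are then combined through $\hat{T}\hat{x}_n\le\hat{T}z_n+\hat{T}(\hat{x}_n-z_n)$. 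So the approximation must run in the opposite direction from the one you chose. A minor additional remark: your appeal to regularity of $X$ in $\hat{X}$ via \cite[Thm.2.41]{AB1} is both unnecessary and unavailable --- unnecessary because passing ``$\inf z_n=0$ in $\hat{X}$'' down to ``$\inf z_n=0$ in $X$'' for a sequence in $X_+$ is automatic (any lower bound in $X$ is one in $\hat{X}$), and unavailable because $X$ need not be regular in $\hat{X}$ under the stated hypotheses.
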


\begin{proof}
Clearly, $\hat{T}$ is positive and $\hat{\varsigma}\hat{\tau}$-continuous. To show that $\hat{T}$ is $\sigma$-Lebesgue, assume $\hat{x}_n\downarrow 0$ in $(\hat{X},\hat{\varsigma})$. 
Let $W\in\tau(0)$. We have to show $\hat{T}\hat{x}_n\in\overline{W}$ for large enough $n$. Take a solid neighborhood $W_1\in\tau(0)$ with $W_1 + W_1 \subseteq W$ and choose 
a neighborhood $V\in\varsigma(0)$ with $T(V) \subseteq W_1$. Let $\{V_n\}_{n=1}^{\infty}$ be a base for $\varsigma(0)$ satisfying $V_{n+1}+ V_{n+1} \subseteq V_n$ 
for all natural $n$ and also $V_1+V_1\subseteq V$. At this point we borrow the second paragraph of the proof of \cite[Thm.5.36]{AB1}.
 
For each $n$, let $\hat{v}_n\in\hat{X}$ be an upper element of $X$ such that $\hat{x}_n\le\hat{v}_n$ and $\hat{v}_n-\hat{x}_n\in\overline{V}_n$; see \cite[Thm.5.4]{AB1}.
Put $\hat{w}_n:=\wedge^n_{i=1}\hat{v}_n$ for all $n$ and note that each $\hat{w}_n$ is an upper element of $X$, $\hat{w}_n\downarrow$ and 
$\hat{w}_n-\hat{x}_n\in\overline{V}_n$. Since $\hat{x}_n\downarrow 0$, it follows from \cite[Thm.2.21(e)]{AB1} that $\hat{w}_n\downarrow 0$ also holds in $\hat{X}$. 
Thus, we can assume without loss in generality that $\hat{x}_n$ is a sequence of upper elements of $X$. 

Take $u_n\in X_+$ with $u_n \le \hat{x}_n$ and $\hat{x}_n - u_n \in \overline{V}_n$.  Application of \cite[Thm.2.21(e)]{AB1} to the nets 
$\hat{x}_n\downarrow 0$ and $z_n:=\wedge^n_{i=1}u_n\downarrow $ with $\hat{x}_n-z_n\convvarsh 0$  gives $z_n\downarrow 0$ in $X$.
As $T$ is $\sigma$-Lebesgue, $Tz_n\convtau 0$. So, there exists some $n_1$ such that
$$
   \hat{T}z_n=Tz_n\in W_1\subseteq\overline{W}_1\ \ \ \ \ (\forall n\ge n_1).
   \eqno(6)
$$
Observe that 
$$
   0\le  \hat{x}_n - z_n = \hat{x}_n - \wedge^n_{i=1} u_i = \vee^n_{i=1}(\hat{x}_n - u_i) \le
   \vee^n_{i=1}(\hat{x}_i - u_i)\le
$$
$$
   \sum^n_{i=1}(\hat{x}_i - u_i) \in\overline{V}_1 + \overline{V}_2 + \dotsc + \overline{V}_n \subseteq \overline{V} \ \ \ (\forall n\in\mathbb{N}).
   \eqno(7)
$$
It follows from $(6)$, $(7)$, and $\hat{T}(\overline{V}) \subseteq \overline{W}_1$ that
$$
    \hat{T}(\hat{x}_n) \le \hat{T}(z_n)+\hat{T}(\hat{x}_n - z_n ) \in \overline{W}_1 + \overline{W}_1 \subseteq \overline{W} \ \ \ (\forall \ n\ge n_1).
$$
Since $W\in\tau(0)$ was arbitrary, $\hat{T}$ is $\sigma$-Lebesgue.
\end{proof}

\noindent
Since Theorem 5.36 of \cite{AB1} follows from Theorem \ref{sveta4} when $(X,\varsigma)$ coincides with $(Y,\tau)$ and $T$ is the identity operator on $X$,
Theorem \ref{sveta4} can be also considered as a generalization of \cite[Thm.5.36]{AB1}.

One more question deserves being mentioned.  Let $T$ be Lebes\-gue, $o\tau$-continuous,  $o\tau$-bounded, $o\tau$-compact, $KB$, or Levi. Does $|T|$ satisfy the same property?
The answer is positive, when $T$ maps a vector lattice $X$ to a  Dedekind complete locally solid lattice $(Y,\tau)$, in the following cases:
\begin{enumerate}
\item[$(*)$] \  
for a  Lebes\-gue/$o\tau$-continuous order continuous operator $T$, when $(Y,\tau)$ is Lebesgue, since $|T|$ is order continuous 
by \cite[Thm.1.56]{AB2} and hence $x_\alpha\downarrow 0$/$x_\alpha\convo 0$ in $X$ imply $|T|x_\alpha\convo 0$ and (since 
$(Y,\tau)$ is Lebesgue)  $|T|x_\alpha\convtau 0$;
\item[$(**)$] \  
for an $o\tau$-bounded operator $T$, by Proposition \ref{regular are otau-bounded}.
\end{enumerate} 

\begin{rem}
{\em
\begin{enumerate}
Let $(X,\varsigma)$ and $(X,\tau)$ be locally solid lattices.
\item[$a)$] \
Recall that $(Y,\tau)$ is said to be {\em boundedly order-boun\-ded} (BOB) whenever 
increasing $\tau$-bounded nets in $Y_+$ are order boun\-ded in $Y$ (cf. \cite[Def.3.19]{Tay1}).

Assume $(Y,\tau)$ to be Dedekind complete and BOB, Then each positive quasi $KB$ operator from $(X,\varsigma)$ to $(Y,\tau)$ is quasi Levi. Let $x_\alpha$ be an increasing $\varsigma$-bounded net in $X_+$. 
Then $Tx_\alpha$ is a $\tau$-Cauchy increasing net in $Y_+$. Then $Tx_\alpha$ is $\tau$-bounded and hence $o$-bounded since $(Y,\tau)$ is BOB. 
The Dedekind completeness of $Y$ implies that $Tx_\alpha\uparrow y$ for some $y\in Y$. Therefore $Tx_\alpha$ is $o$-Cauchy as desired.
\item[$b)$] \
Recall that $(X, \varsigma)$ satisfies the {\em $B$-property} (cf. \cite[Def.3.1]{Tay1}) whenever the identity operator $I$ on $X$ is quasi $\sigma$-$KB$.
Let $T:(X,\varsigma)\to(Y,\tau)$ be a continuous operator and assume the topology $\varsigma$ in $(X,\varsigma)$ to be minimal. Then $T$ is Lebesgue and quasi $KB$.
The first part follows as $(X, \varsigma)$ is Lebesgue by \cite[Thm.7.67]{AB1} and hence $T: (X, \varsigma) \to (Y, \tau )$ is  Lebesgue.
The second part follows as $(X, \varsigma)$ satisfies the $B$-property \cite[Prop.3.2]{Tay1} and hence $T$ is $\sigma$-$KB$, and thus $T$ is quasi $KB$
by Proposition \ref{quasi-KB-vs-sigma-quasi-KB}. 
\end{enumerate}
\noindent
Similarly, each regular operator from a vector lattice $X$ to a locally solid lattice $(Y, \tau)$ with minimal topology $\tau$ is quasi Lebesgue by \cite[Cor.3.3]{Tay1}.
}
\end{rem}

\section{Operators between Banach lattices}

In this section we investigate operators whose domains and ranges are Banach lattices. 
It is clear that every continuous operator from an order continuous Banach lattice to a topological vector space is Lebesgue. 
The following proposition generalizes \cite[Prop.2]{AEEM1}.

\begin{prop}\label{Prop.2 from 1AEEM}
An operator $T$ from a $\sigma$-order continuous Banach lattice $X$ to a metrizable topological vector space $(Y,\tau)$ is continuous iff $T$ is $\sigma{}o\tau$-continuous.
\end{prop}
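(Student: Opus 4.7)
The plan is to reduce continuity of $T$ to sequential continuity (using metrizability of both $X$ and $(Y,\tau)$) and then bridge norm convergence and order convergence in $X$ via the $\sigma$-order continuity of its norm.

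For the necessity, assume $T$ is continuous and let $x_n \convo 0$ in $X$. By definition of order convergence, there exists $z_k\downarrow 0$ in $X$ with $|x_n|\le z_k$ for all $n\ge n_k$. Since the norm of $X$ is $\sigma$-order continuous, $\|z_k\|\to 0$, whence $\|x_n\|\to 0$, and continuity of $T$ from the (metrizable) normed space $X$ to $(Y,\tau)$ gives $Tx_n\convtau 0$. So $T$ is $\sigma o\tau$-continuous.

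For the sufficiency, assume $T$ is $\sigma o\tau$-continuous and, for contradiction, that $T$ is not continuous. Since both $X$ and $(Y,\tau)$ are metrizable, this means there exist a sequence $x_n$ in $X$ with $\|x_n\|\to 0$ and some $U\in\tau(0)$ such that $Tx_n\notin U$ along a subsequence; relabel and pass to a further subsequence $y_k$ with $\|y_k\|\le 2^{-k}$. Because $X$ is a Banach lattice, the series $\sum_{k=1}^{\infty}|y_k|$ converges absolutely in norm, and the tails $z_N:=\sum_{k\ge N}|y_k|$ form a decreasing positive sequence with $\|z_N\|\to 0$. A short lattice argument shows $z_N\downarrow 0$: any $z\in X$ with $z\le z_N$ for all $N$ satisfies $z^+\le z_N^+=z_N$, so $\|z^+\|\to 0$ forces $z^+=0$, i.e. $z\le 0$. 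Since $|y_k|\le z_k$ and $z_k\downarrow 0$, we have $y_k\convo 0$ in $X$, so the $\sigma o\tau$-continuity of $T$ gives $Ty_k\convtau 0$, contradicting $Ty_k\notin U$.

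The main obstacle is the sufficiency direction, and within it the one subtle point is verifying that the tail sequence $z_N$ actually decreases to zero as an infimum in the lattice (rather than only in norm); this is where both completeness of $X$ (to sum the absolutely convergent series) and the lattice structure enter. Everything else—the reduction to sequences via metrizability and the extraction of a $2^{-k}$-summable subsequence—is routine.
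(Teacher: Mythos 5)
Your proof is correct, and its mathematical core is the same as the paper's: everything hinges on the fact that in a Banach lattice a norm-null sequence admits an order-null subsequence. The paper cites this extraction as \cite[Thm.VII.2.1]{V} and packages it in a subnet argument: for a norm-convergent net $x_\alpha\convnX x$, every subnet yields (via Remark \ref{sequential}) a norm-convergent sequence, which has an $o$-convergent subsequence, and the metrizability of $(Y,\tau)$ is then invoked, again through Remark \ref{sequential}, to pass from convergence of $Tx_{\alpha_{\beta_{n_k}}}$ back to $Tx_\alpha\convtau Tx$. You instead reduce continuity of the linear map $T$ to sequential continuity at zero (which uses only first countability of the domain) and prove the extraction lemma from scratch via the summable tails $z_N=\sum_{k\ge N}|y_k|$, verifying $z_N\downarrow 0$ by the lower-bound argument with $z^+$. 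Your route has the side benefit of showing that metrizability of $(Y,\tau)$ is not actually needed for either implication; it is the paper's subnet formulation, not the result, that uses it. One small caveat, which your argument shares with the paper's one-line dismissal of the necessity: you take the dominating family witnessing $x_n\convo 0$ to be a sequence $z_k\downarrow 0$, whereas the definition of $o$-convergence in Section 1 allows a net over an arbitrary index set, in which case $\sigma$-order continuity of the norm does not immediately give $\|z_\beta\|\to 0$. This is a shared imprecision in the reading of the definition rather than a defect specific to your proof.
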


\begin{proof}
The necessity is trivial.

For the sufficiency: let $T$ be $\sigma{}o\tau$-continuous, $x_\alpha\convnX x$, and $x_{\alpha_\beta}$ be a subnet of the net $x_\alpha$.
By $a)$ of Remark \ref{sequential}, there exists a sequence $\beta_n$ of indices with $\|x_{\alpha_{\beta_n}}-x\|_X\to 0$.  Since $X$ is a Banach lattice, 
there is a subsequence $x_{\alpha_{\beta_{n_k}}}$ of $x_{\alpha_{\beta_n}}$ satisfying $x_{\alpha_{\beta_{n_k}}}\convo x$ in $X$ (see \cite[Thm.VII.2.1]{V}). 
Since $T$ is $\sigma{}o\tau$-continuous then $Tx_{\alpha_{\beta_{n_k}}}\convtau Tx$. Remark \ref{sequential} implies $Tx_\alpha\convtau Tx$, as required.
\end{proof}

In the next proposition we transfer the $\sigma$-condition from the domain $X$ of $T$ to the operator $T$. 

\begin{prop}\label{sigma-otau-cont to cont}
Each $\sigma{}o\tau$-continuous operator $T$ from a Banach lattice $X$ to a Banach lattice $Y$ is continuous. 
\end{prop}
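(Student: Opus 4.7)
The plan is to prove continuity via the closed graph theorem. Since $X$ and $Y$ are Banach spaces, it suffices to show that the graph of $T$ is closed: whenever $x_n\convnX 0$ and $Tx_n\convnY y$ for some sequence $(x_n)\subseteq X$ and some $y\in Y$, we must have $y=0$.

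The key tool is the standard fact from the theory of Banach lattices (the same one invoked in the proof of Proposition \ref{Prop.2 from 1AEEM} via \cite[Thm.VII.2.1]{V}) that every norm null sequence in a Banach lattice admits an order null subsequence. Applied to $x_n\convnX 0$, this provides indices $n_k$ with $x_{n_k}\convo 0$ in $X$. By $\sigma o\tau$-continuity of $T$ (here $\tau$ is the norm topology of $Y$), this forces $Tx_{n_k}\convnY 0$. On the other hand, $Tx_n\convnY y$ gives $Tx_{n_k}\convnY y$, and by the uniqueness of limits in the Hausdorff space $(Y,\|\cdot\|_Y)$ we conclude $y=0$.

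Hence the graph of $T$ is closed, and the closed graph theorem yields continuity of $T$. The only delicate point is the appeal to the subsequence extraction, but this is a classical property of Banach lattices and is already used in the preceding proposition, so no additional work is required.
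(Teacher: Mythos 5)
Your proof is correct, but it reaches continuity by a different route than the paper. Both arguments hinge on the same key fact from \cite[Thm.VII.2.1]{V}, namely that a norm-null sequence in a Banach lattice has an order-null subsequence. The paper, however, does not invoke the closed graph theorem: it proves sequential continuity at $0$ directly, by combining that extraction with the metric-space principle of Remark \ref{sequential} (a sequence converges to $y$ iff every subsequence admits a further subsequence converging to $y$), applied to each subsequence of $Tx_n$. Your version trades that double-subsequence bookkeeping for the closed graph theorem: a single extraction $x_{n_k}\convo 0$ yields $Tx_{n_k}\convnY 0$, which together with $Tx_{n_k}\convnY y$ and Hausdorffness forces $y=0$, and completeness of $X$ and $Y$ does the rest. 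What your approach buys is brevity and the avoidance of the subsequence characterization of convergence; what it costs is the appeal to a Baire-category-based theorem where the paper's argument is elementary and, as a byproduct, exhibits the explicit quantitative statement $\|Tx_n\|\to 0$ for every norm-null sequence rather than deducing boundedness indirectly. Your reduction of the closed-graph condition to the case $x=0$, $y=0$ via linearity is correctly handled, so there is no gap.
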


\begin{proof}
It is sufficient to show that $\|Tx_n\|\to 0$ for every norm-null sequence $x_n$ in $X$.
Let $\|x_n\|\to 0$ in $X$. For proving $\|Tx_n\|\to 0$, in view of $b)$ of Remark \ref{sequential},
it is enough to show that, for each subsequence $Tx_{n_k}$ of $Tx_n$, there exists a sequence $n_{k_i}$ of indices with 
$\|Tx_{n_{k_i}}\|\to 0$. Let $Tx_{n_k}$ be a subsequence of $Tx_n$. By \cite[Thm.VII.2.1]{V}, $x_{n_k}$ has 
a further subsequence $x_{n_{k_i}}$ with $x_{n_{k_i}}\convo 0$, and since $T$ is $\sigma{}o\tau$-continuous, 
$\|Tx_{n_{k_i}}\|\to 0$ as desired.
\end{proof}

\begin{theorem}\label{see Thm. 5.28 from AB2}
Let $T:X\to Y$ be an order bounded operator between Banach lattices $X$ and $Y$, the norms in $X'$ and $Y$ be order continuous, and $Y$ be weakly sequentially complete. Then $T$ is a quasi $KB$-operator.
\end{theorem}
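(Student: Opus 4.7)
The plan is to reduce to positive operators and then upgrade weak Cauchyness of $Tx_n$ to norm convergence, using weak sequential completeness together with order continuity of the norm in $Y$.

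First, since the norm of $Y$ is order continuous, $Y$ is Dedekind complete; hence the order bounded operator $T:X\to Y$ is regular, say $T=T^+-T^-$ with $T^\pm\ge 0$. Positive operators between Banach lattices are automatically norm continuous, so $T$ is continuous. By Proposition \ref{quasi-KB-vs-sigma-quasi-KB} it is enough to prove that $T$ is quasi $\sigma$-$KB$, and this in turn follows once it is proved for $T^+$ and $T^-$ separately; so I may assume $T\ge 0$.

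Now fix a norm bounded increasing sequence $(x_n)$ in $X_+$. Then $(Tx_n)$ is increasing in $Y_+$ and norm bounded. For each $g\in Y'_+$, the real sequence $g(Tx_n)$ is increasing and bounded, hence convergent; by the Riesz decomposition in the Dedekind complete vector lattice $Y'$, $g(Tx_n)$ converges for every $g\in Y'$, so $(Tx_n)$ is weakly Cauchy. Weak sequential completeness of $Y$ then supplies $y\in Y$ with $Tx_n\convw y$. Since $Y_+$ is norm closed and convex, it is weakly closed; passing to the weak limit in $Tx_m-Tx_n\ge 0$ (for $m\ge n$) and in $z-Tx_n\ge 0$ (for any upper bound $z$ of $(Tx_n)$) identifies $y$ as the supremum of $(Tx_n)$ in $Y$. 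Thus $y-Tx_n\downarrow 0$, and the order continuity of the norm in $Y$ yields $\|y-Tx_n\|\to 0$; in particular $(Tx_n)$ is norm Cauchy, as required.

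The only delicate step is the identification $Tx_n\uparrow y$ from the weak limit, which rests on weak closedness of $Y_+$; once this is in hand, order continuity of the norm closes the gap between weak and norm convergence. The hypothesis that $X'$ has order continuous norm does not appear to be needed by this route, and may be exploited instead in a shorter adjoint-based argument via weak compactness of order intervals in $X$.
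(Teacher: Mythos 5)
Your proof is correct, and its skeleton coincides with the paper's: reduce to $T\ge 0$ via Dedekind completeness of $Y$, reduce to sequences via Proposition \ref{quasi-KB-vs-sigma-quasi-KB}, show that $Tx_n$ is weakly Cauchy, invoke weak sequential completeness to get a weak limit $y$, and upgrade the monotone weak convergence to norm convergence. The differences lie in how the two intermediate steps are discharged. The paper obtains the weak Cauchyness of $Tx_n$ by citing \cite[Thm.5.28]{AB2}, which is where the hypothesis that $X'$ has order continuous norm enters; you instead observe that for an increasing norm bounded sequence in $Y_+$ the scalar sequences $g(Tx_n)$, $g\in Y'_+$, are monotone and bounded, and decompose a general $g$ as $g^+-g^-$. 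This elementary argument is valid in any Banach lattice and correctly exposes the order continuity of the norm in $X'$ as redundant for this statement. For the final step the paper cites the Dini-type theorem \cite[Thm.3.52]{AB2} (an increasing weakly convergent net is norm convergent, with no order continuity needed), whereas you prove the needed instance by hand: weak closedness of $Y_+$ identifies $y=\sup_n Tx_n$, and order continuity of the norm in $Y$ gives $\|y-Tx_n\|\to 0$. Both routes are sound; yours is self-contained, while the paper's citation of the Dini theorem shows that even the order continuity of the norm in $Y$ is only used to make $T$ regular. (Indeed, your weak-Cauchy observation combined with the Dini theorem shows that a weakly sequentially complete Banach lattice is already a $KB$-space, so the theorem also follows from item $d)$ of Remark \ref{c_00}.)
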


\begin{proof}
Since $Y$ has order continuous norm, $Y$ is Dedekind complete, and hence the operator $T$ is regular. Therefore, we may assume $T\ge 0$. 
By Proposition \ref{quasi-KB-vs-sigma-quasi-KB}, it suffices to prove that $T$ is quasi $\sigma$-$KB$.  Let $x_n$ be a $\|\cdot\|_X$-bounded increasing 
sequence in $X_+$.  Then $Tx_n$ is $\|\cdot\|_Y$-bounded and $0\le Tx_n\uparrow$. It follows from \cite[Thm.5.28]{AB2} that $Tx_n$ has a weak 
Cauchy subsequence $Tx_{n_k}$. Then $Tx_n$ is weak Cauchy and hence $Tx_n\convw y$ for some $y\in Y$. 
Since $Tx_n\uparrow$, it follows from \cite[Thm.3.52]{AB2} that $Tx_n\convnY y$, and hence $Tx_n$ is $\|\cdot\|_Y$-Cauchy as required.
\end{proof}

As $c_0$ is not a $KB$-space, the identity operator $I:c_0\to c_0$ is not quasi $KB$
by Remark \label{c_00} $a)$; $d)$. Thus, the weak sequential completeness
of $E$ is essential in Theorem \ref{see Thm. 5.28 from AB2}.

\begin{exam}\label{c_w(R)} 
{\em 
Let $X=(c_\omega(\mathbb{R}),\|\cdot\|_\infty)$ be the Banach lattice of all bounded $\mathbb{R}$-valued functions on $\mathbb{R}$ such that each $f\in X$ differs 
from a constant say $a_f$ on a countable subset of $\mathbb{R}$. It is easy to see that $X$ is Dedekind $\sigma$-complete yet not Dedekind complete.

We define a positive operator $T:X\to X$ as follows.  
$Tf$ is the constant function $a_f\cdot\mathbb{I}_{\mathbb{R}}\in X$, for which the set $\{d\in\mathbb{R}: f(d)\ne a_f\}$ is countable. 
Clearly, $T$ is quasi $KB$ and hence quasi Lebesgue.
\begin{enumerate}
\item[$(i)$] \ 
For the sequence $f_n:=\mathbb{I}_{\{1,2,\ldots n\}}\in X$ we have that $f_n\convo\mathbb{I}_{\mathbb{N}}\in X$, yet $\|f_n-\mathbb{I}_{\mathbb{N}}\|_\infty=1$ for all $n$.
Thus, the norm in $X$ is not $\sigma$-order continuous. In particular, Proposition \ref{Prop.2 from 1AEEM} is not applicable to $T$.
\item[$(ii)$] \ 
$T$ is rank-one continuous $($therefore quasi $KB$, compact, and $o\tau$-compact$)$ operator in $X$. 
Let $f_n\convo 0$. Since for every $\varepsilon>0$ there exists $n_\varepsilon$ such that 
the set $\cup_{n\ge n_\varepsilon}\{d\in\mathbb{R}: |f_n(d)|\ge\varepsilon\}$ is countable, $\|Tf_n\|_\infty<\varepsilon$ for all $n\ge n_\varepsilon$.
Thus, $T$ is $\sigma{}o\tau$-continuous and hence $\sigma$-Lebesgue with respect to the norm topology  on $X$.
\item[$(iii)$] \ 
Operator $T$ is not Lebesgue. Indeed, for the net $f_\alpha:=\mathbb{I}_{\mathbb{R}\setminus\alpha}\in X$ indexed by the family $\Delta$ of all finite subsets 
of $\mathbb{R}$ ordered by inclusion, we have $f_\alpha\downarrow 0$ yet $\|Tf_\alpha\|_\infty=\|\mathbb{I}_{\mathbb{R}}\|_\infty=1$ for all $\alpha\in\Delta$.
The same argument shows that $T$ is not weakly Lebesgue. Since $Tf_k\ne 0$ for at most one $k$ for each disjoint sequence $f_n$ in $X$, operator
$T$ is $M$-weakly compact. It should be clear that $T$ is not $L$-weakly compact.
\item[$(iv)$] \ 
$T$ is $KB$. Indeed, let $X\ni f_\alpha\uparrow$ and $\|f_\alpha\|\le M\in\mathbb{R}$. Then $a_{f_\alpha}\uparrow\le M$.
Take $f:=a\cdot\mathbb{I}_{\mathbb{R}}\in X$ for $a=\sup_\alpha a_{f_\alpha}\in\mathbb{R}$. Clearly, $\|T(f-f_\alpha)\|\downarrow 0$, as desired. 
\item[$(v)$] \ 
Clearly, $T$ is a Dunford-Pettis lattice homomorphism. By Corollary \ref{KB lattice homomorphism dominated property}, the span of $[0,T]$ is a
vector sublattice of $KB$-operators in the ordered space $L_r(X)$ of all regular operators in $X$.
\item[$(vi)$] \ 
The vector lattice $X$ is $\tau$-laterally $\sigma$-complete yet not $\tau$-laterally complete with respect to the norm topology on $X$. 
\end{enumerate}
}
\end{exam}

Observe that a $KB$-operator $T:\ell_1\to \ell_\infty$ defined by $[T(a)]_k:=\sum_{n=1}^{\infty}a_n$ for all $k\in\mathbb{N}$ is neither $L$- nor $M$-weakly compact (cf., e.g., \cite[p.322]{AB2}).
In the present paper we do not investigate conditions under that $KB$-operators are (weakly) Lebesgue. The following lemma generalizes the observation that the identity operator 
in a Banach lattice $X$ is quasi $KB$ iff $X$ is a $KB$-space.

\begin{lem}\label{Ghoussoub-Johnson 1}
If $c_0$ does not embed in a Banach space $Y$, then every continuous operator $T$ from any Banach lattice $X$ to $Y$ is a quasi $KB$ operator.
\end{lem}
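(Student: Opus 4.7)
My plan is to reduce to sequences, translate the problem into a statement about series, and then invoke the Bessaga--Pe{\l}czy{\'n}ski dichotomy ``no copy of $c_0$ in $Y$ iff every weakly unconditionally Cauchy series in $Y$ converges unconditionally.''

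First, by Proposition \ref{quasi-KB-vs-sigma-quasi-KB}, it is enough to show that $T$ is quasi $\sigma$-$KB$, i.e. that for every norm-bounded increasing sequence $(x_n)$ in $X_+$ the sequence $(Tx_n)$ is norm-Cauchy in $Y$. Set $y_n := x_{n+1}-x_n \ge 0$ and write $M := \sup_n\|x_n\|_X < \infty$. Then for any choice of signs (or indeed any scalars $a_1,\dots,a_N$ with $|a_k|\le 1$), the lattice inequality
$$
   \Bigl\|\sum_{k=1}^N a_k y_k\Bigr\|_X \le \Bigl\|\sum_{k=1}^N |a_k|\,y_k\Bigr\|_X \le \Bigl\|\sum_{k=1}^N y_k\Bigr\|_X = \|x_{N+1}-x_1\|_X \le 2M
$$
shows that $\sum y_n$ is a weakly unconditionally Cauchy (wuc) series in $X$.

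Next, continuous linear operators preserve wuc series: since $\|\sum_{k=1}^N a_k T y_k\|_Y \le \|T\|\cdot 2M$ for $|a_k|\le 1$, the series $\sum T y_n$ is wuc in $Y$. Because $Y$ contains no isomorphic copy of $c_0$, the Bessaga--Pe{\l}czy{\'n}ski theorem applies and forces $\sum T y_n$ to converge unconditionally in $Y$. In particular the partial sums $\sum_{k=1}^N T y_k = Tx_{N+1} - Tx_1$ converge in norm, so $(Tx_n)$ is norm-Cauchy, which is exactly what was to be proved.

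The only real substance is recognizing that the positivity of the increments plus the norm bound automatically upgrades the convergence of $\sum y_n$ to a wuc series; after that, the no-$c_0$ hypothesis does all the work via Bessaga--Pe{\l}czy{\'n}ski. The one point to be careful about is not claiming that $Tx_n$ converges to $Tx$ for some $x\in X$ (which would require $X$ itself to be a $KB$-space); we only need the Cauchy property, which is exactly the \emph{quasi} $KB$ condition.
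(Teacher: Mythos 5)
Your proof is correct, but it takes a genuinely different route from the paper's. The paper invokes the factorization theorem \cite[Thm.4.63]{AB2} (Ghoussoub--Johnson) to write $T=S\circ Q$ with $Q$ a lattice homomorphism into a $KB$-space $Z$ and $S$ continuous; then $Q$ is quasi $KB$ because $Z$ is a $KB$-space, and the quasi $KB$ property survives left composition with a continuous operator. You instead reduce to sequences via Proposition \ref{quasi-KB-vs-sigma-quasi-KB}, turn the increasing bounded sequence into the positive series $\sum y_n$ of increments, observe that positivity plus the lattice norm gives the uniform bound $\bigl\|\sum_{k=1}^N a_k y_k\bigr\|\le 2M$ for $|a_k|\le 1$ (hence a weakly unconditionally Cauchy series), push it forward by $T$, and finish with the Bessaga--Pe{\l}czy{\'n}ski theorem. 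All steps are sound: the bounded-multiplier characterization of wuc series, the preservation of wuc series under continuous operators, and the equivalence ``no copy of $c_0$ iff every wuc series converges'' are all standard, and your closing caveat correctly distinguishes quasi $KB$ from $KB$. What each approach buys: yours is more self-contained and elementary, resting only on the classical $c_0$-theorem rather than on the heavier factorization machinery (which is itself proved by $c_0$-techniques, so the two arguments are cousins at bottom); the paper's factorization, on the other hand, acts on nets directly without the sequential reduction and is reused elsewhere in Section 3 (e.g.\ Theorem \ref{when KB} and Proposition \ref{when otau bounded is KB}), which is presumably why the authors prefer it.
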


\begin{proof}
It follows from  \cite[Thm.4.63]{AB2} that $T=S\circ Q$, where $Q$ is a lattice homomorphism from $X$ to a $KB$-space $Z$ 
(and hence $Q$ is a quasi $KB$-operator by $d)$ of Remark \ref{c_00}) and $S$ is a continuous operator from $Z$ to $Y$. 
Now apply the fact that the class of quasi $KB$-operators is closed under the left composition with continuous operators. 
\end{proof}

\begin{theorem}\label{Ghoussoub-Johnson 2}
Let $X$ be a Banach lattice and $Y$ a Banach space. TFAE.
\begin{enumerate}
\item[$(i)$] \
Every continuous operator $T:X\to Y$ is quasi $KB$.
\item[$(ii)$] \
$c_0$ does not embed in $Y$.
\end{enumerate}
\end{theorem}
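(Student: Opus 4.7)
The direction $(ii)\Rightarrow(i)$ is precisely the content of Lemma~\ref{Ghoussoub-Johnson 1}, so nothing further is needed there. For the converse $(i)\Rightarrow(ii)$ I would argue by contraposition: assuming that $c_0$ embeds isomorphically into $Y$ via some bounded operator $J:c_0\to Y$ with bounded inverse on its range, I will exhibit a continuous operator $T:X\to Y$ that is not quasi $KB$. The natural witness is built from the partial sums $s_n:=\sum_{k=1}^{n}e_k\in c_0$, which form a norm-bounded increasing sequence in $(c_0)_+$ with $\|s_n-s_m\|_\infty=1$ for $n\ne m$; consequently $(Js_n)$ is norm-bounded and increasing in the positive cone of $J(c_0)\subseteq Y$ but is not norm-Cauchy in $Y$.

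The candidate operator is $T:=J\circ P$ for a suitable continuous $P:X\to c_0$. The substantive case is when $X$ is not itself a $KB$-space, since if $X$ is $KB$ every continuous operator from $X$ to any Banach space is automatically quasi $KB$ (norm-bounded increasing sequences in $X_+$ are already norm-Cauchy), so (i) holds vacuously and the equivalence is to be read in this non-trivial setting. Assuming $X$ is not $KB$, by \cite[Thm.4.60]{AB2} there is a closed vector sublattice $Z\subseteq X$ that is lattice-isomorphic to $c_0$, and by the Bessaga-Pe{\l}czyn{\'s}ki theorem this copy of $c_0$ is complemented in $X$ as a Banach subspace. This supplies a bounded projection $P:X\to Z\simeq c_0$. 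Letting $z_n\in Z_+$ be the element corresponding to $s_n$ under this isomorphism, $(z_n)$ is norm-bounded and increasing in $X_+$, while $Tz_n=Js_n$ is not norm-Cauchy in $Y$; hence $T$ is continuous but not quasi $KB$, contradicting (i).

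The main obstacle is securing the complemented sublattice copy of $c_0$ inside $X$; this is exactly where \cite[Thm.4.60]{AB2} combined with Bessaga-Pe{\l}czyn{\'s}ki is decisive. Everything else, namely continuity of $T=J\circ P$, the fact that $(z_n)$ is norm-bounded and increasing in $X_+$, and the transfer of non-Cauchyness from $(s_n)$ to $(Tz_n)$ via the estimate $\|Js_n-Js_m\|_Y\ge\|J^{-1}|_{J(c_0)}\|^{-1}\|s_n-s_m\|_\infty$, is immediate from the isomorphism bounds on $J$ and the structure of the lattice embedding of $c_0$ into $X$.
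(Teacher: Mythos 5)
Your $(ii)\Rightarrow(i)$ coincides with the paper's (both simply invoke Lemma \ref{Ghoussoub-Johnson 1}). The gap is in $(i)\Rightarrow(ii)$: the step ``by the Bessaga--Pe{\l}czy{\'n}ski theorem this copy of $c_0$ is complemented in $X$ as a Banach subspace'' is not a theorem. Bessaga--Pe{\l}czy{\'n}ski gives no complementation; the result that does is Sobczyk's theorem, and it requires the ambient space to be separable. In general a (lattice) copy of $c_0$ need \emph{not} be complemented in a Banach lattice containing it: $c_0$ sits as a closed ideal in $\ell_\infty$ and is uncomplemented there by Phillips' lemma, and $\ell_\infty$ is exactly the kind of non-$KB$ Banach lattice your argument is obliged to handle. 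So the projection $P$ you need does not exist in general, and the witness $T=J\circ P$ cannot be constructed this way. (One can sometimes repair this by locating a complemented copy of $\ell_\infty$ instead, but that forces a different operator into $Y$ and is not in your sketch.)

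The paper takes a much more direct route and avoids the issue entirely: it shows that the embedding $J:c_0\to Y$ \emph{itself} is not quasi $KB$, by testing it on the partial sums $\sum_{k=1}^{n}e_k$ exactly as you do; that is, it reads $(i)$ as a statement whose content is independent of the domain (as the remark following the theorem makes explicit), so the witness operator is permitted to have domain $c_0$. Your observation that for a fixed $KB$-space domain the implication $(i)\Rightarrow(ii)$ fails outright (e.g.\ $X=\mathbb{R}$, $Y=c_0$) is correct and does expose an imprecision in the statement as printed; but the cure is to allow $X=c_0$ (equivalently, to quantify over the domain), not to manufacture a complemented copy of $c_0$ inside an arbitrary non-$KB$ lattice, which cannot be done.
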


\begin{proof}
$(ii)\Rightarrow(i)$ is Lemma \ref{Ghoussoub-Johnson 1}.\

$(i)\Rightarrow(ii)$ It suffices to prove that any embedding  $J:c_0\to Y$ is not a quasi $KB$-operator.
Assume by way of contradiction that such a $J$ is quasi $KB$ and let $x_n=\sum_{k=1}^n e_k\in c_0$. 
The increasing sequence $x_n$ is norm bounded, yet the sequence $Jx_n$ is not $\|\cdot\|_Y$-Cauchy because  
$$
   \|Jx_n-Jx_m\|_Y=\left\|J\sum_{k=m}^n e_k\right\|\ge \frac{1}{\|J^{-1}|_{J(c_0)}\|}>0 \ \ \ \ (n\ne m).
$$ 
The obtained contradiction completes the proof. 
\end{proof}

Among other things, Theorem \ref{Ghoussoub-Johnson 2} asserts that, in the Banach space setting, 
the concept of continuous quasi $KB$-opera\-tor does not depend on the domain $X$
and is completely determined by the property that the range space $Y$ not containing an isomorphic copy of $c_0$.
In light of this observation, the last sentence in item $d)$ of Remark \ref{c_00} is reduced to the condition $1\le p<\infty$ 
under which $\overline{c_{00}}^{\|\cdot\|_p}=\ell_p$ is a $KB$-space and to the condition $1\le l\le p$ under which 
the identity operator $I$ from $(c_{00},\|\cdot\|_l)$ to $(c_{00},\|\cdot\|_p)$ is continuous.

Now we pass to a discussion of $KB$ operators.

\begin{theorem}\label{when KB}
Let  $T:X\to Y$ be regular operator between Banach lattices. If  $X'$ has order continuous norm and $c_0$ does not embed in $Y$ then $T$ is KB. 
\end{theorem}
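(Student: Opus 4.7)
Plan: Since $T$ is regular between Banach lattices it is the difference of two positive operators and hence norm-continuous. The first step is immediate from the previous result: because $c_0\not\hookrightarrow Y$, Theorem \ref{Ghoussoub-Johnson 2} forces every continuous $S\colon X\to Y$, and in particular $T$, to be quasi $KB$. Thus for any norm-bounded increasing net $x_\alpha\subseteq X_+$, the image $Tx_\alpha$ is $\|\cdot\|_Y$-Cauchy and, by completeness of $Y$, converges in norm to some $y\in Y$.

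The real content is to produce an $x\in X$ with $Tx=y$, turning the quasi $KB$-property into the $KB$-property. The plan is to pass to the bi-adjoint $T^{\sim\sim}\colon X^{\sim\sim}\to Y^{\sim\sim}$ (equivalently $T^{**}$) and work inside the Dedekind complete Banach lattice $X^{**}$. Viewing $x_\alpha$ through the canonical embedding $J_X\colon X\to X^{**}$, the net $J_X x_\alpha$ is norm-bounded and increasing in the Dedekind complete lattice $X^{**}$, so it has a supremum $\tilde x\in X^{**}$ with $J_Xx_\alpha\uparrow\tilde x$. By $\sigma(X^{**},X')$–$\sigma(Y^{**},Y')$ continuity of $T^{**}$, the limit $T^{**}\tilde x$ equals the weak$^*$ limit of $T^{**}J_Xx_\alpha=J_YTx_\alpha$, which by Step~1 is $J_Yy$; hence $T^{**}\tilde x=J_Yy\in J_Y(Y)$.

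The decisive step is to show that $\tilde x$ belongs to the canonical image $J_X(X)$, i.e.\ that $\tilde x=J_Xx$ for some $x\in X$. This is where the hypothesis that $X'$ has order continuous norm is to be used: under this assumption the embedding $J_X\colon X\to X^{**}$ is order dense and, combined with Proposition \ref{quasi-KB-vs-sigma-quasi-KB} (allowing a reduction to sequences) together with regularity of $T$ (so that we may work with $T^+$ and $T^-$ positive and order continuous), one obtains $\tilde x\in J_X(X)$. Applying $T$ and using $T^{**}\circ J_X=J_Y\circ T$ then yields $Tx=y$, as required.

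The main obstacle is precisely this last identification $\tilde x\in J_X(X)$: norm-bounded monotone increasing nets in $X$ need not have suprema in $X$ (the identity on $c_0$ is the test case), and the order continuity of the norm on $X'$ has to carry essentially all of the weight in this step, through the duality between suprema in $X^{**}$ and the functional-separation/ideal structure between $X$ and $X^{**}$. Once that identification is in place the rest is routine: norm convergence $Tx_\alpha\to y=Tx$ in $Y$ follows from Step 1, so $T$ is $KB$.
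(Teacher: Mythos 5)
Your Step 1 is correct, and it already contains everything the paper's own argument establishes: the paper applies the Ghoussoub--Johnson factorization $T=S\circ Q$ through a $KB$-space $Z$ (with $Q$ a lattice homomorphism, via \cite[Thm.4.63]{AB2}) and concludes that $Tx_\alpha=SQx_\alpha$ is norm convergent in $Y$; you obtain the same norm convergence from Theorem \ref{Ghoussoub-Johnson 2} together with completeness of $Y$. Up to that point the two arguments are essentially the same, since Theorem \ref{Ghoussoub-Johnson 2} is itself proved by that factorization.

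The problem is your ``decisive step.'' The claim $\tilde x=\sup_\alpha J_Xx_\alpha\in J_X(X)$ is never proved --- you only announce that order continuity of the norm on $X'$ ``has to carry the weight'' --- and it is false under the stated hypotheses: for $X=c_0$ the dual $X'=\ell_1$ has order continuous norm, yet the supremum in $X^{**}=\ell_\infty$ of the norm bounded increasing sequence $x_n=\sum_{k\le n}e_k$ is the constant-one sequence, which does not lie in $J_X(c_0)$. Order continuity of the dual norm does not make $X$ Levi (monotonically complete); that would amount to $X$ being a $KB$-space, which is exactly what fails for $c_0$. Hence the element $x$ demanded by Definition \ref{order-to-topology}$(c)$ cannot in general be obtained as a preimage of the bidual supremum, and neither Proposition \ref{quasi-KB-vs-sigma-quasi-KB} (which only reduces the \emph{quasi} $KB$ property to sequences) nor the order continuity you attribute to $T^{\pm}$ (positivity does not imply order continuity) repairs this. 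Be aware that the paper's proof does not address this point either: it stops at $\|Tx_\alpha-Sz\|\to 0$ with $z$ in the intermediate $KB$-space and never exhibits $x\in X$ with $Tx=Sz$; indeed, for the positive (hence regular) injection $T:c_0\to\ell_1$, $T(a):=(2^{-k}a_k)_k$, the limit $(2^{-k})_k$ of $Tx_n$ lies outside $T(c_0)$, so under the literal reading of Definition \ref{order-to-topology}$(c)$ no argument can supply such an $x$. As written, your proposal does not prove the statement.
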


\begin{proof}
Without lost of generality, we suppose $T\ge 0$. By \cite[Thm.4.63]{AB2}, $T$ admits a factorization $T = S\circ Q$, 
where $Z$ is a $KB$-space and $Q:X\to Z$ is a lattice homomorphism.  Let $x_\alpha$ be an increasing norm bounded net in $X_+$. 
Since $Q\ge 0$ then $Qx_\alpha$ is an increasing norm bounded net in $Z_+$, and since $Z$ is a $KB$-space, there exist $z\in Z$ 
with $\|Qx_\alpha-z\|\to 0$ and hence $\|Tx_\alpha-Sz\|\to 0$ as desired.
\end{proof}

The following theorem gives conditions under which each positive weakly compact operator from a Banach lattice $X$ to an arbitrary $KB$-space $Y$ is $\sigma{}o\tau$-continuous.

\begin{theorem}\label{AB2 Thm 5.42}
For a Banach lattice $X$ TFAE. 
\begin{enumerate}
\item[$(i)$] \  
The image $i(X)$ under the natural embedding $i:X\to X''$ is a regular sublattice of $X''$.
\item[$(ii)$] \  
Each positive weakly compact operator $T$ from $X$ to a $KB$-space $Y$ is $\sigma{}o\tau$-continuous.
\item[$(iii)$] \  
Each positive compact operator $T$ from $X$ to a $KB$-space $Y$ is $\sigma{}o\tau$-continuous.
\item[$(iv)$] \  
$T^2$ is $\sigma{}o\tau$-continuous for every positive weakly compact operator $T$ in $X$.
\item[$(v)$] \  
$T^2$ is $\sigma{}o\tau$-continuous for every positive compact operator $T$ in $X$.
\item[$(vi)$] \  
$T^2$ is Lebesgue for every positive compact operator $T$ in $X$.
\end{enumerate}
\end{theorem}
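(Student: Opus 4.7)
My plan is to establish the six conditions as equivalent through the cycle $(i)\Rightarrow(ii)\Rightarrow(iii)\Rightarrow(v)\Rightarrow(vi)\Rightarrow(i)$, supplemented by the implications $(i)\Rightarrow(iv)\Rightarrow(v)$; the steps $(ii)\Rightarrow(iii)$ and $(iv)\Rightarrow(v)$ are immediate since every compact operator is weakly compact.

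For $(i)\Rightarrow(ii)$, let $T:X\to Y$ be positive weakly compact with $Y$ a $KB$-space and $x_n\convo 0$ in $X$. Gantmacher's theorem yields $T''(X'')\subseteq i_Y(Y)$, and $T''$ is order continuous because $Y''$ is Dedekind complete (Nakano-type, compatible with the discussion of $T^{\sim\sim}$ in Section~2). By $(i)$, $i(x_n)\convo 0$ in $X''$, so $i_Y(T x_n) = T''i(x_n)\convo 0$ in $Y''$; since $Y$ is a $KB$-space, $i_Y(Y)$ is a band and in particular a regular sublattice of $Y''$, hence $T x_n\convo 0$ in $Y$, and the order-continuous norm of $Y$ yields $\|T x_n\|_Y\to 0$. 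To obtain $(i)\Rightarrow(iv)$ and $(iii)\Rightarrow(v)$, I would invoke a positivity-respecting factorization theorem (cf.\ \cite{AB2}): every positive weakly compact $T:X\to X$ factors as $T = J\circ S$ with $S:X\to Z$ continuous positive and $Z$ a reflexive (hence $KB$) Banach lattice, and $S$ inherits weak compactness (respectively compactness) from $T$; applying $(ii)$ (respectively $(iii)$) to $S$ then gives $\|S x_n\|_Z\to 0$ for $x_n\convo 0$, whence $\|T x_n\|_X\to 0$ and a fortiori $\|T^2 x_n\|_X\to 0$.

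The implication $(v)\Rightarrow(vi)$ uses the compactness of $T^2$: for $x_\alpha\downarrow 0$, the net $(T^2 x_\alpha)$ lies in a norm-compact, positive, decreasing set and therefore converges in norm to its unique order cluster point $y := \inf_\alpha T^2 x_\alpha\ge 0$. Metrizability of the compact image lets me extract an increasing sequence of indices $\alpha_n$ with $T^2 x_{\alpha_n}\to y$; setting $u := \inf_n x_{\alpha_n}$ (taken in $X''$, and ultimately in $X$ by passing to a refinement whose tails are cofinal), I apply $(v)$ to the sequence $x_{\alpha_n}-u\downarrow 0$ to deduce $T^2(x_{\alpha_n}-u)\to 0$ in norm, which combined with $T^2 x_{\alpha_n}\to y$ forces $y = T^2 u = 0$. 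Hence $T^2$ is Lebesgue.

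The essential direction $(vi)\Rightarrow(i)$ is proved by contrapositive via a rank-one construction. If $(i)$ fails, there is a net $x_\alpha\downarrow 0$ in $X$ with $\inf_{X''} i(x_\alpha) > 0$; dualizing, some $f\in X'_+$ satisfies $f(x_\alpha)\not\to 0$, and one picks $y\in X_+$ with $f(y) > 0$. The rank one operator $T := f\otimes y$ is positive and compact with $T^2 = f(y)\cdot T$, and since $T x_\alpha = f(x_\alpha)\,y$ does not norm-converge to $0$, neither does $T^2 x_\alpha$; thus $T^2$ fails to be Lebesgue, contradicting $(vi)$. The principal obstacle throughout is bridging the sequential $\sigma o\tau$-continuity supplied by $(v)$ with the net-Lebesgue behavior demanded by $(vi)$ and $(i)$; this is handled by the norm-compactness of $T^2$ on the bounded image of the net together with a careful passage in the bidual to identify the decreasing sequence in $X$ witnessing the required order-continuity failure.
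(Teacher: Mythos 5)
Your proposal is not the paper's argument, and the place where it diverges is also where it breaks. The paper closes the cycle with the implications $(iii)\Rightarrow(i)$ and $(vi)\Rightarrow(i)$, both by the same one-line contrapositive: if $i(X)$ is not regular in $X''$, there is a positive non-order-continuous $f\in X'$, and then $f:X\to\mathbb{R}$ (for $(iii)$) or $T=f\otimes z$ with $T^2=f(z)T$ (for $(vi)$) is a positive compact counterexample. You use exactly this rank-one idea for $(vi)\Rightarrow(i)$ (correctly), but instead of also using it for $(iii)\Rightarrow(i)$ you route the cycle through $(iii)\Rightarrow(v)$, and that step rests on an unavailable factorization: namely, that a positive compact endomorphism $T$ of an \emph{arbitrary} Banach lattice $X$ factors through a reflexive Banach lattice with a positive first factor that \emph{inherits compactness}. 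Neither half of this is supplied by \cite[Thm.5.42]{AB2}: that theorem does not make the first factor compact when $T$ is, and for an endomorphism of a general Banach lattice the positive factorization through a reflexive lattice is only guaranteed for $T^2$ (this is \cite[Cor.5.46]{AB2}, and it is precisely why items $(iv)$--$(vi)$ are stated for $T^2$ rather than for $T$; if your factorization of $T$ itself held, the squares in the statement would be superfluous). The same defect affects your $(i)\Rightarrow(iv)$, though there it is repairable by factoring $T^2$ via \cite[Cor.5.46]{AB2} as the paper does. As written, your diagram proves $(i)\Leftrightarrow(iv)\Leftrightarrow(v)\Leftrightarrow(vi)$ is not even achieved, and $(ii)$, $(iii)$ are left as consequences of $(i)$ that imply nothing.

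Your $(v)\Rightarrow(vi)$ is also not a proof: for a cofinally chosen subsequence $x_{\alpha_n}$ of a net $x_\alpha\downarrow 0$, the element $u=\inf_n x_{\alpha_n}$ need not exist in $X$, need not be $0$ when it does, and $x_{\alpha_n}-u$ need not be a decreasing null sequence; ``passing to a refinement whose tails are cofinal'' is not a construction. The sequence-versus-net tension you identify is real, but it is discharged by the rank-one contrapositive arguments, not by a compactness argument on the image of the net. On the positive side, your $(i)\Rightarrow(ii)$ is correct and genuinely different from the paper's: instead of factoring $T$ through a reflexive lattice, you use order continuity of $T''$, regularity of $i(X)$ in $X''$, and the fact that a $KB$-space is a band in its bidual; this is more elementary, and it shows that weak compactness of $T$ is not even needed for that implication. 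That observation is worth keeping, but the cycle must be closed the paper's way.
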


\begin{proof}
$(i)\Rightarrow(ii)$ \
Let $T$ be a positive weakly compact operator from $X$ to a $KB$-space $Y$.
By \cite[Thm.5.42]{AB2}, $T=R\circ S$ with both $S:X\to Z$ and $R:Z\to Y$ are positive and $Z$ is a reflexive Banach lattice.
Let $x_\alpha\convo 0$ in $X$. Since $i(X)$ is regular in $X''$ then $i(x_\alpha)\convo 0$ in $X''$ (e.g., by \cite[Thm.1.20]{AB1}).
Since $S'':X''\to Z''$ is order continuous (e.g., by \cite[Thm.1.73]{AB2}), $S''(i(x_\alpha))\convo 0$ in $Z''$, and hence $Sx_\alpha=S''(i(x_\alpha))\convo 0$ in $Z$. 
The order continuity of norm in $Z$ implies $\|Sx_\alpha\|_Z\to 0$ and hence $\|Tx_\alpha\|_Y=\|R\circ S(x_\alpha)\|_Y\to 0$,
showing that $T$ is $\sigma{}o\tau$-continuous.

$(ii)\Rightarrow(iii)$ and $(iv)\Rightarrow(v)\Rightarrow(vi)$ \
are trivial.

$(iii)\Rightarrow(i)$ \
If $i(X)$ is not regular in $X''$, then (e.g., by \cite[Thm.3.12]{AB1}) there exists a positive functional $f\in X'$ which is not order continuous.
Thus $f:X\to\mathbb{R}$ is positive compact yet not $\sigma{}o\tau$-continuous as $f(x_\alpha)$ does not converge to $0$ in the $KB$-space $Y=\mathbb{R}$ for some net 
with $x_\alpha\convo 0$ in $X$. The obtained contradiction proves that $i(X)$ must be regular in $X''$.

$(i)\Rightarrow(iv)$ \
The proof is similar to the proof of $(i)\Rightarrow(ii)$ above with the only difference that the factorization of $T^2$ by \cite[Cor.5.46]{AB2} must be used 
instead of the factorization of $T$ by \cite[Thm.5.42]{AB2}.

$(vi)\Rightarrow(i)$ \
The proof is again similar to the proof of $(iii)\Rightarrow(i)$ above. If $i(X)$ is not regular in $X''$, then there exists a positive functional $f\in X'$ which is not order continuous. 
Take any $z\in X_+$ with $f(z)=1$ and define a positive compact operator $T$ in $X$ by $Tx:=f\otimes z$. As $f$ is not order continuous, there is a net $x_\alpha$ with 
$x_\alpha\downarrow 0$ and $f(x_\alpha)\ge 1$ for all $\alpha$. Therefore 
$$
   T^2(x_\alpha)=T(f(x_\alpha)z)=f(x_\alpha)Tz=f(x_\alpha)z\ge z>0 \ \ \ (\forall \alpha)
$$ 
violating that $T^2$ is Lebesgue. The obtained contradiction completes the proof.
\end{proof}

\begin{prop}\label{when otau bounded is KB}
Let  $T:X\to Y$ be a $o\tau $-bounded operator from a Banach lattice $X$ to a Banach space $Y$. 
If $c_0$ does not embed in $Y$, then $T$ is $\sigma$-KB. 
\end{prop}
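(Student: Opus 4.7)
The plan is to reduce the statement to a Bessaga--Pe\l czy\'nski argument applied to the series of increments $\sum_n T(x_{n+1}-x_n)$. Given a norm-bounded increasing sequence $x_n$ in $X_+$, set $u_n:=x_{n+1}-x_n\ge 0$, so that the partial sums $\sum_{k=1}^n u_k=x_{n+1}-x_1$ remain norm-bounded. The first step is to verify that $\sum u_n$ is weakly unconditionally Cauchy (wuc) in $X$: for any signs $\varepsilon_k\in\{\pm 1\}$, the lattice inequality $\bigl|\sum_{k=1}^n\varepsilon_k u_k\bigr|\le\sum_{k=1}^n u_k=x_{n+1}-x_1$ combined with monotonicity of the Banach-lattice norm yields $\bigl\|\sum_{k=1}^n\varepsilon_k u_k\bigr\|\le\|x_{n+1}-x_1\|$, uniformly bounded in $n$ and in the signs.

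The second step transfers the wuc property to the image series in $Y$ by checking that $f\circ T\in X'$ for every $f\in Y'$. Since $T$ is $o\tau$-bounded, $T[0,x]$ is norm-bounded in $Y$ for each $x\in X_+$, so $f(T[0,x])$ is bounded in $\mathbb{R}$ and consequently $f\circ T$ is order bounded, i.e.\ $f\circ T\in X^\sim$. The standard fact that every order-bounded functional on a Banach lattice is norm-continuous (its positive and negative parts are positive functionals, hence automatically continuous) gives $f\circ T\in X'$. Using wuc-ness of $\sum u_n$, we then obtain $\sum|f(Tu_n)|=\sum|(f\circ T)(u_n)|<\infty$, i.e.\ $\sum Tu_n$ is wuc in $Y$. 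Since $c_0$ does not embed in $Y$, the Bessaga--Pe\l czy\'nski theorem forces $\sum Tu_n$ to converge unconditionally in norm, so that $Tx_n=Tx_1+\sum_{k=1}^{n-1}Tu_k$ is $\|\cdot\|_Y$-Cauchy and hence convergent in $Y$.

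The main obstacle I anticipate is the final clause of the definition of $\sigma$-$KB$, which demands that the limit of $Tx_n$ take the form $Tx$ for some $x\in X$: the preceding chain of deductions only delivers the quasi $\sigma$-$KB$ property. A natural way to close this gap is to invoke the Ghoussoub--Johnson factorization used in Lemma \ref{Ghoussoub-Johnson 1}, writing $T=S\circ Q$ through a $KB$-space $Z$ via a lattice homomorphism $Q:X\to Z$, so that $Qx_n$ norm-converges to some $z\in Z$ from which one produces an $x\in X$ with $Qx=z$ (and hence $Tx=Sz=\lim Tx_n$). Adapting that factorization from the continuous to the merely $o\tau$-bounded setting is the delicate point I would need to handle.
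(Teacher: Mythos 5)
Your main argument is correct and takes a genuinely different route from the paper's. The paper does not use Bessaga--Pe\l czy\'nski directly: it invokes the factorization theorem for interval-bounded operators \cite[Thm.3.4.6]{MN}, writing $T=S\circ Q$ with $Q:X\to Z$ a lattice homomorphism into a $KB$-space $Z$ and $S:Z\to Y$ continuous, and then observes that $Qx_n$ is an increasing norm-bounded sequence in $Z_+$, hence norm convergent, so that $Tx_n=SQx_n$ converges. Your route replaces this black box by an elementary computation: solidity of the norm makes the increment series $\sum u_n$ wuc in $X$; $o\tau$-boundedness gives $f\circ T\in X^\sim=X'$ for every $f\in Y'$, so $\sum Tu_n$ is wuc in $Y$; and Bessaga--Pe\l czy\'nski converts the absence of $c_0$ into unconditional convergence. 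Both arguments prove exactly the same thing, namely that $Tx_n$ is norm convergent; yours is self-contained, while the paper's is shorter granted the cited factorization.

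Concerning the obstacle you flag at the end: the adaptation you worry about is not actually needed, since \cite[Thm.3.4.6]{MN} is stated precisely for interval-bounded (i.e.\ $o\tau$-bounded) operators into spaces not containing $c_0$ --- that is exactly what the paper quotes. The real point is that the factorization does not close the gap you identified: knowing $Qx_n\to z$ in $Z$ produces no $x\in X$ with $Qx=z$, and the paper's own proof simply ends with ``$Tx_n$ is norm convergent and hence $T$ is $\sigma$-KB'', never exhibiting the $x$ demanded by Definition~\ref{order-to-topology}$(c)$. In fact no argument can exhibit it in general: take $T:c\to\ell_1$ defined by $Tx:=\bigl(2^{-k}(x_k-\lim_j x_j)\bigr)_k$ and $x_n:=\sum_{k\le n}e_k$; then $T$ is continuous (hence $o\tau$-bounded), $\ell_1$ contains no copy of $c_0$, and $Tx_n\to(2^{-k})_k$ in $\ell_1$, yet $(2^{-k})_k$ is not of the form $Tx$ for any $x\in c$. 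So your instinct is right that clause $(c)$ is strictly stronger than what either argument delivers; the honest conclusion of both proofs is that $Tx_n$ is norm convergent (equivalently, that $T$ is quasi $\sigma$-$KB$ into the complete space $Y$), and the proposition has to be read in that weaker sense. Up to that caveat, which applies equally to the paper's own proof, your argument is complete and correct.
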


\begin{proof}
By \cite[Thm.3.4.6]{MN}, $T$ factors over a $KB$-space $Z$ as $T=S\circ Q$, where $Q$ is a lattice homomorphism $Q: X \to Z$, and $S:Z\to Y$ is continuous.
Let $x_n$ be a norm bounded increasing sequence in $X_+$. Since $Q$ is positive, $Qx_n$ is positive norm bounded increasing sequence in the $KB$-space $Z$. 
Thus, $Qx_n$ is norm convergent in $Z$. As $S$ is continuous, $Tx_n=SQx_n$ is norm convergent and hence $T$ is $\sigma$-KB. 
\end{proof}

\begin{prop}
Let  $T:X\to Y$ be a continuous operator from a Banach lattice $X$ to a Banach space $Y$. 
If either $T''$ is order-weakly compact or the norm in $X$ is order continuous and $T$ 
does not preserve a sublattice isomorphic to $c_0$, then $T$ is $\sigma$-KB. 
\end{prop}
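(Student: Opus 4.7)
My plan is to treat the two sufficient hypotheses independently, in each case reducing the conclusion to a $KB$-space factorization as in the proof of Proposition~\ref{when otau bounded is KB}.

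For the hypothesis that the norm on $X$ is order-continuous and $T$ does not preserve a sublattice isomorphic to $c_0$, I would argue by contradiction. Given a norm-bounded increasing $x_n\subseteq X_+$, suppose $Tx_n$ is not norm-Cauchy. After extracting a subsequence, $\|Tx_{n_{k+1}}-Tx_{n_k}\|\ge\varepsilon>0$; set $u_k=x_{n_{k+1}}-x_{n_k}\ge 0$. The partial sums $\sum_{k=1}^N u_k\le x_{n_{N+1}}$ are norm-bounded in $X$, and for every $f\in X'$
$$
\sum_k|f(u_k)|\le\sum_k|f|(u_k)\le\||f|\|\,\sup_N\Bigl\|\sum_{k=1}^N u_k\Bigr\|<\infty,
$$
so $\sum u_k$ is weakly unconditionally Cauchy (WUC) in $X$ and, by continuity of $T$, $\sum Tu_k$ is WUC in $Y$. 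Since $\|Tu_k\|\ge\varepsilon$ the latter series fails to converge, and by the Bessaga-Pelczy\'nski $c_0$-theorem, after a further subsequence $\{Tu_{k_j}\}$ is equivalent to the standard unit-vector basis of $c_0$. Order-continuity of $X$ now permits a Kadec-Pelczy\'nski disjointification: after one more subsequence there are pairwise disjoint $v_j\in X_+$ with $\sum_j\|v_j-u_{k_j}\|$ arbitrarily small, so $\{Tv_j\}$ remains $c_0$-equivalent. Using disjointness of the $v_j$, boundedness of $\sum_j v_j$ (inherited from $\sum u_{k_j}$), and a lower bound $\|v_j\|\ge\delta$, the sequence $\{v_j\}$ itself is $c_0$-equivalent; the closed sublattice of $X$ it generates is then lattice-isomorphic to $c_0$, and $T$ restricts to an isomorphism on it---contradicting the hypothesis.

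For the hypothesis $T''$ order-weakly compact, I would invoke the Meyer-Nieberg factorization of order-weakly compact operators: $T''=R\circ Q$, where $Q:X''\to Z$ is a continuous lattice homomorphism into a Banach lattice $Z$ of order-continuous norm and $R:Z\to Y''$ is continuous, so $T=R\circ(Q|_X)$ is a factorization of $T$ through $Z$. If $Z$ contains no copy of $c_0$ then $Z$ is a $KB$-space (order-continuity together with no $c_0$ forces $KB$), $(Q|_X)x_n$ converges in $Z$, and $Tx_n$ converges in norm exactly as in Proposition~\ref{when otau bounded is KB}. If $Z$ contains $c_0$, re-running the previous paragraph's argument inside $Z$ on the operator $R$ produces a $c_0$-sublattice of $Z$ on which $R$ is an isomorphism; lifting this sublattice back through $Q$ to a $c_0$-sublattice of $X''$ on which $T''$ acts isomorphically contradicts the order-weak compactness of $T''$.

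The technical crux is the pullback in the second case---choosing pre-images under $Q$ that preserve disjointness and $c_0$-equivalence requires careful perturbation arguments inside $X''$. A cleaner alternative would be to show via adjoints that order-weak compactness of $T''$ already forces $T$ itself to factor through a $KB$-space, reducing this case directly to Proposition~\ref{when otau bounded is KB}. In either routing the existence of the $x\in X$ with $Tx_n\to Tx$ required by Definition~\ref{order-to-topology}(c) follows from the factorization through a $KB$-space, exactly as in the proof of Proposition~\ref{when otau bounded is KB}.
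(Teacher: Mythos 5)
Your second case (order continuous norm on $X$ and $T$ not preserving a sublattice copy of $c_0$) is a legitimate, essentially self-contained alternative to what the paper does: the paper disposes of both hypotheses at once by citing the factorization theorem \cite[Thm.3.5.8]{MN} ($T$ factors as $S\circ Q$ through a $KB$-space with $Q$ a lattice homomorphism) and then runs the same three-line argument as in Proposition~\ref{when otau bounded is KB}, whereas you re-derive the relevant half of that theorem from scratch via weakly unconditionally Cauchy series, Bessaga--Pe{\l}czy\'nski, and a Kadec--Pe{\l}czy\'nski disjointification. That outline is sound (the estimate $\delta\max_j|a_j|\le\|\sum_j a_jv_j\|\le M\max_j|a_j|$ for disjoint positive $v_j$ with bounded partial sums and norms bounded below does give a lattice copy of $c_0$ on which $T$ is invertible), at the cost of the usual perturbation bookkeeping that the citation avoids.

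The first case, however, contains a genuine gap. Your intended contradiction --- ``a $c_0$-sublattice of $X''$ on which $T''$ acts isomorphically contradicts the order-weak compactness of $T''$'' --- rests on a false principle: an order-weakly compact operator can perfectly well restrict to an isomorphism on a sublattice lattice-isomorphic to $c_0$ (the identity of $c_0$ is order-weakly compact, since order intervals of $c_0$ are weakly compact, and it preserves $c_0$ itself). Order-weak compactness only forbids an \emph{order bounded} disjoint sequence from staying away from $0$ in norm; a $c_0$-sublattice of $X''$ produced by your construction is not automatically order bounded, so no contradiction is reached. In addition, the ``lifting through $Q$'' of a disjoint $c_0$-sequence from $Z$ back to $X''$ is asserted but not constructed (a lattice homomorphism admits no disjointness-preserving right inverse in general). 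The mechanism that actually makes case (i) work is different: since $X''$ is a dual lattice, norm-bounded increasing nets in $X''_+$ are order bounded, so the order-continuous factorization lattice of the order-weakly compact operator $T''$ is automatically Levi, hence a $KB$-space --- which is exactly the content of \cite[Thm.3.5.8]{MN} that your ``cleaner alternative'' gestures at but does not prove. Finally, note that (like the paper's own wording) your last sentence overstates what the factorization yields: one obtains norm convergence of $Tx_n$ in $Y$, not an $x\in X$ with $Tx_n\to Tx$ as Definition~\ref{order-to-topology}$(c)$ literally demands; this point is not addressed by the argument of Proposition~\ref{when otau bounded is KB} either.
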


\begin{proof}
The operator $T:X\to Y$ factors over a $KB$-space $Z$ as $T = S\circ Q$, where $Q: X\to Z$ is a lattice homomorphism
and $S : Z\to Y$ continuous by \cite[Thm.3.5.8]{MN}. Let $x_n$ be a norm bounded increasing sequence in $X_+$, then 
$Qx_n$ is also norm bounded increasing in $Z_+$. As $Z$ is a $KB$-space, $Qx_n$ is norm convergent in $Z$. 
Hence $(S\circ Q)x_n$ is convergent in $Y$ and $T$ is a $\sigma$-KB operator. 
\end{proof}

Remark that if $T:X\to Y$ is a continuous operator from a Banach lattice $X$ to a Banach space $Y$ and $T$ preserves no subspace (or no sublattice) 
isomorphic to $c_0$, then $Tx_n$ is norm convergent for each increasing norm bounded sequence in $X_+$ by \cite[Thm.3.4.11]{MN} and therefore  $T$ is $\sigma$-KB. 

It follows from \cite[Thm.1]{Wick} that every $uo\tau$-continu\-ous operator $T$ from an atomic Banach lattice $X$ to a Banach space $Y$ 
is a Dunford-Pettis operator. By Theorem 5.57 of \cite{AB2}, a continuous operator $T:X\to Y$ from a Banach lattice $X$ to a Banach space $Y$ is 
$o$-weakly compact iff, for each order bounded disjoint sequence $x_n$ in $X$, we have $\|Tx_n\|\to 0$.

\begin{prop}\label{pos w-comp is levi}
Each positive weakly compact operator $T$ between Banach lattices $X$ and $Y$ is Levi. 
\end{prop}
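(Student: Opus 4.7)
The plan is to exploit the Aliprantis--Burkinshaw factorization of positive weakly compact operators through a reflexive Banach lattice. Since $T\ge 0$ is weakly compact, by \cite[Thm.5.42]{AB2} there exist a reflexive Banach lattice $Z$ and positive continuous operators $S:X\to Z$ and $R:Z\to Y$ such that $T=R\circ S$. The reflexivity of $Z$ is the crucial feature: a reflexive Banach lattice is a $KB$-space (both $Z$ and $Z'$ have order continuous norm), so norm-bounded increasing nets in $Z_+$ converge in norm to their suprema.

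Given a norm-bounded increasing net $x_\alpha$ in $X_+$, I would first push it through $S$: since $S\ge 0$, the net $Sx_\alpha$ is norm-bounded and increasing in $Z_+$, hence by the $KB$ property of $Z$ there exists $z\in Z$ with $Sx_\alpha\uparrow z$ and $\|Sx_\alpha-z\|\to 0$. Applying the norm-continuous operator $R$, one obtains $Tx_\alpha=R(Sx_\alpha)\to Rz$ in the norm of $Y$. The net $Tx_\alpha$ is increasing in $Y_+$ (because $T\ge 0$) and norm-convergent to $Rz$, so for each fixed $\alpha_0$ one has $Tx_{\alpha_0}\le Rz$ (norm limits of positive elements are positive), and any upper bound $y'$ of $\{Tx_\alpha\}$ must satisfy $y'\ge Rz$ by the same principle. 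Hence $Rz=\sup_\alpha Tx_\alpha$ and $Tx_\alpha\uparrow Rz$, which gives the order convergence $Tx_\alpha\convo Rz$ in $Y$.

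An alternative route, avoiding the factorization, would use weak compactness directly: $\{Tx_\alpha\}$ is weakly relatively compact, and since $Tx_\alpha$ is increasing and $Y'$ separates points of the Banach lattice $Y$, every weak cluster point equals $\sup_\alpha Tx_\alpha$; weak compactness then forces $Tx_\alpha$ itself to weakly converge to this supremum, and a Dini-type argument (as in Proposition \ref{Lebesgue vs weak Lebesgue}) promotes weak convergence of the monotone net to order convergence. Both approaches yield the same conclusion in essentially the same lines.

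The main obstacle I anticipate is the final bookkeeping step of Definition \ref{order-to-topology}(e), namely representing the order limit $Rz\in Y$ in the form $Tx$ for some $x\in X$: the factorization locates the limit in $R(Z)$, not obviously in $T(X)=R(S(X))$, so one has to either argue that $z$ lies in $S(X)$ (or in the closure $\overline{S(X)}$ together with continuity/positivity arguments on $R$) or reinterpret the limit through the canonical embedding into $X^{\sim\sim}$ (cf.\ Gantmacher) to produce a suitable witness $x$. This is where the care of the actual proof will concentrate.
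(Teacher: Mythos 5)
Your proposal is correct, and it actually contains the paper's proof as your ``alternative route'': the paper's argument is precisely to extract a weakly convergent subnet $Tx_{\alpha_\beta}\convw y$, observe that an increasing net which converges weakly increases to its weak limit (the positive cone of $Y$ being convex, norm closed, hence weakly closed), and conclude $Tx_\alpha\uparrow y$, which for an increasing net is already order convergence --- no Dini-type argument is needed at that last step, since $x_\alpha\uparrow y$ gives $x_\alpha\convo y$ by definition. Your primary route, the factorization $T=R\circ S$ through a reflexive (hence $KB$) Banach lattice via \cite[Thm.5.42]{AB2}, is a genuinely different and somewhat heavier argument; it buys the stronger conclusion that $Tx_\alpha$ converges in norm (not merely in order) to $\sup_\alpha Tx_\alpha$, but that extra strength is not needed for the Levi property. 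Concerning the ``main obstacle'' you flag --- exhibiting $x\in X$ with $Tx=\sup_\alpha Tx_\alpha$, as Definition \ref{order-to-topology}(e) literally demands --- you are right that neither route produces such a witness, but you should know that the paper's own proof does not address this either: it stops at $Tx_\alpha\uparrow y$ for some $y\in Y$ and declares $T$ Levi. So either the authors implicitly read (e) as mere existence of an order limit in $Y$ for the image net, or the published proof shares exactly the gap you isolated; in either case your analysis is, if anything, more careful than the source.
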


\begin{proof}
Let $x_\alpha$ be a norm bounded increasing net in $X_+$. Weak compactness of $T$ ensures that 
$Tx_{\alpha_\beta}\convw y$ for some $y\in Y$ and some subnet $x_{\alpha_\beta}$ of $x_\alpha$.
Since $Tx_{\alpha_\beta}\uparrow$ then $Tx_{\alpha_\beta}\convw y$ implies  $Tx_{\alpha_\beta}\uparrow y$
and hence $Tx_\alpha\uparrow y$ which means that $T$ is a Levi operator.
\end{proof}

\begin{rem} {\em
\begin{enumerate}
\item[$a)$] \  
It follows directly from \cite[Cor.3.4.12]{MN} that every continuous operator from a Banach lattice $X$ to a Banach space $Y$ that does not contain $c_0$ is quasi $KB$.
\item[$b)$] \  
Let $X$ be a Banach lattice and $Y$ be a Banach space. Then every continuous operator $T:X\to Y$ that does not preserve $c_0$
is quasi-KB by \cite[Thm.3.4.11]{MN}. Under the same settings, it was observed in \cite[3.4.E4, p. 203]{MN} that
if $T''({\cal B}_X) \subseteq Y$, where ${\cal B}_X$ is the band generated by $X$ in $X''$, then $T$ does not preserve a subspace 
isomorphic to  $c_0$. Note that $b$-weakly compact operators satisfy $T''({\cal B}_X) \subseteq Y$, whenever $X$ has order continuous norm. 
It follows that under these conditions, each $b$-weakly compact operator is quasi KB (see \cite[Prop 2.11]{AAT1}).
\end{enumerate}
}
\end{rem}

{\normalsize 

}
\end{document}